\numberwithin{equation}{section} %%
\newtheorem{lem}{Lemma}
\newtheorem{thm}[lem]{Theorem}
\newtheorem{cor}[lem]{Corollary}
\newtheorem{prop}[lem]{Proposition}
\newtheorem{example}[lem]{Example}
\theoremstyle{definition}
\newtheorem{defn}{Definition}
\theoremstyle{remark}
\newtheorem{rem}{Remark}
\newcommand{\swedge}{{\scriptstyle\wedge}}
\newcommand{\nwedge}{\mathchoice{{\textstyle\wedge}}%
    {{\wedge}}%
    {{\textstyle\wedge}}%
    {{\scriptstyle\wedge}}}
\newsavebox{\spacebox}
\title[On   Lagrangian-Grassmannian Variety]{On   Lagrangian-Grassmannian Variety}
\author[J. Carrillo--Pacheco]{Jes\'us Carrillo--Pacheco}
\address[J. Carrillo--Pacheco]{Academia de Matem\'aticas, Universidad Aut\'onoma de la Ciudad de M\'exico, 09390 M\'exico, Ciudad de M\'exico.}
\email[J. Carrillo--Pacheco]{jesus.carrillo@uacm.edu.mx}
\begin{document}

\keywords{Lagrangian-Grassmannian; contraction map;  incidence matrices; Isotropic Grassmannians; radical ideal.}

\subjclass[2010]{15A11, 15B11, 51A11, 51D11.}

\begin{abstract}
In this paper it is shown that Family of Linear Relations of Contraction ($FRLC$) are the only ones, up to linear combination, that vanish the Lagrangian-Grassmannian. It is shown that the Pl\"ucker matrix of the Lagrangian-Grassmannian is a direct sum of  incidence matrix, regular and sparce with entries in the set  \{ 0, 1\}. 
\end{abstract}
 \maketitle
\section{Introduction}\label{Intro}
In  words of {\it Dusa McDuff} 
"{\it Symplectic geometry is the geometry of closed skew-symmetric form, $[\cdots]$
thus symplectic geometry is essentially topological in nature}" $[\cdots]$, see \cite{bib 2.23}.\\
In this article it is shown that the Lagrangian-Grassmannian also has a geometrical-algebraic nature,
to show this we exhaustively study the set of all homogeneous linear functionals in the vector space $\wedge^nE$, which cancel $L(n, E)$. There are at least two ways to approach this geometrical-algebraic aspect of the Lagrangian-Grassmannian $L(n, E)$, one of them is a classical approach and the other one that we develop here is a slightly different approach.\\
\it{Classic mode):} is given by De Concini and Lakshmibai [1981] \cite{bib2.111} where it is shown that, in its natural projective embedding, the Lagrangian Grassmannian $L(n, E)$ is defined by quadratic relations. These relations are obtained by expressing $L(n, E)$ as a linear section of $G(n, E)$,
so $L(n,E)=G(n,E)\cap {\mathbb P}(L(\omega_n))$, where $ {\mathbb P}(L(\omega_n))$ is the projectivization of a vector space $L_{\omega_n}$ such that $\wedge^nE\simeq L_{\omega_n}\oplus \wedge^{n-2}E$, where $L_{\omega_n}$ is $S_{p_ {2n}}({\mathbb F})$-representation of highest weight $\omega_n=h_1^{*}+\cdots +h_n^{*}$, and where $L(n,E)\subset L_{ \omega_n}$ see \cite[pages 182-184]{bib 2.1112}. This approach is important as can be seen in \cite[section 5]{bib0.001}, \cite[section 4.3]{bib 2.12}, \cite[section 2.2]{bib 2.13}, \cite[p. 823]{bib5a} for example, where they are used to obtain different results about $L(n, E)$.\\ 
Another way, not classic, is to see $L(n, E)=G(n, E)\cap ker f$ where $f$ is the contraction map, in \cite[prop 6]{bib2} all linear functionals homogeneous that nullify the kernel of $f$ are  given. So then it is natural to ask what is the smallest family of all homogeneous linear functionals of $(\wedge^nE)^*$ that vanish $L(n, E)$. We call this collection \it{Family of Linear Relations of Contraction}, short $FRLC$, and we will show its existence and uniqueness. In this article we show that these functionals are, up to a linear combination, all those that cancel $\ker f$ and are the only ones with this property. See \cite[prop 2.1]{bib0.00001}, \cite[prop 2.3]{bib0.000001}, \cite[pag. 4]{bib 2.12334} and \cite[pag. 386]{bib 2.13}, where you can see the importance of these $FRLC$. \\
We call the matrix associated to $FRLC$, the Pl\"ucker Matrix of the Lagrangian-Grassmannian and it is the direct sum of $(0, 1)$-submatrices, regular and sparse. We will also show that these submatrices are the incidence matrices of a collection of subsets that we define here.\\
We can say that the importance of this article resides on the one hand in demonstrating that the kernel of the contraction map is the smallest linear projective space, which contains $L(n, E)$ for any field. And the second aspect is to provide the form of the P\"ucker matrix of the Lagrangian-Grassmannian, which depends on incidence sets.\\
The following aspects do not appear in this article, but they are mentioned since they are part of a work subsequent to this investigation: In \cite[example 11]{bib1.1} it was shown that the functionals that annul the kernel of contraction map  also vanish the Isotropic Grassmannian $IG(k, E)$ which parameterizes all $k$-isotropic vector subspaces of a simplectic vector space $E$, so then it remains to prove whether the family is the smallest that vanishes $IG( k, E)$ and determine its pl\"ucker matrix. The $(0,1)$-regular and sparse matrices that appear in section $4$ induce Parity Check Codes and $LDPC$ see \cite{bib 2.11123} , \cite{bib0.0012} and finally these matrices are quasi-similar which is a concept we introduce here.
%%%%%%%%%%%%%%%%%%%%%%%%%%%%%%%%%%%%%%%%%%%%%%%%%%%%%%
\section{Preliminaries}\label{PreTer}
Using terminology and definitions as given in \cite{bib0.01} and \cite{bib3},
let $E$ be an vector space defined over an arbitrary field ${\mathbb F}$. A  {\it symplectic form} 
is a bilinear map
$$\langle\; ,\;\rangle:E\times  E \longrightarrow {\mathbb F}$$
 that satisfies 
\begin{alignat*}{2}
 \langle v , w\rangle& =-\langle w , v\rangle &\qquad for\;\; all\;\; v, w\in E \\
 \langle v , v\rangle& = 0 &\qquad for\;\; all\;\; v\in E \\
 and\;\;if\; \langle v , w\rangle & = 0   &\qquad for\;\; all\;\; v\in E \Rightarrow w=0\\ 
  \end{alignat*}
and it is said to be skew-symmetric nondegenerate. $(E, \langle, \rangle)$ is called a  {\it symplectic vector space},
a  symplectic vector space $E$ is necessarily of even dimension and there is a basis $e_1,\ldots, e_n, u_1,\ldots u_n$ of $E$ such that $$\langle e_i, e_j \rangle=\langle u_i, u_j \rangle=0 \;\; y \;\; \langle e_i, u_j \rangle=\delta_{ij} $$
 where $\delta_{ij}$ is the Kroneker delta function. 
 A  subspace $U\subseteq E$ is said to be isotropic if $\langle u, u^{\prime} \rangle=0$ for all $u, u^{\prime}\in U$. A subspace $L\subset E$ is said to be \it{Lagrangian subspace} if $L$ is isotropic and $\dim (L)=n$. The collection of Lagrangian subspace of $E$, we call it \it{Lagrangian-Grassmannian} of $E$ or simply Lagrangian-Grassmannian we denote it by $L(n, E)$. A subspace $W\subset E$ is a symplectic subspace of $E$ if the symplectic form in $E$ when restricted to $W$ remains symplectic,
  \subsubsection{indices} Let $m$ be an integer we denote by
  \begin{equation}\label{mbrak}
   [m]:=\{1, 2, \ldots, m \}
   \end{equation}
    to the set of the first $m$ integers.
Let $m$ and $\ell$ be positive integers such that  $\ell< m$ as usual in the literature 
$C^m_{\ell}$ denotes binomial coefficient. If  $\alpha=(\alpha_1,\ldots,\alpha_{\ell})\in {\mathbb N}^{\ell}$ then,  $supp\{ \alpha\}=\{ \alpha_1, \cdots, \alpha_{\ell}\}$, 
 If $s\geq 1$ is a positive integer and  $\Sigma\subset {\mathbb N}$ is a non-empty set,   
we define  the  sets
\begin{equation}\label{setcomb1}
C_s(\Sigma):=\{\alpha=(\alpha_1, \ldots, \alpha_{s})\in {\mathbb N}^s: \alpha_1< \ldots < \alpha_{s} \;and\; supp \{\alpha \}\subseteq \Sigma \}
\end{equation}
Clearly  if $|\Sigma|=m$ then $|C_s(\Sigma)|=C^m_{\ell}$, 
whit this notation if $\ell < m$  we define $I(\ell, m)=C_{\ell}([m])$. so we have
\begin{equation}\label{setcomb2}
I(\ell,m)=\{\alpha=(\alpha_1, \ldots, \alpha_{s}) :
1\leq\alpha_1<\cdots<\alpha_{\ell}\leq m \}
\end{equation}
In general we say 
\begin{equation}
\alpha=(\alpha_1, \ldots, \alpha_s)\in C_s(\Sigma)  
\end{equation}
it there is a permutation $\sigma$ such that arrange the elements of $supp \{\alpha\}$ in increasing order
we have $(\sigma(\alpha_1), \ldots, \sigma(\alpha_s))\in C_s(\Sigma)$. \\
Also if $\alpha$ and $\beta$ are elements of $C_s(\Sigma)$ then we say that 
\begin{equation}\label{indequal000}
\alpha=\beta\;  \Leftrightarrow \;  {\text supp}\{\alpha\}={\text supp}\{\beta\}
\end{equation} 
Let $\alpha \in I(n,2n)$, suppose there are $i, j \in supp\; \alpha$ such that $i+j=2n+1$ in this case 
$j=2n-i+1$ and we write this pair as $P_i= (i, 2n-i+1)$ so we define the set  
\begin{equation}\label{setSigma0001}
\Sigma_n=\big\{ P_1, \ldots, P_n\big\}
\end{equation}
 and if  $\alpha \in I(n,2n)$ such that $\{ i, 2n-i+1\}\subset supp\{ \alpha \}$, then we say that 
$P_i\in supp\{ \alpha \}$ and that $P_i\in supp\{\alpha\}\cap \Sigma_m$.\\
If $k\leq n$ is a positive integer we denote by
\begin{equation}
C_{k/2}(\Sigma_n):= \{P_{\beta}=(P_{\beta_1}, \ldots, P_{\beta_k}): \beta=(\beta_1, \ldots, \beta_k)\in I(k/2, n)) \}
\end{equation}
If $1\leq a_1<a_2<\cdots<a_{2k}\leq 2n$ such that $a_i+a_j\neq 2n+1$  then we define 
\begin{equation}\label{setSigma0011}
\Sigma_{a_1,\ldots,a_{2k}}=\Sigma_n-\{P_{a_1}, \dots, P_{a_{2k}}\}
\end{equation}
so $|\Sigma_{a_1,\ldots,a_{2k}}|=n-2k$. 
Clearly if $n\geq 4$ then for each $\alpha\in I(n, 2n)$ we have to $|supp \{\alpha\}\cap \Sigma_n|=0$ or $|supp \{\alpha\} \cap \Sigma_n|=|supp \{\alpha\}|$ or $1\leq |supp \{\alpha\}\cap \Sigma_n|\leq\lfloor\frac{n-2}{2}\rfloor$ respectively.
Note that $|supp P_{\beta}|=2|supp \beta|$.\\
Given a canonical basis  ${\mathcal B}=\{e_1,\ldots, e_n, u_1,\ldots u_n\}$, see \cite{bib0.01}, of simplectic vector space   $E$, in this article we redefine its elements as follows $e_{n+1}:=u_1,\ldots e_{2n}:=u_n$ and we have 
 ${\mathcal B}=\{e_1,\ldots,e_{2n}\}$ such that
\begin{equation}\label{sympbase1}
\langle e_i,e_j\rangle=\begin{cases}
1& \text{if $j=2n-i+1$}, \\
0 & \text{otherwise}.
\end{cases}
\end{equation}
With such a choice the symplectic form $\langle,\rangle$ can be described as follows,  let $x=(x_1, x_2, \ldots, x_{2n} )$ and $y=(y_1, y_2,\ldots, y_{2n})$ then $\langle x, y \rangle=\sum_{i=1}^n [(x_i\cdot y_{2n+i-1})-(x_{2n+1-i}\cdot y_i)]$. 
 It is easy to verify that pairing is a non-degenerate alternanting on the vector space $E$. We call the above form the {\it standard symplectic form}.
For $\alpha=(\alpha_1,\ldots,\alpha_n)\in I(n,2n)$ write
 \begin{align}\label{notbasis}
e_{\alpha}&=e_{\alpha_1}\swedge\cdots\swedge e_{\alpha_n},\\
e_{\alpha_{rs}}&=e_{\alpha_1}\swedge\cdots\swedge\widehat{e}_{\alpha_r}
\swedge\cdots\swedge\widehat{e}_{\alpha_s}\swedge\cdots   \swedge e_{\alpha_n},\\
p_{i,\alpha_{rs},(2n-i+1)}&=p_{i,\alpha_1 \cdots \widehat{\alpha}_r
\cdots \widehat{\alpha}_s \cdots, \alpha_n(2n-i+1)}
\end{align}
where $\widehat{e}_{{\alpha}_k}$ and $\widehat{\alpha}_k$ means that 
the corresponding term is omitted y $p_{i\alpha_{rs}(2n-i+1)}$ is a scalar. Denote  by $\nwedge^nE$ the
$n$-th exterior power of $E$, which is  generated by $\{ e_{\alpha}:
\alpha \in I(n,2n)\}$. For $w=\sum_{\alpha\in
I(n,2n)}p_{\alpha}e_{\alpha} \in \nwedge^nE$, the coefficients
$p_{\alpha}$ are the {\it Pl\"ucker coordinates of} $w$, see also \cite[pag42]{bib6}.
As in \cite[pg 283]{bib 2.122},
let $n\geq 2$ integer and $E$ simplectic vector space of dimension $2n$, to the linear transformation
\begin{gather*}
f:\nwedge^nE\rightarrow \nwedge^{n-2}E\\
f(w_1\swedge\cdots\swedge w_n)=\sum_{1\leq r<s\leq n}\langle
w_r,w_s\rangle (-1)^{r+s-1}w_1\swedge\cdots\swedge
\widehat{w}_r\swedge\cdots\swedge \widehat{w}_s\swedge\cdots\swedge
w_n,
\end{gather*}
where $\widehat{w}$ means that the corresponding term is omitted ver \cite[pag 283]{bib 2.122} we call him 
\textit { contraction map}.\\
%%%
%%
%
For an $m$-dimensional vector space $E$, denote by $G(\ell, E)$ the set of vector subspaces of dimension $\ell$ of $E$. The Grassmannian $G(\ell, E)$ is a algebraic variety of dimension $\ell(m-\ell)$ and can be embedded in a projective space ${\mathbb P}^{c-1}$, where $c=\binom{m}{\ell}$ by Pl\"ucker embedding.
The {\it Pl\"ucker embedding}
is the injective mapping $\rho:G(\ell,E)\rightarrow {\mathbb
P}(\wedge^mE)$ given on each $W\in G(\ell, E)$ by choosing  a basis
$w_1,\ldots, w_{\ell}$ of $W$ and then mapping the vector subspace
$W\in G(\ell, E)$ to the tensor $w_1\swedge\cdots\swedge w_{\ell}\in
\nwedge^{\ell}E$. Since choosing a different basis  of $W$ changes the
tensor $w_1\swedge\cdots\swedge w_{\ell}$ by a nonzero scalar, this
tensor is a well-defined element in the projective space ${\mathbb
P}(\nwedge^{\ell}E)\simeq{\mathbb P}^{N-1}$, where $N=C^{m}_{\ell}=\dim_{\mathbb{F}}(\nwedge^{\ell}E)$. 
Writing 
 $w \in \wedge^{\ell} E$ as $w= \sum_{\alpha
\in I(\ell,m)} P_\alpha e_\alpha$, the scalars $P_\alpha$ are called  the {\it
Pl\"ucker coordinates} of $w$ and $w_\rho=(P_\alpha)_{\alpha \in I(\ell, m)}$ is
 the  {\it Pl\"ucker vector} of $w$. 
If $w = \sum_{\alpha \in I(\ell,m)}P_\alpha e_\alpha \in
{\mathbb{P}}(\wedge^{\ell} E)$, then $w\in G(\ell,m)$ if and only if for
each pair of tuples $1\leq \alpha_1< \cdots <
\alpha_{\ell-1}\leq m$ and $1\leq \beta_1< \cdots < \beta_{\ell+1}\leq m$, the Pl\"ucker coordinates of $w$ satisfy the quadratic \it{Pl\"ucker relation}
\begin{equation}\label{eq1.1}
 Q_{\alpha, \beta}:=\sum_{i=1}^{\ell+1} (-1)^i P_{\alpha_1\cdots \alpha_{\ell-1}\beta_i}P_{\beta_1, \beta_2\cdots
\widehat{\beta_i} \cdots \beta_{\ell+1}} = 0,
\end{equation}
where  $\widehat{\beta_i}$ means that the corresponding term is omitted and where $\alpha=(\alpha_1\cdots \alpha_{\ell-1}) \in I(\ell-1, m)$,  $\beta=(\beta_1, \beta_2\cdots
\beta_i \cdots \beta_{\ell+1}) \in I(\ell+1,m)$, see \cite[section 4]{bib6}. 
Under the inclusion of  Pl\"ucker the  Lagrangian-Grassmannian  is given by
\begin{gather}\label{LGinGrass}
L(n,E)=\{w_1\swedge\cdots\swedge w_n\in G(n,E): \langle w_i,w_j\rangle=0\;\text{for all $1\leq i<j\leq n$}\}
\end{gather}
Can easily be seen
\begin{gather}\label{LGint}
L(n, E)=G(n,E)\cap \ker f
\end{gather}
In \cite[Proposition 6]{bib2} the kernel of the contraction map $f$ is
characterized as follows: For $w=\sum_{\alpha\in
I(n,2n)}p_{\alpha}e_{\alpha}\in \nwedge^nE$ written in Pl\"ucker
coordinates,  we have that
$$w\in\ker f\iff \sum_{i=1}^np_{i\alpha_{st}(2n-i+1)}=0,\;\text{for all $\alpha_{st}\in I(n-2,2n)$}.$$
For all $\alpha_{rs}\in I(n-2,2n)$ define
\begin{gather}\label{FuncLinPI}
\Pi_{\alpha_{rs}}=\sum_{i=1}^nc_{i,\alpha_{rs}, 2n-i+1} X_{i,\alpha_{rs}, 2n-i+1} 
\end{gather}
where 
$$c_{(i,\alpha_{rs}, 2n-i+1)}=\begin{cases}
1  & \text{if $|\text{supp}\{(i, \alpha_{rs}, 2n-i+1)\}|=n$}, \\
0 & \text{otherwise},
\end{cases}$$
\begin{defn}\label{relinpluker}
Let $E$ symplectic vector space of dimension $2n$.\\
To the smallest family of all homogeneous linear functionals of $(\wedge^nE)^*$ that vanish $L(n, E)$, up to linear combination, we call   \it{Family of Linear Relations of Contraction} of $L(n, E)$, in short $FRLC$.
\end{defn}
Following  \cite{bib 2.1233} if $A\subset {\mathbb P}^s$ is an algebraic set then $Z\langle A \rangle$ denotes the set of zeros of $A$ in a projective space. Thus 
\begin{gather}\label{ZeroSet}
Z\langle \Pi_{\alpha_{rs}} : \alpha_{rs}\in I(n-2, 2n) \rangle\subseteq {\mathbb P}(\wedge^nE) 
\end{gather}
is the set of zeros all $\Pi_{\alpha_{rs}}\in(\wedge^nE)^*$.   
Using  \cite[lemma 2]{bib2} and the above notation we have
\begin{equation}\label{lagrzeroset}
L(n, E)=Z\langle Q_{\alpha, \beta}, \Pi_{\alpha_{rs}} \rangle
\end{equation}
where $Q_{\alpha, \beta}$ and $\Pi_{\alpha_{rs}} $ are as in \ref{eq1.1} and  \ref{FuncLinPI} respectively.
\begin{example}
In the case $L(2, E)$ we have the following equations
\begin{align}\label{lagrzeroset2} 
\begin{cases}
X_{12}X_{34}-X_{13}X_{24}+X_{14}X_{23}&=0\\
X_{14}+X_{23}&=0
\end{cases}
\end{align}
\end{example}
%see \cite[eq. 2) pag. 241 ]{bib2.11}.
%
The rational points of $L(n,E)$ are defined as the set $$L(n,E)({\mathbb F}_q):=Z\langle Q_{\alpha^{\prime}, \beta^{\prime}}, \Pi_{\alpha_{rs}}, x^q_{\alpha}-x_{\alpha} \rangle$$
where $\alpha\in I(n-1, 2n)$,  $\beta\in I(n+1, 2n)$,  $\alpha_{rs}\in I(n-2,2n)$ and  $\alpha \in I(n, 2n)$
more over 
\begin{equation}\label{rationPoint1}
|L(n,E)({\mathbb F}_q)|=\Pi_{i=1}^{n}(1+q^i)
\end{equation}
see \cite[prop. 2.14]{ bib 2.131}.
\subsubsection{matrices}
If  a matrix $A$ has all its coefficients equal  $0$ or $1$  is called a $(0,1)$-{\it matrix}. Give a $(0,1)$-matrix $A$ we say that  is {\it regular} if the number of 1's is fixed in each column and 
has a fixed number of 1's in each row. If $A$ is not regular we say that  is {\it irregular} see \cite{bib5a} and 
\cite{bib5}, for more information. 
A  {\it sparse matrix} is a $(0,1)$-matrix in which most of the elements are zero.\\
\subsubsection{Configuration of subsets}
Following \cite[pag. 3]{bib0.0012} we call  $X=\{ x_1, \ldots, x_n\}$ an $n$-set. Now let $X_1, X_2, \dots, X_m$ be $m$ not necessarily distinct subsets of the $n$-set $X$. We refer to this  colletions of subsets of an $n$-set as a  {\it configuration of  subsets}. We set  $a_{ij}=1$ if $x_j\in X_i$  and we set $a_{ij}=0$ if $x_j\notin X_i$.  The resulting $(0, 1)$-matrix $A=(a_{i,j})$, $i=1,\ldots, m$, $j=1,\ldots, n$ of size $m$ by $n$ is the \it{incidense matrix}
 for the configurations of subsets $X_1, X_2, \dots, X_m$ of the $n$-set $X$. The $1^{\prime}$s in row $\alpha_i$ of
 $A$ display the elements in the subsets $X_i$ and the $1^{\prime}$s in column $\beta_j$ display the ocurrences of the element $x_j$ among the subsets.
 \subsubsection{Configuration of incidence} 
 Let  $S=\{s_1\ldots, s_n \}$ an $n$-set and $S_1, \ldots, S_m$ be $m$ subsets of the $n$-set $S$ and ${\EuScript L}$ the
 $m\times n$ incidence matrix, for the configuration of subsets $S_1, \ldots, S_m$.
The pair
 \begin{equation}\label{confinc}
  \big(S,  S_i )_{i=1}^m
  \end{equation}
  we call {\it configuration of incidence of $S$}.
 If $(S^{\prime},  S_i^{\prime} \big)_{i=1}^m$, donde $S^{\prime}=\{s^{\prime}_1\ldots, s^{\prime}_n \}$, is other configuration of incidence then they are isomorphic  if and only if there is a bijection $$\psi: S\longrightarrow S^{\prime}$$
 $$ \psi(s_i)=s_i^{\prime}$$ such that $\psi(S_i)=S_i^{\prime}$ 
 for all $i=1, \ldots, m$ and note ${\EuScript L}={\EuScript L}^{\prime}$ where ${\EuScript L}$ and  ${\EuScript L}^{\prime}$ are $(m\times n)$-incidence matrices.\\
Let $(S^{\prime}, S_i^{\prime} \big)_{i=1}^m$ be an incidence configuration, with $S$ an $n$-set and $\{ a \} $
  a set of cardinality $1$ then using the cartesian product we define the  \it {cartesian incidence configuration }
  as follows
 \begin{equation}\label{confcart}
 a \times (S,  S_i )_{i=1}^m:=\bigg( \{ a \}\times S, \;\; S_{(a, i)}\bigg)_{i=1}^m 
 \end{equation}
where $S_{(a, i)}:=\{ a \}\times S_i$
\begin{lem}\label{incconfisomr}
The cartesian incidence configuration $\{ x \} \times (S,  S_i )_{i=1}^m$ is isomorphic to $(S,  S_i )_{i=1}^m$
and they have the same incidence matrix.
\end{lem}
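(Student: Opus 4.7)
The plan is to exhibit an explicit isomorphism between the two incidence configurations and then read off equality of their incidence matrices from the definition. Everything should follow mechanically from unwinding the definitions of cartesian incidence configuration given in \eqref{confcart} and of isomorphism of configurations given just before it.

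First I would define the map $\psi : S \longrightarrow \{x\}\times S$ by $\psi(s_i) = (x, s_i)$ for each $i = 1, \ldots, n$. Since $\{x\}$ has cardinality one, this is plainly a bijection: injectivity follows because distinct $s_i$ yield distinct pairs $(x, s_i)$, and surjectivity because every element of $\{x\}\times S$ has the form $(x, s_i)$ for a unique $s_i \in S$. So the ground sets have the same cardinality $n$ and are identified.

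Next I would check the subset-level compatibility $\psi(S_i) = S_{(x,i)}$. By definition, $\psi(S_i) = \{(x, s) : s \in S_i\} = \{x\}\times S_i$, and by \eqref{confcart} this is precisely $S_{(x,i)}$ for each $i = 1, \ldots, m$. This is exactly the condition for $\psi$ to be an isomorphism of configurations of incidence in the sense defined before \eqref{confcart}, so $\{x\}\times(S, S_i)_{i=1}^m \cong (S, S_i)_{i=1}^m$.

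Finally, to conclude the equality of incidence matrices, I would just verify the entries. Let $\mathscr{L} = (a_{ij})$ be the incidence matrix of $(S, S_i)_{i=1}^m$ and $\mathscr{L}' = (a'_{ij})$ that of $\{x\}\times (S, S_i)_{i=1}^m$. For each pair $(i,j)$,
\[
a'_{ij} = 1 \iff (x, s_j) \in \{x\}\times S_i \iff s_j \in S_i \iff a_{ij} = 1,
\]
and otherwise both entries are $0$. Hence $\mathscr{L}' = \mathscr{L}$. There is no real obstacle here; the only care needed is notational, to keep the indexing $S_{(x,i)}$ aligned with the rows of $\mathscr{L}$ so that the entry-by-entry comparison goes through cleanly.
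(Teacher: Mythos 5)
Your proposal is correct and follows essentially the same route as the paper: the paper uses the projection $(x,s)\mapsto s$ while you use its inverse $s\mapsto (x,s)$, but in both cases the argument is the same unwinding of the definitions, checking the bijection, the compatibility $\psi(S_i)=S_{(x,i)}$ (equivalently $\psi(\{x\}\times S_i)=S_i$), and the entrywise equivalence $(x,s_j)\in\{x\}\times S_i \iff s_j\in S_i$ giving equality of the incidence matrices.
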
 
\begin{proof}
Since $|\{ x \} \times (S,  S_i )_{i=1}^m|=| (S,  S_i )_{i=1}^m|$ then the projection mapping 
$$\psi: \{ x \} \times (S,  S_i )_{i=1}^m\longrightarrow (S,  S_i )_{i=1}^m$$ $$(x,s) \longmapsto s$$ 
is one-one and clearly $\psi_{|\{ x \}\times S_i}=S_i$ thus the configurations are isomorphic.  Moreover
$(x, s)\in \{ x \}\times S_i$ if and only if $s\in S_i$ and so both configurations have the same incidence matrix.
\end{proof}
 %%%%%
 \subsection{ Finite Field}
Let ${\mathbb F}_q$ be a finite field with $q$ elements, and denote
by $\overline{\mathbb F}_q$ an algebraic closure of ${\mathbb F}_q$. For a vector space $E$  over ${\mathbb F}_q$ of finite dimension $k$, let $\overline{E}=E\otimes_{{\mathbb F}_q}\overline{\mathbb F}_q$ be the corresponding vector space over the algebraically closed field $\overline{\mathbb F}_q$. We will be considering algebraic varieties in the projective space ${\mathbb P}(\overline{E})={\mathbb P}^{k-1}(\overline{\mathbb
F}_q)$. Recall that a projective variety $X\subseteq{\mathbb P}^{k-1}(\overline{\mathbb
F}_q)$ is defined over the finite field ${\mathbb F}_q$ if its
vanishing ideal can be generated by polynomials with coefficients in
${\mathbb F}_q$. 
\section{Family of Linear Relations of Contraction (FLRC).} 
 %%%
Let  $E=(E, \langle , \rangle)$  symplectic vector space of dimension $2n$ defined over an arbitrary field ${\mathbb F}$ with symplectic base ${\mathcal B}=\{e_1, \ldots, e_{2n} \}$ that satisfies \ref{sympbase1}. 
For $\overline{\alpha}=(\overline{\alpha}_1,\ldots,\overline{\alpha}_n) \in I(n,2n)$ be a fixed index 
and let $\overline{\alpha}_k \in
\text{supp}(\overline{\alpha})$ denoted by 
\begin{equation}\label{setS}
{\mathcal S}=[2n]-\{\overline{\alpha}_k,  2n-\overline{\alpha}_k+1\} 
\end{equation}
and define 
\begin{equation}\label{indSet}
I_{\overline{\alpha}_k}(n-1,2n-2):=C_{n-1}\{ {\mathcal S} \}\\
\end{equation}
see \ref{setcomb1} for this notation. Let
\begin{equation}\label{phiSet}
 \phi:= \{(\beta,\overline{\alpha}_k)\in I(n, 2n): \beta\in
I_{\overline{\alpha}_k}(n-1,2n-2)\}
\end{equation}. 
for all $w=\sum_{\alpha\in I(n,2n)}P_\alpha e_\alpha \in \wedge^n E$ we do
\begin{gather}\label{expw}
 w= \sum_{\alpha\in \phi} P_\alpha e_\alpha
+ \sum_{\alpha\in \phi^c}P_\alpha e_\alpha = \sum_{\beta\in
I_{\overline{\alpha}_k}(n-1,2n-2)}P_{(\beta,\overline{\alpha}_k)}
e_{(\beta,\overline{\alpha}_k)} + \sum_{\alpha\in \phi^c}P_\alpha
e_\alpha.
\end{gather}
Let $e_{\overline{\alpha}_k}\in {\mathcal B}$ be a basic vector and let   $\ell=\langle e_{\overline{\alpha}_k}\rangle$ the isotropic line of $E$ generated by the  vector $e_{\overline{\alpha}_k}$ and 
\begin{equation}\label{Ulset}
{\mathcal{U}}(\ell)=\{W\in L(n, E): \ell\subseteq W\}
\end{equation} 
in  \cite[lemma 1.4.38]{bib3} it shows that ${\mathcal{U}}(\ell)$
 is symplectomorph to $L(n-1, \ell^{\perp}/\ell)$, where $\ell^{\perp}/\ell$ is a symplectic vector space of dimension $2n-2$, generated by the basis
$\{e_1+\ell, \ldots, \widehat{e}_{\overline{\alpha}_k}+\ell, \ldots, \widehat{e}_{2n-\overline{\alpha}_k+1}+\ell,\ldots,e_{2n}+\ell\}$, where  $\widehat{}$ means that the corresponding basic was omitted. The vector space $\wedge^{n-1}(\ell^{\perp}/\ell)$ is generated by $e_{\beta}+\ell:=(e_{\beta_1}+\ell)\wedge\cdots \wedge(e_{\beta_{n-1}}+\ell)$ where $\beta\in I_{\overline{\alpha}_k}(n-1, 2n-2)$.  Let  $w=\sum_{\beta\in I_{\overline{\alpha}_k}(n-1, 2n-2)}P_{\beta}(e_{\beta}+\ell)\in \wedge^{n-1}(\ell^{\perp}/\ell)$ in pl\"ucker coordinates, then $w\wedge e_{\overline{\alpha}_k}=\sum_{\beta\in I_{\overline{\alpha}_k}(n-1, 2n-2)}P_{(\beta, \overline{\alpha}_k)}e_{(\beta, \overline{\alpha}_k)}$, where $P_{(\beta, \overline{\alpha}_k)}=P_{\beta}$. With this notation, consider the injective linear transformation defined on generators
\begin{equation}\label{xx}
\begin{split}
 {_{-}\wedge e_{\overline{\alpha}_k}\atop \quad}{:\atop\quad} {\wedge^{n-1}(\ell^{\perp}/\ell) \atop e_{\beta}+\ell}
{\longrightarrow\atop \longmapsto}{\wedge^n E\atop e_{(\beta, \overline{\alpha}_k)}}{.\atop \quad}
\end{split}
\end{equation}
with $\beta\in I_{\overline{\alpha}_k}(n-1,2n-2)$.
So we have to 
\begin{equation}
 {_{-}\wedge e_{\overline{\alpha}_k}} \big(\sum_{\beta\in
I_{\overline{\alpha}_k}(n-1,2n-2)}p_{\beta}(e_{\beta}+\ell) \big\} \big)=\sum_{\beta\in
I_{\overline{\alpha}_k}(n-1,2n-2)}p_{(\beta,\overline{\alpha}_k)}e_{(\beta, \overline{\alpha}_k)} 
\end{equation}
with $p_{(\beta,\overline{\alpha}_k)}=p_{\beta}$.
As consequence we have $ {\mathcal{U}}(\ell)=_{-}\wedge e_{\overline{\alpha}_k}(L(n-1, \ell^{\perp}/\ell ))$ and so
\begin{equation}\label{Ulexterior}
 {\mathcal{U}}(\ell)  = L(n-1, \ell^{\perp}/\ell )\wedge e_{\overline{\alpha}_k}                                              
 \end{equation}
 so in pl\"ucker coordinates we have
 \begin{equation}\label{setU}
{\mathcal{U}}(\ell)=\big\{ w\in L(n, E): w=\sum_{\beta\in
I_{\overline{\alpha}_k}(n-1,2n-2)}p_{(\beta,\overline{\alpha}_k)}e_{({\beta}, \overline{\alpha}_k)} \big\} 
\end{equation}
Denote by $X_{\beta}:=\big( e_{\beta}+\ell \big)^*$ the basis vector of the dual vector space  $\big( \wedge^{n-1}\ell^{\perp}/\ell\big)^*$ and $X_{(\beta, \alpha_k)}:=e^*_{(\beta, \overline{\alpha}_k)}$
the basis vector  of dual vector space $\big(\wedge^n E \big)^*$.  Now with this notation
we define in generators an injective linear transformation
\begin{equation}\label{xx}
\begin{split}
\xi: (\wedge^{n-1}(\ell^{\perp}/\ell))^*\longrightarrow (\wedge^n E)^*\\
X_{\beta}  \longmapsto X_{(\beta, \overline{\alpha}_k)}
\end{split}
\end{equation}
with $\beta\in I_{\overline{\alpha}_k}(n-1,2n-2)$.\\
%%%%%%%%%%%%%%%%%%
Clearly $Img\; \xi=\big\{h^{\prime}\in (\wedge^n E)^* : h^{\prime}=\sum_{\beta\in I_{\overline{\alpha}_k}(n-1, 2n-2)} A_{(\beta, \overline{\alpha}_k)}X_{(\beta, \overline{\alpha}_k)}\big\}$
and we denote by 
\begin{equation}\label{recimaghprime}
h^{\prime}_0:=\sum_{\beta \in I_{\overline{\alpha}_k}(n-1, 2n-2)}A_{\beta}X_{\beta}\in (\wedge^{n-1}\ell^{\perp}/\ell)^*
\end{equation}
to the unique linear functional on $(\wedge^{n-1}\ell^{\perp}/\ell)^*$ such that $\xi(h^{\prime}_0)=h^{\prime}$, where\\ 
$h^{\prime}=\sum_{\beta\in I_{\overline{\alpha}_k}(n-1, 2n-2)} A_{(\beta, \overline{\alpha}_k)}X_{(\beta, \overline{\alpha}_k)}$ y 
$A_{(\beta, \overline{\alpha}_k)}=A_{\beta}$. With this notation we have the following lemma.
\begin{lem}\label{h0null}
If $h^{\prime}\in Img\; \xi$ and  $h^{\prime}({\mathcal U}(\ell))=0$ then $h^{\prime}_0(L(n-1, \ell^{\perp}/\ell))=0$
\end{lem}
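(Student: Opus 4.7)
The plan is to exploit the fact that $\xi$ is constructed precisely so as to make the evaluation pairings compatible, so the conclusion should reduce to a direct computation once we unravel the identifications in \ref{Ulexterior}.

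First I would pick an arbitrary element $v \in L(n-1, \ell^\perp/\ell)$ and write it in Pl\"ucker coordinates as $v = \sum_{\beta \in I_{\overline{\alpha}_k}(n-1, 2n-2)} p_\beta (e_\beta + \ell)$. By \ref{Ulexterior}, its image $v \wedge e_{\overline{\alpha}_k}$ lies in $\mathcal{U}(\ell)$, and by the description just before \ref{xx} the Pl\"ucker coordinates of this image satisfy $p_{(\beta,\overline{\alpha}_k)} = p_\beta$.

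Next I would evaluate $h'$ on this wedge. By hypothesis $h'(\mathcal{U}(\ell)) = 0$, so
\begin{equation*}
0 = h'(v \wedge e_{\overline{\alpha}_k}) = \sum_{\beta \in I_{\overline{\alpha}_k}(n-1,2n-2)} A_{(\beta,\overline{\alpha}_k)}\, p_{(\beta,\overline{\alpha}_k)}.
\end{equation*}
Substituting the two equalities $A_{(\beta,\overline{\alpha}_k)} = A_\beta$ (from the definition of $h'_0$ in \ref{recimaghprime}) and $p_{(\beta,\overline{\alpha}_k)} = p_\beta$, the right hand side becomes $\sum_{\beta} A_\beta p_\beta = h'_0(v)$. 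Hence $h'_0(v) = 0$, and since $v$ was an arbitrary element of $L(n-1, \ell^\perp/\ell)$ we conclude $h'_0(L(n-1, \ell^\perp/\ell)) = 0$.

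There is no serious obstacle here; the statement is essentially the assertion that $\xi$ is the transpose (up to the identification) of the inclusion $\mathcal{U}(\ell) \hookrightarrow \wedge^n E$ restricted to the image of $\mathrel{\wedge} e_{\overline{\alpha}_k}$. The only place where care is needed is to confirm that every $w \in \mathcal{U}(\ell)$ really arises as $v \wedge e_{\overline{\alpha}_k}$ for some $v \in L(n-1, \ell^\perp/\ell)$ — but this is exactly the content of \ref{Ulexterior} cited from \cite[lemma 1.4.38]{bib3}, so the lemma follows.
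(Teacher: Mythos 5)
Your proposal is correct and follows essentially the same route as the paper: take an arbitrary $v\in L(n-1,\ell^{\perp}/\ell)$, wedge with $e_{\overline{\alpha}_k}$ to land in ${\mathcal U}(\ell)$ via \ref{Ulexterior}, apply the hypothesis $h^{\prime}({\mathcal U}(\ell))=0$, and use the identifications $A_{(\beta,\overline{\alpha}_k)}=A_{\beta}$ and $P_{(\beta,\overline{\alpha}_k)}=P_{\beta}$ to conclude $h^{\prime}_0(v)=0$. No meaningful difference from the paper's argument.
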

\begin{proof}
    Let  $h^{\prime}_0=\sum_{\beta \in I_{\overline{\alpha}_k}(n-1, 2n-2)}A_{\beta}X_{\beta}$  as in  \ref{recimaghprime}   and  $w=\sum_{\beta \in I_{\overline{\alpha}_k}(n-1, 2n-2)}P_{\beta}(e_{\beta}+\ell)$ an arbitrary element in 
   $L(n-1, \ell^{\perp}/\ell)$  then $_{-}\wedge e_{\overline{\alpha}_k}(w)=\sum_{\beta\in I_{\overline{\alpha}_k}(n-1, 2n-2)} P_{(\beta, \overline{\alpha}_k)}e_{(\beta, \overline{\alpha}_k)}\in {\mathcal U}(\ell)$ so then we have 
$h^{\prime}(_{-}\wedge e_{\overline{\alpha}_k}(w))=\sum_{\beta\in I_{\overline{\alpha}_k}(n-1, 2n-2)} A_{(\beta, \overline{\alpha}_k)}P_{(\beta, \overline{\alpha}_k)}=0$ this implies that 
$$\sum_{\beta\in I_{\overline{\alpha}_k}(n-1, 2n-2)} A_{\beta}P_{\beta}=0$$ since $P_{(\beta, \overline{\alpha}_k)}=P_{\beta}$   and 
$A_{(\beta, \overline{\alpha}_k)}=A_{\beta}$ we have $h^{\prime}_0(w)=\sum_{\beta\in I_{\overline{\alpha}_k}(n-1, 2n-2)} A_{\beta}P_{\beta}=0$ and  $h^{\prime}_0(L(n-1, \ell^{\perp}/\ell))=0$
\end{proof}
 \begin{lem}\label{indiceinsupp}
 Let $E=(E, \langle , \rangle)$ a symplectic vector space of dimension $2n$.\\ 
 Let $h=\sum_{\alpha \in I(n, 2n)}A_{\alpha}X_{\alpha}\in (\wedge^n E)^*$ nonzero such that $h(L(n, E))=0$
and  $A_{\overline{\alpha}}\neq 0$ a coefficient of $h$ then it exists $\{ \overline{\alpha}_i, \overline{\alpha}_j\}\subseteq supp \{ \overline{\alpha}\}$  such that $ \overline{\alpha}_i +  \overline{\alpha}_j = 2n+1$ 
  \end{lem}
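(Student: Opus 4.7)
The plan is to prove the contrapositive: assuming that no two indices of $\overline{\alpha}$ sum to $2n+1$, I will exhibit a specific point of $L(n,E)$ on which $h$ evaluates to exactly $A_{\overline{\alpha}}$, forcing $A_{\overline{\alpha}}=0$.

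The key observation is that any index $\overline{\alpha}\in I(n,2n)$ whose support contains no complementary pair gives rise to a ``coordinate Lagrangian'' in the symplectic basis $\mathcal{B}$. More precisely, by the description of the standard symplectic form in \eqref{sympbase1}, $\langle e_r,e_s\rangle$ is nonzero exactly when $r+s=2n+1$. So under the standing assumption, $\langle e_{\overline{\alpha}_r},e_{\overline{\alpha}_s}\rangle=0$ for every pair of indices in $\mathrm{supp}(\overline{\alpha})$. I would then set
\begin{equation*}
W_{\overline{\alpha}}:=\langle e_{\overline{\alpha}_1},\ldots,e_{\overline{\alpha}_n}\rangle\subset E,
\end{equation*}
which is an isotropic subspace of dimension $n$. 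By \eqref{LGinGrass}, $W_{\overline{\alpha}}\in L(n,E)$.

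Next, I would compute the Plücker image of $W_{\overline{\alpha}}$: choosing the given basis, $\rho(W_{\overline{\alpha}})=e_{\overline{\alpha}_1}\nwedge\cdots\nwedge e_{\overline{\alpha}_n}=e_{\overline{\alpha}}$, so in Plücker coordinates $P_\beta(W_{\overline{\alpha}})=\delta_{\beta,\overline{\alpha}}$. Evaluating the linear functional $h=\sum_\alpha A_\alpha X_\alpha$ at this point gives
\begin{equation*}
h(W_{\overline{\alpha}})=\sum_{\alpha\in I(n,2n)}A_\alpha\,\delta_{\alpha,\overline{\alpha}}=A_{\overline{\alpha}}.
\end{equation*}
Since $h$ vanishes on $L(n,E)$ by hypothesis, this forces $A_{\overline{\alpha}}=0$, contradicting $A_{\overline{\alpha}}\neq 0$.

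I do not expect a real obstacle here: the whole argument rests on the single structural fact that ``indices without a complementary pair'' index coordinate Lagrangians whose Plücker vectors are the standard basis vectors of $\wedge^n E$. Once that is noted, the functional $h$ is directly probed at the single coordinate $X_{\overline{\alpha}}$. The only care needed is to keep track that $\mathrm{supp}(\overline{\alpha})$ has exactly $n$ elements and that the pairing \eqref{sympbase1} is zero outside complementary pairs, both of which are immediate from the preliminaries.
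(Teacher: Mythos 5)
Your argument is correct and is essentially the same as the paper's: both assume no pair of indices in $supp\{\overline{\alpha}\}$ sums to $2n+1$, observe that then $e_{\overline{\alpha}}=e_{\overline{\alpha}_1}\swedge\cdots\swedge e_{\overline{\alpha}_n}$ is a (coordinate) Lagrangian point of $L(n,E)$, and evaluate $h$ there to force $A_{\overline{\alpha}}=0$, a contradiction. No gap to report.
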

 \begin{proof}
 Suppose that for each $\{ \overline{\alpha}_i, \overline{\alpha}_j\}\subseteq supp \{ \overline{\alpha}\}$ 
 you have to $\overline{ \alpha}_i + \overline{\alpha}_j\neq  2n+1$ this means that  $\langle e_{\overline{\alpha}_i}, e_{\overline{\alpha}_j} \rangle=0$ then  $e_{\overline{\alpha}}=e_{\overline{\alpha}_1}\wedge \cdots \wedge e_{\overline{\alpha}_n}\in L(n, E)$ and so then $h(e_{\overline{\alpha}})=A_{\overline{\alpha}}=0$ which is a contradiction. 
 Then there are $\{ \overline{\alpha}_i, \overline{\alpha}_j\}\subseteq supp \{ \overline{\alpha}\}$ such that $ \overline{\alpha}_i +  \overline{\alpha}_j = 2n+1$
  \end{proof}
\begin{thm}\label{funtiform}
Let $E$ simplectic vector space of  dimension $2n$, $r=\lfloor\frac{n-2}{2}\rfloor$, then
 for every $h=\Sigma_{\alpha\in I(n, 2n)}A_{\alpha} X_{\alpha}\in (\wedge^nE)^*$  such that  $h(L(n, E))=0$  satisfies one of the following :
\begin{description}
\item[ 1)] $h$  is of the form
\begin{equation}\label{funtional 1}
h=\sum_{P_{\theta}\in C_{\lfloor\frac{n-2}{2}\rfloor}(\Sigma_n)} h_{P_{\theta}} + \sum_{P_{(\theta, j)}\in C_{\lfloor\frac{n-3}{2}\rfloor}(\Sigma_n)\times \{ j\}} h_{(P_{\theta}, j)}\in (\wedge^n E)^*
\end{equation}
where
\begin{equation}\label{funtional 12}
h_{P_{\theta}}=\sum_{i=1}^nA_{(P_{\theta_1}, \dots, P_{\theta_{\lfloor\frac{n-2}{2}\rfloor}} P_i)} X_{(P_{\theta_1}, \dots, P_{\theta_{\lfloor\frac{n-2}{2}\rfloor}} P_i)}
\end{equation}
for  $P_{\theta}=(P_{\theta_1}, \dots, P_{\theta_{\lfloor\frac{n-2}{2}\rfloor}} )\in C_{\lfloor\frac{n-2}{2}\rfloor}(\Sigma_n)$,
or
\begin{equation}\label{funtional 13}
h_{(P_{\theta}, j)}=\sum_{i=1}^nA_{(P_{\theta_1}, \dots, P_{\theta_{\lfloor\frac{n-3}{2}\rfloor}}, j,  P_i)} X_{(P_{\theta_1}, \dots, P_{\theta_{\lfloor\frac{n-3}{2}\rfloor}}, j,  P_i)}
\end{equation}
for  $(P_{\theta}, j)=(P_{\theta_1}, \dots, P_{\theta_{\lfloor\frac{n-3}{2}\rfloor}}, j )\in C_{\lfloor\frac{n-3}{2}\rfloor}(\Sigma_n)\times \{ j \}$, for $j=1, \ldots, n$
\item[ 2)] If $h$ is not of the form \ref{funtional 1}, is not of the form \ref{funtional 12} or not of the form \ref{funtional 13}, in this case $h$ has at least one non-zero coefficient of the form
\begin{equation}\label{coefficient}
A_{(\overline{\alpha}_1, \ldots, \overline{\alpha}_{2 l}, P_{\theta})}
\end{equation}
 with   
 $1\leq \overline{\alpha}_1< \ldots, <\overline{\alpha}_{2 l}\leq n$ such that $\overline{\alpha}_i+ \overline{\alpha}_j\neq 2n+1$ for all $1\leq i < j \leq n$, $1\leq \ell \leq r-2$  and $P_{\theta}=(P_{\theta_1}, \dots, P_{\theta_{\lfloor\frac{n - 2\ell}{2}\rfloor}})\in C_{\lfloor\frac{n-2 \ell}{2}\rfloor}(\Sigma_{\overline{\alpha}_1, \ldots, \overline{\alpha}_{2 \ell}})$.
\end{description} 
 \end{thm}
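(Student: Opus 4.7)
The plan is to classify the non-zero coefficients of $h$ by how many symplectic pairs $P_i \in \Sigma_n$ their supports contain, then perform a dichotomy. First I would apply Lemma \ref{indiceinsupp}: for every non-zero $A_{\overline{\alpha}}$ of $h$, the set $\text{supp}\{\overline{\alpha}\}$ must contain at least one pair $\{i,2n-i+1\}$, since otherwise $e_{\overline{\alpha}} \in L(n,E)$ would force $h(e_{\overline{\alpha}}) = A_{\overline{\alpha}} = 0$. I would then introduce the invariants $p(\overline{\alpha}) := |\text{supp}\{\overline{\alpha}\} \cap \Sigma_n|$ (number of symplectic pairs contained in the support) and $s(\overline{\alpha}) := n - 2p(\overline{\alpha})$ (number of \emph{unpaired} indices, i.e.\ those $j \in \text{supp}\{\overline{\alpha}\}$ with $2n - j + 1 \notin \text{supp}\{\overline{\alpha}\}$), so that $1 \leq p(\overline{\alpha}) \leq \lfloor n/2 \rfloor$.

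The dichotomy then runs as follows. \emph{Case A}: every non-zero coefficient satisfies $p(\overline{\alpha}) = \lfloor n/2 \rfloor$, so $s(\overline{\alpha}) \in \{0,1\}$ according to the parity of $n$. If $s(\overline{\alpha}) = 0$ (which forces $n$ even), I would fix any $\lfloor(n-2)/2\rfloor$ of the pairs occurring in $\overline{\alpha}$ as $P_\theta$ and let $P_i$ be the remaining pair, so the monomial contributes to the block $h_{P_\theta}$ of (\ref{funtional 12}); if $s(\overline{\alpha}) = 1$ (which forces $n$ odd), I would isolate the unpaired index $j$ and proceed analogously to produce a contribution to $h_{(P_\theta, j)}$ of (\ref{funtional 13}). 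Summing these blocks expresses $h$ in the form (\ref{funtional 1}). \emph{Case B}: some non-zero $A_{\overline{\alpha}}$ has $p(\overline{\alpha}) < \lfloor n/2 \rfloor$, hence $s(\overline{\alpha}) \geq 2$; setting $\ell := \lfloor s(\overline{\alpha})/2 \rfloor \geq 1$, the unpaired indices $\overline{\alpha}_1 < \cdots < \overline{\alpha}_{2\ell}$ satisfy $\overline{\alpha}_i + \overline{\alpha}_j \neq 2n+1$ by definition of \emph{unpaired}, while the remaining pairs form an element $P_\theta \in C_{\lfloor(n-2\ell)/2\rfloor}(\Sigma_{\overline{\alpha}_1,\ldots,\overline{\alpha}_{2\ell}})$, giving a coefficient of exactly the required shape (\ref{coefficient}).

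The principal obstacle I anticipate is the Case A grouping: since a fully paired monomial $X_{\overline{\alpha}}$ can be attributed to several distinct $P_\theta$'s (any of its pairs can play the role of the last pair $P_i$), the decomposition (\ref{funtional 1}) is over-complete, and one must verify that this redundancy is compatible with an actual representation of the given $h$---either by explicitly solving for the block coefficients, or by a rank/dimension argument on the span of the blocks $\{h_{P_\theta}, h_{(P_\theta,j)}\}$ inside $(\wedge^n E)^{*}$. A secondary subtlety is justifying the stated range $1 \leq \ell \leq r-2$ in Case B, which requires tracking how many pairs of $\Sigma_n$ survive once the pairs meeting the $\overline{\alpha}_i$'s are removed, and handling the parity of $n$ carefully so that $s(\overline{\alpha})$ is actually even and equals $2\ell$.
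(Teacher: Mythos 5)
Your proposal follows essentially the same route as the paper's own (very brief) proof: invoke Lemma \ref{indiceinsupp} to ensure every non-zero coefficient of $h$ contains at least one symplectic pair, then split into the case where every non-zero coefficient is supported entirely on pairs of $\Sigma_n$ (yielding the block form \ref{funtional 1}) and the case where some coefficient has unpaired indices (yielding a coefficient of the shape \ref{coefficient}). Your bookkeeping with $p(\overline{\alpha})$ and $s(\overline{\alpha})$, and the caveats about the over-complete block decomposition and the parity/range of $\ell$, are refinements of details that the paper's two-line argument passes over silently, so the approach is the same but more carefully executed.
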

 \begin{proof}
 By lemma \ref{indiceinsupp} each coefficient $A_{\alpha}\neq 0$ de $h$ satisfies $supp\{\alpha\}\cap \Sigma_n\neq\emptyset$, then   $supp\{\alpha \}\subset \Sigma_n$ for all  $A_{\alpha}\neq 0$ and so we have
  $h$ is of the form \ref{funtional 1}.  Now if $A_{\alpha}\neq 0$ satisfies that $1\leq |supp\{\alpha\}\cap \Sigma_n|<\frac{n-2\ell}{2}$ then $h$ is of the form \ref{coefficient}. 
 \end{proof} 
\begin{lem}\label{lem2}
Let $h \in (\wedge^n E)^*$ such that $h(L(n,E)) = 0$ and $A_{(\overline{\alpha}_1, \ldots, \overline{\alpha}_{2 l}, P_{\theta})}\neq 0$ a coefficient that satisfies \ref{coefficient} then there are
$h^{\prime}, h^{\prime\prime}\in (\wedge^n E)^*$ such that $h=h^{\prime} + h^{\prime\prime}$, $h^{\prime}\neq 0$ and $h^{\prime}({\mathcal{U}}(\ell))=h''({\mathcal{U}}(\ell))=0$  
\end{lem}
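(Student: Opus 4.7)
My plan is to single out an isotropic coordinate line whose base vector sits inside the distinguished index and split $h$ accordingly, using the explicit description \eqref{setU} of $\mathcal U(\ell)$ in Plücker coordinates. First I would set $\overline{\alpha}_k := \overline{\alpha}_1$ (any of the $\overline{\alpha}_i$ from the nonzero coefficient would do equally well) and take $\ell = \langle e_{\overline{\alpha}_k}\rangle$, reusing the symbol $\ell$ as in \eqref{Ulset}. By \eqref{sympbase1} this is isotropic, and by \eqref{setU} every $w \in \mathcal U(\ell)$ has $P_\alpha = 0$ unless $\overline{\alpha}_k \in \operatorname{supp}\alpha$ and $2n-\overline{\alpha}_k+1 \notin \operatorname{supp}\alpha$.

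Next I would decompose
\[
h' := \sum_{\substack{\alpha \in I(n,2n)\\ \overline{\alpha}_k \in \operatorname{supp}\alpha,\; 2n-\overline{\alpha}_k+1 \notin \operatorname{supp}\alpha}} A_\alpha X_\alpha, \qquad h'' := h - h'.
\]
Every monomial $X_\alpha$ remaining in $h''$ has $\alpha$ either missing $\overline{\alpha}_k$ or containing both $\overline{\alpha}_k$ and $2n-\overline{\alpha}_k+1$, so $X_\alpha$ vanishes on $\mathcal U(\ell)$; hence $h''(\mathcal U(\ell))=0$. Since $\mathcal U(\ell)\subseteq L(n,E)$ and $h(L(n,E))=0$, it follows at once that $h'(\mathcal U(\ell)) = h(\mathcal U(\ell)) - h''(\mathcal U(\ell)) = 0$.

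The real content is showing $h'\neq 0$, and this is where the combinatorial form \eqref{coefficient} of the distinguished coefficient is exactly what is needed. I would verify that the index $(\overline{\alpha}_1,\ldots,\overline{\alpha}_{2\ell}, P_\theta)$ lies in the summation range of $h'$: clearly $\overline{\alpha}_k=\overline{\alpha}_1$ is in its support; the hypothesis $\overline{\alpha}_i+\overline{\alpha}_j\neq 2n+1$ rules out $2n-\overline{\alpha}_k+1$ appearing among $\overline{\alpha}_2,\ldots,\overline{\alpha}_{2\ell}$; and $P_\theta\in C_{\lfloor(n-2\ell)/2\rfloor}(\Sigma_{\overline{\alpha}_1,\ldots,\overline{\alpha}_{2\ell}})$ together with \eqref{setSigma0011} forces $\operatorname{supp}(P_\theta)$ to avoid both $\overline{\alpha}_k$ and $2n-\overline{\alpha}_k+1$. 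Hence $A_{(\overline{\alpha}_1,\ldots,\overline{\alpha}_{2\ell}, P_\theta)}$ is a genuine coefficient of $h'$, so $h'\neq 0$.

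The main obstacle is precisely the bookkeeping in the previous paragraph: one has to check from \eqref{setSigma0011} that removing the pair $P_{\overline{\alpha}_i}$ deletes both $\overline{\alpha}_i$ and $2n-\overline{\alpha}_i+1$ from the available pool, so that $P_\theta$ cannot accidentally reintroduce the forbidden index $2n-\overline{\alpha}_k+1$ and corrupt the support condition defining $h'$. Once this combinatorial fact is in place, the clean split $h=h'+h''$ delivers the lemma immediately.
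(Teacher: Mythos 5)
Your decomposition is exactly the paper's: your support condition ($\overline{\alpha}_k \in \operatorname{supp}\alpha$ and $2n-\overline{\alpha}_k+1 \notin \operatorname{supp}\alpha$) describes precisely the set $\phi$ of \ref{phiSet}, so $h'$, $h''$ coincide with the paper's $\sum_{\alpha\in\phi}A_\alpha X_\alpha$ and $\sum_{\alpha\in\phi^c}A_\alpha X_\alpha$, and the vanishing on $\mathcal{U}(\ell)$ via \ref{setU} and the nonvanishing of $h'$ via the distinguished coefficient are argued the same way (you merely fix $\overline{\alpha}_k=\overline{\alpha}_1$ where the paper allows any $\overline{\alpha}_k$, and you spell out the combinatorial check more explicitly). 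The proof is correct and essentially identical to the paper's.
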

\begin{proof} 
If $A_{(\overline{\alpha}_1, \ldots, \overline{\alpha}_{2k}, P_{\theta})}$ a coeficient 
 nonzero with $\overline{\alpha}=(\overline{\alpha}_1, \ldots, \overline{\alpha}_{2 l}, P_{\theta})\in I(n-2, 2n)$,  
 such that $\overline{\alpha}_i+ \overline{\alpha}_j\neq 2n+1$   y $P_{\theta}=(P_{\theta_1}, \dots, P_{\theta_{\lfloor\frac{n-2 l}{2}\rfloor}})\in C_{\lfloor\frac{n-2 l}{2}\rfloor}(\Sigma_{(\overline{\alpha}_1, \ldots, \overline{\alpha}_{2 l}})$.
Let $\overline{\alpha}_k\in \{\overline{\alpha}_1, \ldots, \overline{\alpha}_{2 l}\}$, an arbitrary element and $\phi=\{(\beta,\overline{\alpha}_k)\in I(n,2n): \beta \in I_{\overline{\alpha}_k}(n-1,2n-2)\}$ as in   \ref{phiSet}
then clearly  $(\overline{\alpha}_1, \ldots, \overline{\alpha}_k, \ldots, \overline{\alpha}_{2\ell}, P_{\theta})\in \phi$
as $P_{\theta}\in \Sigma_{(\overline{\alpha}_1, \ldots, \overline{\alpha}_{2 l}})$.
If $h=\Sigma_{\alpha\in I(n, 2n)}A_{\alpha}X_{\alpha}\in (\wedge^n E)^*$ then clearly $h=\Sigma_{\alpha\in \phi}A_{\alpha}X_{\alpha}+\Sigma_{\alpha\in \phi^c}A_{\alpha}X_{\alpha}$ we define the linear functionals
$h^{\prime}:=\Sigma_{\alpha\in \phi}A_{\alpha}X_{\alpha}$  and $h^{\prime\prime}:=\Sigma_{\alpha\in \phi^c}A_{\alpha}X_{\alpha}$, both in $ (\wedge^n E)^*$, such that $h=h^{\prime}+h^{\prime\prime}$ so
 $h^{\prime} = \sum_{\beta \in I_{\overline{\alpha}_k}(n-1,2n-2)} A_{(\beta,\overline{\alpha}_k)}X_{(\beta,\overline{\alpha}_k)}$ and $h^{\prime\prime}=\sum_{\alpha\in \phi^c} A_{\alpha}X_{\alpha}$.
Note that $h^{\prime}\neq 0$, because as we mentioned before $(\overline{\alpha}_1, \ldots,\overline{\alpha}_k, \ldots  \overline{\alpha}_{2\ell}, P_{\theta})\in \phi$, from \ref{setU} we have that
$h^{\prime\prime}({\mathcal{U}}(\ell))=0$ that is $h({\mathcal{U}}(\ell))=h^{\prime}({\mathcal{U}}(\ell))$, since $h(L(n,E))=0$ we obtain
$h^{\prime}({\mathcal{U}}(\ell))=0$.
\end{proof}
%%%%%%%%%%%%%%%%%%%%%%%%%%%
As in \ref{notbasis} we denote by $\beta_{rs}=(\beta_1, \ldots,\widehat{\beta}_r, \ldots, \widehat{\beta}_s, \ldots, \beta_{n-1} )\in I_{\overline{\alpha}_k}(n-3, 2n-2)$, where \;\; $\widehat{}$\;\; means that the corresponding term is omitted. We define
\begin{equation}\label{subpi}
\Pi_{(\beta_{rs},\overline{\alpha}_k)}:= \sum_{i=1}^n
X_{i(\beta_{rs},\overline{\alpha}_k)(2n-i+1)} 
\end{equation}
functional in $(\bigwedge^n E)^*$
\begin{lem}\label{ULinKer}
 For all $\beta_{rs}\in I_{\overline{\alpha}_k}(n-3, 2(n-1))$ 
 where $1\leq r < s \leq 2n $ and $r,s\in {\mathcal S}$ with  $S$ as in \ref{setS}      
let $\Pi_{(\beta_{rs},\overline{\alpha}_k)}$ as \ref{subpi} then \\ 
 $$ {\mathcal{U}}(\ell) \subseteq \bigcap_{\beta_{rs}\in I_{\overline{\alpha}_k}
(n-3,2n-2)} \ker \Pi_{(\beta_{rs},\overline{\alpha}_k)}.$$
\end{lem}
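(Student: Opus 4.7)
The plan is to reduce the claim to the kernel-of-contraction characterization for the \emph{smaller} Lagrangian-Grassmannian $L(n-1, \ell^{\perp}/\ell)$, using the isomorphism $\mathcal{U}(\ell) = L(n-1, \ell^{\perp}/\ell) \wedge e_{\overline{\alpha}_k}$ established in \eqref{Ulexterior}. More concretely, I want to show that evaluating $\Pi_{(\beta_{rs},\overline{\alpha}_k)}$ on an element of $\mathcal{U}(\ell)$ is the same thing, under the Plücker identification $P_{(\gamma,\overline{\alpha}_k)} = P_\gamma$, as evaluating a contraction relation of the smaller symplectic space $\ell^{\perp}/\ell$ on the corresponding element of $L(n-1,\ell^{\perp}/\ell)$.

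First I would take an arbitrary $w \in \mathcal{U}(\ell)$ and write it in Plücker coordinates as in \eqref{setU}, so that $w = w' \wedge e_{\overline{\alpha}_k}$ with $w' = \sum_{\beta \in I_{\overline{\alpha}_k}(n-1, 2n-2)} P_\beta (e_\beta + \ell) \in L(n-1, \ell^{\perp}/\ell)$ and $P_{(\beta, \overline{\alpha}_k)} = P_\beta$. Because the symplectic basis of $\ell^{\perp}/\ell$ is indexed by $\mathcal{S} = [2n] \setminus \{\overline{\alpha}_k, 2n-\overline{\alpha}_k+1\}$ and the pairing $\langle e_i+\ell, e_j+\ell \rangle$ still vanishes unless $i+j=2n+1$, the characterization of $\ker f$ recalled in the preliminaries (applied to the contraction map $f_{n-1}$ on $\wedge^{n-1}(\ell^{\perp}/\ell)$) gives, for every $\beta_{rs} \in I_{\overline{\alpha}_k}(n-3, 2n-2)$,
\begin{equation*}
\sum_{\substack{1\le i\le n \\ i,\, 2n-i+1 \in \mathcal{S}\setminus\mathrm{supp}\{\beta_{rs}\}}} P_{(i,\beta_{rs},2n-i+1)} \;=\; 0.
\end{equation*}

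Next I would evaluate $\Pi_{(\beta_{rs},\overline{\alpha}_k)}(w) = \sum_{i=1}^{n} P_{(i,\beta_{rs},\overline{\alpha}_k,2n-i+1)}$. The terms with repeated indices vanish automatically (Plücker coordinates with a repeated subscript are $0$), so the only surviving terms are those for which $i \ne \overline{\alpha}_k$, $2n-i+1 \ne \overline{\alpha}_k$, and $\{i,2n-i+1\}\cap\mathrm{supp}\{\beta_{rs}\}=\emptyset$; this is exactly the condition $i,2n-i+1 \in \mathcal{S}\setminus\mathrm{supp}\{\beta_{rs}\}$. Using the identification $P_{(i,\beta_{rs},\overline{\alpha}_k,2n-i+1)} = P_{(i,\beta_{rs},2n-i+1)}$ (up to the sign induced by moving $\overline{\alpha}_k$ to the end, which is a global sign factor and therefore harmless), the surviving sum is exactly the relation displayed above, which vanishes since $w' \in \ker f_{n-1}$.

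The main technical point to watch carefully is the bookkeeping between the two sets of Plücker coordinates: matching the indexing convention of $I_{\overline{\alpha}_k}(n-3,2n-2)$ with the increasing order required in $I(n-1,2n-2)$ and then in $I(n,2n)$, and verifying that the sign introduced when reordering the wedge $e_i \wedge e_{\beta_{rs}} \wedge e_{\overline{\alpha}_k} \wedge e_{2n-i+1}$ is the same for every $i$ in the sum (so that it factors out). Once this sign bookkeeping is done the identification $P_{(\beta,\overline{\alpha}_k)}=P_\beta$ of \eqref{Ulexterior} converts the relation for $w'$ into $\Pi_{(\beta_{rs},\overline{\alpha}_k)}(w)=0$, and since $w \in \mathcal{U}(\ell)$ and $\beta_{rs}$ were arbitrary, the asserted inclusion follows.
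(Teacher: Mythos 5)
Your argument is correct in substance, but it takes a different route from the paper. The paper never descends to the smaller symplectic space: it simply takes $w\in{\mathcal U}(\ell)\subseteq L(n,E)\subseteq\ker f$, expands $f(w)=0$ with $w$ written as in \eqref{setU}, and reads off the vanishing of the coefficient of each $e_{(\beta_{rs},\overline{\alpha}_k)}$, which is literally $\Pi_{(\beta_{rs},\overline{\alpha}_k)}(w)=0$; in particular no coordinate transfer between $\wedge^{n-1}(\ell^{\perp}/\ell)$ and $\wedge^{n}E$ is needed, and the sign bookkeeping you flag never arises. (An even shorter variant is available to the paper: by Remark \ref{MorPi} each $\Pi_{(\beta_{rs},\overline{\alpha}_k)}$ is one of the functionals $\Pi_{\alpha_{rs}}$ of \eqref{FuncLinPI}, and these vanish on $\ker f\supseteq L(n,E)\supseteq{\mathcal U}(\ell)$ by the characterization of $\ker f$ recalled from \cite{bib2}.) Your reduction via \eqref{Ulexterior} to $w'\in L(n-1,\ell^{\perp}/\ell)\subseteq\ker f_{n-1}$ is legitimate — the index set ${\mathcal S}$ is stable under $i\mapsto 2n+1-i$, so the pairing structure transfers and the kernel characterization applies downstairs — and it fits the inductive philosophy used elsewhere in the paper (Lemma \ref{h0null}, Proposition \ref{morphhom}); what it costs is exactly the bookkeeping you mention. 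One caution there: your parenthetical claim that the sign created by moving $e_{\overline{\alpha}_k}$ to the end is a \emph{global} factor is not accurate under a strict convention of increasing indices with antisymmetric coordinates, since that sign depends on how many of $i,\,2n-i+1$ exceed $\overline{\alpha}_k$ and hence varies with $i$; under the paper's own index conventions (indices identified by support, as in \eqref{indequal000}, with the identification $P_{(\beta,\overline{\alpha}_k)}=P_{\beta}$) no signs occur and your transfer goes through as written, so this does not invalidate the argument, but if you want a convention-independent proof you should either track these signs term by term or adopt the paper's direct computation upstairs.
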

\begin{proof}
Let $w\in {\mathcal{U}}(\ell)$ be from  \ref{setU} we
have $ w=\sum_{\beta \in I_{\overline{\alpha}_k}(n-1,2n-2)}
P_{(\beta,\overline{\alpha}_k)} e_{(\beta,\overline{\alpha}_k)} $ 
and $f(w)=0 $ for $f$ contraction map, we have $$f(w) = \sum_{\beta
\in I_{\overline{\alpha}_k}(n-1,2n-2)}
P_{(\beta,\overline{\alpha}_k)} f(e_{(\beta,\overline{\alpha}_k)})=
$$ $$ = \sum_{\beta \in I_{\overline{\alpha}_k}(n-1,2n-2)}
P_{(\beta,\overline{\alpha}_k)} \big(\sum_{1\leq r < s \leq n}  \langle
e_{\alpha_r}, e_{\alpha_s}\rangle (-1)^{r+s-1}
e_{(\beta,\overline{\alpha}_k)_{rs}}\big)  $$
 $$= \sum_{1\leq r < s \leq
n}(\sum_{1\leq \rho_1 < \rho_2 \leq n}
P_{(\beta_{rs},\overline{\alpha}_k,\rho_1, \rho_2)} \langle
e_{\rho_1}, e_{\rho_2}\rangle (-1)^{\rho_1+\rho_2-1})e_{(\beta, \overline{\alpha}_k)_{rs}}
= 0 $$
 then $$  \sum_{1\leq \rho_1 < \rho_2 \leq n}
P_{(\beta_{rs},\overline{\alpha}_k,\rho_1, \rho_2)} (-1)^{\rho_1+\rho_2-1} \langle
e_{\rho_1}, e_{\rho_2}\rangle =0.$$
where $\beta_{rs}\in I_{\overline{\alpha}_k}(n-3,2n-2)$ and as stated before
$(\beta_{rs},\overline{\alpha}_k,\rho_1, \rho_2)\in I(n, 2n)$.
 Now $\langle e_{\rho_1},
e_{\rho_2}\rangle = 1$ iff $\rho_1+\rho_2 = 2n+1$, note that with this condition we have 
$(-1)^{\rho_1+\rho_2-1}=1$. Renaming $\rho_1 = i$,
we have $\rho_2 = 2n-i+1$.
 Then
\begin{align*} 
  \sum_{1\leq \rho_1 < \rho_2\leq n}
P_{(\beta_{rs},\overline{\alpha}_k)\rho_1\rho_2}= \sum_{i=1}^n
P_{i(\beta_{rs}\overline{\alpha}_k) 2n-i+1} = 0
\end{align*}
for all
$\beta_{rs} \in I_{\overline{\alpha}_k}(n-3,2n-2)$, that is
${\mathcal{U}}(\ell) \subseteq \ker
\Pi_{(\beta_{rs},\overline{\alpha}_k)}$ 
\end{proof}
%%%%%%%%%%%%%%%%%%%%%%%%%%%%
\begin{rem}\label{MorPi}
Observe que  
$$\{\Pi_{(\beta_{rs}, \overline{\alpha}_k)}: \beta_{rs} \in I_k(n-3,
2n-2)\} \subseteq \{\Pi_{\alpha_{rs}}: \alpha_{rs} \in I(n-2,
2n)\}$$ para todo $1\leq r< s \leq 2n-2$.
\end{rem}
\begin{lem}\label{lem4}If $E$ is a symplectic vector space of dimension 4, defined on an arbitrary field. Then the
unique set of linear functionals  $h\in (\wedge^2 E)^* $ that annuls
 $L(2, E)$ are of the form $h=A\Pi $
with $A$ on scalar and $\Pi=X_{14} + X_{23}$. 
\end{lem}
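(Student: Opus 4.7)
The plan is to combine Lemma \ref{indiceinsupp} with the evaluation of $h$ on one carefully chosen Lagrangian plane. For $n=2$ one has $2n+1 = 5$, so the only length-$2$ indices $(i,j)$ with $1 \leq i < j \leq 4$ and $i+j = 5$ are $(1,4)$ and $(2,3)$. Writing the generic $h = \sum_{\alpha \in I(2,4)} A_\alpha X_\alpha$, Lemma \ref{indiceinsupp} forces every nonzero coefficient $A_{\overline{\alpha}}$ to have $\text{supp}\{\overline{\alpha}\}$ contain such a pair, so necessarily $A_{12} = A_{13} = A_{24} = A_{34} = 0$ and
\[
h = A_{14} X_{14} + A_{23} X_{23}.
\]
This reduces the whole problem to showing $A_{14} = A_{23}$.

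To extract that one remaining relation, I would test $h$ on an explicit Lagrangian plane whose Pl\"ucker coordinates $P_{14}$ and $P_{23}$ are both nonzero. The plane $W$ spanned by $e_1 + e_2$ and $e_3 - e_4$ works: by \ref{sympbase1},
\[
\langle e_1 + e_2,\, e_3 - e_4 \rangle = -\langle e_1, e_4 \rangle + \langle e_2, e_3 \rangle = -1 + 1 = 0,
\]
so $W \in L(2,E)$, and
\[
(e_1 + e_2) \wedge (e_3 - e_4) = e_1 \wedge e_3 - e_1 \wedge e_4 + e_2 \wedge e_3 - e_2 \wedge e_4
\]
yields $P_{14} = -1$, $P_{23} = 1$. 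The vanishing condition $h(W) = -A_{14} + A_{23} = 0$ then forces $A_{14} = A_{23}$, and setting $A = A_{14}$ one concludes $h = A(X_{14} + X_{23}) = A\,\Pi$.

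The real content of the argument is Lemma \ref{indiceinsupp}, which collapses the six-dimensional space of candidate functionals down to two; the remaining dimension is then killed by a single linear constraint read off from one isotropic plane. In this dimension no serious obstacle arises --- any Lagrangian plane that pairs both coordinate directions $(1,4)$ and $(2,3)$ non-trivially would do equally well, so the only thing to be careful about is tracking the sign in the Pl\"ucker expansion.
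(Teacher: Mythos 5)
Your proposal is correct and follows essentially the same route as the paper: the paper kills $A_{12},A_{13},A_{24},A_{34}$ by evaluating on the coordinate Lagrangians $e_{12},e_{13},e_{24},e_{34}$ (which is exactly the content of Lemma \ref{indiceinsupp} specialized to $n=2$), and then forces $A_{14}=A_{23}$ by one Lagrangian point with both relevant Pl\"ucker coordinates nonzero, namely $e_{12}+e_{14}-e_{23}+e_{34}$, where you instead use the explicit decomposable plane $(e_1+e_2)\wedge(e_3-e_4)$. The difference is only in the choice of witness; your sign bookkeeping ($P_{14}=-1$, $P_{23}=1$) is right, so the argument is sound.
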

\begin{proof}
Suppose that $h=A_{12}X_{12}+A_{13}X_{13}+A_{14}X_{14}+A_{23}X_{23}+A_{24}X_{24}+A_{34}X_{34}\in (\wedge^2E)^*$
and supongamos que $h(L(2, E))=0$ clearly we have to $\{e_{12}, e_{13}, e_{24}, e_{34} \}\subset L(2, E)$ so
$h(e_{12})=h(e_{13})=h(e_{24})=h(e_{34})=0$ and this $A_{12}=A_{13}=A_{24}=A_{34}=0$, it is also easy to see that $e_{12}+e_{14}-e_{23}+e_{34}\in L(2, E)$ because its coordinates satisfy the  equations \ref{lagrzeroset2}   more over 
\begin{align*}
h(e_{12}+e_{14}-e_{23}+e_{34})&=A_{12}\cdot 1+A_{14}\cdot 1-A_{23}\cdot 1+A_{34}\cdot 1\\
                                                   &=A_{14}-A_{23}\\
                                                   &=0
\end{align*}
 consequently $A_{14}=A_{23}$ and this implies that
$h=A(X_{14} + X_{23})$
where $A=A_{14} = A_{23}$ is a nonzero constant.
\end{proof}
\begin{lem}\label{indodd}
Let $\overline{E}$ a symplectic vector space of dimension $6$, let $j=1, \ldots, 6$ and $h_j=\sum_{i=1}^3A_{(P_i, j)}X_{(P_i, j)}\in(\wedge^3\overline{E})^*$ where $A_{(P_i, j)}\neq 0$ if $|supp \{ P_i, j\}|=3$\\
If $h_j(L(3, \overline{E}))=0$ then $A_{(P_i, j)}=A$ a nonzero constant for all $i=1, \ldots, 3$ and $j=1, \ldots, 6$
\end{lem}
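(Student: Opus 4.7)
The plan is to reduce to the four-dimensional base case in Lemma \ref{lem4} via the injection $\xi$ and Lemma \ref{h0null}. Fix $j\in\{1,\ldots,6\}$ and set $\ell=\langle e_j\rangle$, which is an isotropic line since $\langle e_j,e_j\rangle=0$. Because $\mathcal{U}(\ell)\subseteq L(3,\overline{E})$, the hypothesis $h_j(L(3,\overline{E}))=0$ immediately yields $h_j(\mathcal{U}(\ell))=0$.

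Next, every nonzero monomial $X_{(P_i,j)}$ of $h_j$ has index of the form $(\beta,j)$ with $\beta=P_i\subseteq[6]\setminus\{j,7-j\}$, so $\beta\in I_j(2,4)$. Therefore $h_j\in\mathrm{Img}\,\xi$ (taking $\overline{\alpha}_k=j$), and by Lemma \ref{h0null} the associated $(h_j)'_0\in(\wedge^{2}(\ell^{\perp}/\ell))^*$ vanishes on $L(2,\ell^{\perp}/\ell)$. The quotient $\ell^{\perp}/\ell$ is a four-dimensional symplectic space with basis $\{e_i+\ell: i\in[6]\setminus\{j,7-j\}\}$, and the induced form satisfies $\langle e_a+\ell,e_b+\ell\rangle=1$ exactly when $a+b=7$. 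After relabelling the four surviving basis vectors in increasing order as $f_1,f_2,f_3,f_4$, one obtains the standard symplectic structure used in Lemma \ref{lem4}, with pairs $\{f_1,f_4\}$ and $\{f_2,f_3\}$.

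Applying Lemma \ref{lem4} then gives $(h_j)'_0=A(X_{f_1f_4}+X_{f_2f_3})$ for a single scalar $A$, and lifting through $\xi$, which preserves coefficients by construction, forces the two valid coefficients of $h_j$, namely $A_{(P_{i_1},j)}$ and $A_{(P_{i_2},j)}$, to be equal to $A$; here $P_{i_1}$ and $P_{i_2}$ are the two pairs of $\Sigma_3$ other than the one containing $j$. The standing hypothesis that these coefficients are nonzero then forces $A\neq 0$. The only delicate point is the explicit identification of the induced symplectic form on $\ell^{\perp}/\ell$ with the standard one of Lemma \ref{lem4}; this amounts to careful bookkeeping once the surviving basis vectors are relabelled in increasing order.
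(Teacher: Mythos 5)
Your proof is correct and takes essentially the same route as the paper: both fix the isotropic line $\ell=\langle e_j\rangle$ and reduce to the four-dimensional situation through ${\mathcal U}(\ell)=L(2,\ell^{\perp}/\ell)\wedge e_j$. The only difference is in execution: you invoke Lemma \ref{h0null} and the uniqueness statement of Lemma \ref{lem4} (after checking, correctly, that deleting a complementary pair from $\{1,\ldots,6\}$ always leaves the outer and inner pairs, so the sorted relabelling gives the standard form), whereas the paper evaluates $h_j$ directly on the explicit element $e_{23}+e_{25}-e_{34}+e_{45}$ of $L(2,\ell^{\perp}/\ell)$, in effect redoing the computation contained in the proof of Lemma \ref{lem4}.
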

\begin{proof}
Let $\{e_1, \ldots, e_6 \}$ a symplectic basis that satisfies \ref{sympbase1},
without loss of generality it is enough to do it for $j=1$, then  
\begin{align*}
h_{(P_{\theta},1)}&=A_{( P_2, 1)}X_{ (P_2, 1)}+A_{ (P_3, 1)}X_{ (P_3, 1)}\\
                          &=(A_{P_2}X_{ P_2 }+A_{ P_3 }X_{ P_3 })\wedge X_1 \\
                          &=h_{P_{\theta}}\wedge X_1
\end{align*}
where $h_{P_{\theta}}=A_{P_2}X_{ P_2 }+A_{ P_3 }X_{ P_3 }$.
 If  $E=\langle e_2, e_5\rangle \bot \langle e_3, e_4\rangle$ a symplectic vector subspace of
$\overline{E}$ of dimension $4$.
Let $\ell=\langle e_1\rangle\subset \overline{E}$ be the simplectic line then  ${\mathcal{U}}(\ell)= L(2, \ell^{\bot}/\ell)\wedge e_1$ \\
Note that $L(2, \ell^{\bot}/\ell)$ is given by the homogeneous equations
\begin{align*}\label{lagrzeroset022} 
\begin{cases}
X_{23}X_{45}-X_{24}X_{35}-X_{34}X_{25}&=0\\
X_{25}+X_{34}&=0
\end{cases}
\end{align*}
so $w=e_{23}+e_{25}-e_{34}+e_{45}\in L(2, \ell^{\bot}/\ell)$, and $h(w)=A_{(P_2, 1)}-A_{(P_3, 1)}=0$
then $A_{(P_2, 1)}=A_{(P_3, 1)}=A$ a nonzero constant.
\end{proof}
\begin{prop}\label{morphhom}
Let $E$ be a simplectic vector space of dimension $2(n+1)$ and
 $P_{\theta}\in C_{\lfloor\frac{n-1}{2}\rfloor}(\Sigma_{n+1})$. Let  $h_{P_{\theta}}=\Sigma_{i=1}^{n+1}A_{P_{\theta}P_i}X_{P_{\theta}P_i}\in (\wedge^{n+1} E)^*$ is as in \ref{funtional 12} and
 $h_{P_{\theta}}(L(n+1, E))=0$ then $A_{P_{\theta}P_i}=A$ 
 a nonzero constant.
\end{prop}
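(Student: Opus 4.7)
The plan is to verify, for each pair $P_i$ appearing in the sum, that $A_{P_\theta P_i}$ coincides with a common value by exhibiting an explicit Lagrangian $W_i$ whose Pl\"ucker coordinates isolate only two pair-supported indices: $(P_\theta,P_i)$ and a fixed reference $(P_\theta,P_{i_0})$. This reduces the global problem of matching all coefficients to a collection of two-term linear relations, which collectively force all coefficients to agree.

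Fix a reference $P_{i_0}\in \Sigma_{n+1}\setminus P_\theta$ and write $r=\lfloor(n-1)/2\rfloor$. For each $P_i\in \Sigma_{n+1}\setminus(P_\theta\cup\{P_{i_0}\})$, set $U_i=\mathrm{span}(P_\theta\cup P_i)$ and $V_i=U_i^{\perp}$, both symplectic subspaces of $E$ of dimension $n+1$. Decompose $V_i=\mathrm{span}(P_{i_0})\oplus V_i'$ where $V_i'$ is spanned by the remaining pairs. I would define $A_i:U_i\to V_i$ by two blocks: on $\mathrm{span}(P_i)$ put $A_ie_i=e_{i_0}$ and $A_ie_{2n+3-i}=-e_{2n+3-i_0}$; on $\mathrm{span}(P_\theta)$ choose an anti-symplectomorphism onto $V_i'$, which exists because both sides are $2r$-dimensional hyperbolic symplectic spaces. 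The subspace $W_i=\{u+A_iu:u\in U_i\}$ is then Lagrangian in $E$: since $V_i\perp U_i$, the isotropy check collapses to $\langle A_iu,A_iv\rangle_{V_i}=-\langle u,v\rangle_{U_i}$, which holds block by block, and the cross terms between the two blocks of $A_i$ vanish because their images live in the symplectic-orthogonal summands $\mathrm{span}(P_{i_0})$ and $V_i'$ of $V_i$.

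Next, I would compute the relevant Pl\"ucker coordinates of $W_i$ using block minors of its $(n+1)\times 2(n+1)$ basis matrix. The minor at columns $(P_\theta,P_i)$ is block-diagonal of the form $I_{2r}\oplus I_2$, giving $P_{(P_\theta,P_i)}(W_i)=1$. The minor at columns $(P_\theta,P_{i_0})$ is block-diagonal of the form $I_{2r}\oplus\mathrm{diag}(1,-1)$, giving $P_{(P_\theta,P_{i_0})}(W_i)=-1$. For any other $P_k\notin\{P_i,P_{i_0}\}$, the $P_i$-rows contribute zero in the $P_k$-columns (their $A_i$-images lie in $\mathrm{span}(P_{i_0})$, which is disjoint from $P_k$), so that minor has a zero block and thus $P_{(P_\theta,P_k)}(W_i)=0$. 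Applying the hypothesis $h_{P_\theta}(W_i)=0$ collapses the sum to $A_{P_\theta P_i}-A_{P_\theta P_{i_0}}=0$. Letting $P_i$ range over $\Sigma_{n+1}\setminus(P_\theta\cup\{P_{i_0}\})$ yields a common value $A:=A_{P_\theta P_{i_0}}$, necessarily nonzero whenever $h_{P_\theta}\ne 0$.

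The main obstacle I anticipate is checking the Lagrangian condition on $W_i$ without invoking an abstract existence result: one must track the cross-pairings $\langle A_ie_\mu,A_ie_\nu\rangle$ for $\mu\in P_\theta$ and $\nu\in P_i$ and confirm they vanish using the orthogonality of $\mathrm{span}(P_{i_0})$ and $V_i'$ inside $V_i$, together with the requirement that $A_i|_{\mathrm{span}(P_\theta)}$ pulls the symplectic form of $V_i'$ back to the negative of the symplectic form on $\mathrm{span}(P_\theta)$. The degenerate case $r=0$, where $P_\theta=\emptyset$ and $\dim E=4$, reduces the block construction to the explicit Lagrangian used in Lemma~\ref{lem4}, providing the natural base of the argument.
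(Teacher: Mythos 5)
Your construction is correct, but it proves the proposition by a genuinely different route than the paper. The paper argues by induction on $n$: it writes $h_{(P_{\theta},j)}=h_{P_{\theta}}\wedge X_j$, restricts to the subset ${\mathcal U}(\ell)=L(n,\ell^{\perp}/\ell)\wedge e_j$ for an isotropic line $\ell=\langle e_j\rangle$, and reduces to the base cases of dimension $4$ and $6$ (Lemmas \ref{lem4} and \ref{indodd}); this reuses the same $\mathcal{U}(\ell)$-machinery that drives Corollary \ref{sumhs1} and Lemma \ref{UniP2}. You instead exhibit, for each pair $P_i$, an explicit Lagrangian $W_i$ (the graph of an anti-symplectic map $U_i\to U_i^{\perp}$ built block-by-block on symplectic pairs) whose only nonzero Pl\"ucker coordinates among the indices occurring in $h_{P_{\theta}}$ are at $(P_{\theta},P_i)$ and $(P_{\theta},P_{i_0})$, with values $1$ and $-1$; the vanishing of the remaining coordinates is immediate because the two rows coming from $P_i$ are identically zero on the columns of any other pair $P_k$. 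This is more elementary and self-contained: it needs no induction, no base-case lemmas, and works verbatim over any field, at the price of not producing the intermediate statements the paper recycles later.

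One step you should make explicit is the sign of the minor at $(P_{\theta},P_{i_0})$: the submatrix is ``block-diagonal'' only after reordering interleaved rows and columns, and the determinant equals $-\operatorname{sgn}(\sigma)$, where $\sigma$ is the permutation matching the sorted index set $\operatorname{supp}(P_{\theta})\cup\{i,2n+3-i\}$ to the sorted set $\operatorname{supp}(P_{\theta})\cup\{i_0,2n+3-i_0\}$. If $\operatorname{sgn}(\sigma)$ could be $-1$ your relation would read $A_{P_{\theta}P_i}=-A_{P_{\theta}P_{i_0}}$ and the argument would collapse, so this is not cosmetic. Fortunately $\sigma$ is always even: an element $t\in\operatorname{supp}(P_{\theta})$ lies strictly between $i$ and $i_0$ exactly when its partner $2n+3-t$ lies strictly between $2n+3-i$ and $2n+3-i_0$, so crossings occur in pairs. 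With that observation added (and the remark that the hypothesis is nonvacuous only when the indices $(P_{\theta},P_i)$ have full support $n+1$, i.e.\ in the parity for which \ref{funtional 12} applies, the other parity being the $h_{(P_{\theta},j)}$ form treated separately in the paper), your proof is complete.
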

\begin{proof} 
The proof is by induction on $n$.\\
$I)$ The cases  $n=2$ y $n=3$  were proved in lemma \ref{lem4} and lemma \ref{indodd} respectively .\\
$II)$ Induction hypothesis: Let $E$ simplectic vector space of dimension $n$.\\
Let $P_{\theta}\in C_{\lfloor\frac{n-2}{2}\rfloor}(\Sigma_n)$ and let $h_{P_{\theta}}=\Sigma_{i=1}^nA_{P_{\theta}P_i}X_{P_{\theta}P_i}\in (\wedge^n E)^*$  if $h_{P_{\theta}}(L(n, E))=0$ then $A_{P_{\theta}P_i}=A$ a nonzero constant for each  $i\in \{1, \ldots, n \}$ such that  $|supp\{(P_{\theta}, P_i) \}|=\lfloor\frac{n}{2} \rfloor$.\\
$III)$ $a)$ Suppose that the induction hypothesis holds for $n$ even integer.\\
 Let $E$ simplectic vector space of dimension $2(n+1)$,  $1\leq j \leq n+1$ and\\ 
 $(P_{\theta}, j)=(P_{\theta_1}, \ldots, P_{\theta_{\frac{n-2}{2}}}, j)\in C_{\frac{n-2}{2}}(\Sigma_{n+1}-\{P_j\})\times \{ j \}$.\\
 Let $h_{(P_{\theta}, j)}=\sum_{i=1}^{n+1}A_{(P_{\theta_1}, \ldots, P_{\theta_{\frac{n-2}{2}}}, j, P_i)}X_{(P_{\theta_1}, \ldots, P_{\theta_{\frac{n-2}{2}}}, j, P_i)}\in (\wedge^{n+1} E)^*$ such that \\
 $h_{(P_{\theta}, j)}(L(n+1, E))=0$ 
\begin{align*}
h_{(P_{\theta}, j)}&=\sum_{i=1, i\neq j}^{n+1}A_{(P_{\theta_1}, \ldots, P_{\theta_{\frac{n-2}{2}}}, j, P_i)}X_{(P_{\theta_1}, \ldots, P_{\theta_{\frac{n-2}{2}}}, j, P_i)}\\
                         &=\big(\sum_{i=1, i\neq j}^{n+1}A_{(P_{\theta_1}, \ldots, P_{\theta_{\frac{n-2}{2}}}, j, P_i)}X_{(P_{\theta_1}, \ldots, P_{\theta_{\frac{n-2}{2}}}, P_i)}\big)\wedge X_j\\
                         &= h_{P_{\theta}}\wedge X_j
\end{align*}
then
\begin{equation}\label{hptheta}
h_{(P_{\theta}, j)}=h_{P_{\theta}}\wedge X_j
\end{equation}
where 
\begin{equation}\label{hptheta11}
h_{P_{\theta}}:= \sum_{i=1}^{n+1}A_{(P_{\theta_1}, \ldots, P_{\theta_{\frac{n-2}{2}}}, j, P_i)}X_{(P_{\theta_1}, \ldots, P_{\theta_{\frac{n-2}{2}}}, P_i)} 
\end{equation}
and \\
$P_{\theta}=(P_{\theta_1}, \dots, P_{\theta_{\frac{n-2}{2}}})\in C_{\frac{n-2}{2}}(\Sigma_{n+1}-\{P_j \})$.
Let $\ell=\langle e_j \rangle\subset E$  isotropic line, and as in \ref{Ulexterior}  we have 
\begin{equation}\label{hp11theta11}
{\mathcal{U}}(\ell)=L(n, \ell^{\bot}/\ell) \wedge e_j \subseteq L(n+1, E)
\end{equation}
clearly $L(n, \ell^{\bot}/\ell))\wedge e_j=L(n+1, E)\cap (\wedge^n \ell^{\bot}/\ell)\wedge e_j$ for the symplectic vector space $\ell^{\bot}/\ell$, of dimension $2n$ y $h_{P_{\theta}}\in (\wedge^n \ell^{\bot}/\ell)^*$. Moreover using \ref{hptheta} and \ref{hp11theta11} we have
\begin{align*}
h_{P_{\theta}}(L(n, \ell^{\bot}/\ell))&=h_{P_{\theta}}(L(n, \ell^{\bot}/\ell))\cdot X_j(e_j)\\
                                                     &=(h_{P_{\theta}}\wedge X_j)(L(n, \ell^{\bot}/\ell))\wedge e_j)\\
                                                     &=h_{(P_{\theta},j)}(L(n+1, E)\cap (\wedge^n \ell^{\bot}/\ell)\wedge e_j)\\
                                                     &=0                                                          
\end{align*}
then $h_{P_{\theta}}(L(n, \ell^{\bot}/\ell))=0$ and so by induction hypothesis, $A_{(P_{\theta_1}, \ldots, P_{\theta_{\frac{n-2}{2}}}, j, P_i)}=A$
a nonzero constant.\\
$b)$ Suppose that the induction hypothesis holds for $n$ odd.
Let $E$ simplectic vector space of dimension $2(n+1)$, 
let $P_{\theta}=(P_{\theta_1}, \ldots P_{\theta_{\frac{n-1}{2}}})\in C_{\frac{n-1}{2}}(\Sigma_{n+1})$.\\
Let  $h_{P_{\theta}}=\sum_{i=1}^{n+1}A_{(P_{\theta_1}, \ldots P_{\theta_{\frac{n-1}{2}}}, P_i)}X_{(P_{\theta_1}, \ldots P_{\theta_{\frac{n-1}{2}}}, P_i)}\in (\wedge^{n+1}E)^*$ \\
Then
\begin{align*}
h_{P_{\theta}}&=\sum_{i=1}^{n+1}A_{(P_{\theta_1}, \ldots P_{\theta_{\frac{n-1}{2}}}, P_i)}X_{(P_{\theta_1}, \ldots P_{\theta_{\frac{n-1}{2}}}, P_i)}\\
                      &=(\sum_{i=1}^{n+1}A_{(P_{\theta_1}, \ldots P_{\theta_{\frac{n-1}{2}}}, P_i)}X_{(P_{\theta_2}, \ldots P_{\theta_{\frac{n-1}{2}}}, \theta_1,  P_i)})\wedge X_{2n-\theta_1+1}
\end{align*}
Let $(P_{\epsilon}, \theta_1):=(P_{\theta_2}, \ldots, P_{\theta_{\frac{n-1}{2}}}, \theta_1)\in C_{\frac{n-3}{2}}(\Sigma_{n+1}-\{P_1 \})\times \{\theta_1 \}$, then
\begin{equation}\label{seconpthet}
h_{P_{\theta}}=h_{(P_{\epsilon}, \theta_1)}\wedge X_{2n-\theta_1+1}
\end{equation}
where 
\begin{equation}\label{s11econpthe}
h_{(P_{\epsilon}, \theta_1)}:=\sum_{i=1, i\neq \theta_1}^{n+1}A_{(P_{\theta_1}, \ldots P_{\theta_{\frac{n-1}{2}}}, P_i)}X_{(P_{\theta_2}, \ldots P_{\theta_{\frac{n-1}{2}}}, \theta_1, P_i)}\in(\wedge^n\ell^{\bot}/\ell)^*
\end{equation}
if $\ell=\langle e_{2n-\theta_1+1}\rangle$ then $h_{(P_{\epsilon}, \theta_1)}\in(\wedge^n\ell^{\bot}/\ell)^*$ and \\
\begin{equation}\label{seconpthe}
{\mathcal{U}}(\ell)=L(n, \ell^{\bot}/\ell) \wedge e_{\theta_1}=L(n+1, E)\cap (\wedge^n \ell^{\bot}/\ell)\wedge e_{\theta_1}
\end{equation}
where $\ell^{\bot}/\ell$ is the symplectic vector space of dimension $2n$.\\
Also using \ref{seconpthet} and \ref{seconpthe}
\begin{align*}
h_{(P_{\epsilon}, \theta_1)}(L(n, \ell^{\bot}/\ell))&=h_{(P_{\epsilon}, \theta_1)}(L(n, \ell^{\bot}/\ell))\cdot X_{2n-\theta_1+1}( e_{2n-\theta_1+1})\\
                                           &=(h_{(P_{\epsilon}, \theta_1)}\wedge X_{2n-\theta_1+1})(L(n, \ell^{\bot}/\ell)\wedge e_{2n-\theta_1+1})\\
                                           &=h_{P_{\theta}}(L(n+1, E)\cap (\wedge^n \ell^{\bot}/\ell)\wedge e_{2n-\theta_1+1})\\
                                           &=0
\end{align*}
then $h_{(P_{\epsilon}, \theta_1)}$, satisfies the conditions of the induction hypothesis  and so $A_{(P_{\theta_1}, \ldots P_{\theta_{\frac{n-1}{2}}}, P_i)}=A$ a nonzero constant.
\end{proof} 
\begin{cor}\label{sumhs1}
Let $E$ be a simplectic vector space of dimension $2n$.\\
$a)$ Let $h_{P_{\theta}}=\sum_{i=1}^{n}A_{P_{\theta, P_i}}X_{P_{\theta, P_i}}\in (\wedge^n E)^*$ as in \ref{funtional 12} such that
$h_{P_{\theta}}(L(n, E))=0$  then $h_{P_{\theta}}\in \langle \Pi_{\alpha_{rs}}: \alpha_{rs}\in I(n-2, 2n) \rangle$.\\
$b)$ With the notation of \ref{funtional 1} let $h=\sum_{P_{\theta}\in C_{\lfloor\frac{n-2}{2}\rfloor}(\Sigma_n)} h_{P_{\theta}}\in (\wedge^n E)^*$, such that $h(L(n, E))=0$ then  $h\in \langle \Pi_{\alpha_{rs}}:  \alpha_{rs}\in I(n-2, 2n) \rangle_{{\mathbb F}}$
\end{cor}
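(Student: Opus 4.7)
Part (a) is essentially an immediate consequence of Proposition \ref{morphhom}. Under the hypothesis $h_{P_{\theta}}(L(n,E))=0$, that proposition yields that all coefficients $A_{P_{\theta}P_i}$ with $|\mathrm{supp}\{(P_{\theta},P_i)\}|=n$ coincide with a common nonzero constant $A\in{\mathbb F}$. On the other hand, by \ref{FuncLinPI}, if $\alpha_{rs}\in I(n-2,2n)$ denotes the increasing reordering of the $n-2$ indices forming $\mathrm{supp}(P_{\theta})$, then the scalar $c_{i,\alpha_{rs},2n-i+1}$ equals $1$ exactly when the pair $P_i=(i,2n-i+1)$ is disjoint from $\mathrm{supp}(\alpha_{rs})$; equivalently, when $|\mathrm{supp}\{(P_{\theta},P_i)\}|=n$. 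Thus $h_{P_{\theta}}=A\,\Pi_{\alpha_{rs}}\in\langle \Pi_{\alpha_{rs}}:\alpha_{rs}\in I(n-2,2n)\rangle_{\mathbb F}$, which is precisely the statement of (a).

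For part (b), the plan is to reduce to (a) by showing that each summand $h_{P_{\theta}}$ in the decomposition $h=\sum_{P_{\theta}}h_{P_{\theta}}$ vanishes on $L(n,E)$ individually; once this is in hand, applying (a) to each $P_{\theta}$ separately and summing gives $h=\sum_{P_{\theta}}A_{P_{\theta}}\,\Pi_{\alpha_{rs}(P_{\theta})}\in\langle\Pi_{\alpha_{rs}}\rangle_{\mathbb F}$. To isolate a fixed $h_{P_{\theta_0}}$, I would pick an index $k\in\mathrm{supp}(P_{\theta_0})$, form the isotropic line $\ell=\langle e_k\rangle$, and use Lemma \ref{lem2} to split $h=h'+h''$ with $h''({\mathcal U}(\ell))=0$ and $h'$ supported on the Plücker indices containing $k$. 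Then Lemma \ref{h0null} transports $h'$ to a functional $h'_0$ on $\wedge^{n-1}(\ell^{\perp}/\ell)$ vanishing on $L(n-1,\ell^{\perp}/\ell)$, whose general form is again of the type \ref{funtional 12}--\ref{funtional 13} of Theorem \ref{funtiform} but in one lower dimension. An induction on $n$---with base cases $n=2$ supplied by Lemma \ref{lem4} and $n=3$ by Lemma \ref{indodd}---driven by the rigidity supplied by Proposition \ref{morphhom} at each stage, should close the argument.

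The main obstacle is the nontrivial overlap among the supports of the different $h_{P_{\theta}}$'s: a Plücker coordinate $X_{\alpha}$ with $\mathrm{supp}(\alpha)\subseteq\Sigma_n$ (i.e.\ $\alpha$ a union of pairs) belongs to several summands $h_{P_{\theta}}$ simultaneously, one for each way of removing a pair from $\alpha$. Consequently no single choice of $\ell=\langle e_k\rangle$ selects only one $h_{P_{\theta_0}}$: the restriction $h|_{{\mathcal U}(\ell)}$ always bundles together the contributions of every $P_{\theta}$ containing $P_{k^*}$ (where $k^*=\min(k,2n-k+1)$), together with a single term $A_{P_{\theta}P_{k^*}}X_{(P_{\theta},P_{k^*})}$ coming from each $P_{\theta}$ \emph{not} containing $P_{k^*}$. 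The inductive step therefore has to keep careful track of how these mixed contributions recombine under the transport of Lemma \ref{h0null}, and repeatedly invoke Proposition \ref{morphhom} to force the relevant coefficients to be constant before assembling the $A_{P_{\theta}}$'s that exhibit $h$ as a linear combination of the $\Pi_{\alpha_{rs}}$.
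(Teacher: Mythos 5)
Part (a) of your proposal is correct and is exactly the paper's argument: Proposition \ref{morphhom} forces all coefficients $A_{(P_{\theta},P_i)}$ to equal one constant $A$, so $h_{P_{\theta}}=A\,\Pi_{P_{\theta}}$, which lies in $\langle \Pi_{\alpha_{rs}}\rangle$.

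For part (b), however, what you give is a plan with the decisive step missing, and the sub-goal you set yourself is not even attainable as stated. You propose to show that each summand $h_{P_{\theta}}$ annihilates $L(n,E)$ individually and then to apply (a) termwise; but since the monomials $X_{\beta}$ with $\mathrm{supp}\{\beta\}\subset\Sigma_n$ are shared among several summands, vanishing of the sum does not constrain the individual summands. For instance, with $n=4$ take $h_{P_1}=X_{(P_1,P_2)}$, $h_{P_2}=-X_{(P_2,P_1)}=-X_{(P_1,P_2)}$ and all other $h_{P_a}=0$: then $h=0$ vanishes on $L(4,E)$ while $h_{P_1}$ does not. So the reduction ``each summand vanishes, then sum'' cannot work without first regrouping the coefficients of $h$ per monomial, and this is precisely the bookkeeping you flag in your last paragraph and then leave undone (``should close the argument''); that bookkeeping \emph{is} the content of part (b), so the proposal has a genuine gap. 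It is also structured differently from what the paper does: the paper's proof of (b) is not an induction on $n$ and does not invoke Lemma \ref{lem2} or Lemma \ref{h0null} (those are reserved for functionals with a coefficient of type \ref{coefficient}, treated in Lemma \ref{UniP2}). Instead it is a one-step descent: for $\ell=\langle e_j\rangle$ (resp. $\ell=\langle e_{\theta_1}\rangle$) it identifies $h|_{\mathcal{U}(\ell)}$ with $h_{(P_{\theta},j)}=h_{P_{\theta}}\wedge X_j$ (resp. $h_{P_{\theta}}=h_{(P_{\epsilon},\theta_1)}\wedge X_{2n-\theta_1+1}$), deduces that the lower-dimensional factor annihilates $L(n-1,\ell^{\perp}/\ell)$, applies part (a) in $\ell^{\perp}/\ell$, and wedges back to express the piece as a combination of $\Pi_{(\epsilon_{rs},j)}$'s, hence $h\in\langle\Pi_{\alpha_{rs}}:\alpha_{rs}\in I(n-2,2n)\rangle_{\mathbb F}$. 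Your own observation that $h|_{\mathcal{U}(\ell)}$ in general bundles contributions from every $P_{\theta'}$ containing the chosen pair shows where care is needed in that identification; a complete write-up must either justify it (after normalizing the decomposition so each Pl\"ucker monomial carries a single well-defined coefficient of $h$) or replace it, and your proposal does neither.
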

\begin{proof}
$a)$ If $h_{P_{\theta}}(L(n, E))=0$ then by lemma \ref{morphhom} we have $h_{P_{\theta}}=A(\sum_{i=1}^{n} X_{P_{\theta}P_i})=A\Pi_{P_{\theta}}$ since all $A_{P_{\theta}P_i}=A$, so $h_{P_{\theta}}\in \langle \Pi_{\alpha_{rs}}: \alpha_{rs}\in I(n-2, 2n) \rangle$\\
$b)$ Let $(P_{\theta}, j)=(P_{\theta_1}, \ldots P_{\theta_{\frac{n-2}{2}}}, j)\in C_{\lfloor\frac{n-2}{2}\rfloor}(\Sigma_n)\times \{j\}$
let $\ell=\langle e_j \rangle\subset E$ isotropic line, so $h_{\big|{\mathcal{U}}(\ell)}=h_{(P_{\theta}, j)}$ and
clearly $h_{(P_{\theta}, j)}({\mathcal{U}}(\ell))=h({\mathcal{U}}(\ell))=0$ then  we have to exist $h_{P_{\theta}}\in (\wedge^{n-1}(\ell^{\bot}/\ell))^*$ as in \ref{hptheta11} such that $h_{(P_{\theta}, j)}=h_{P_{\theta}}\wedge X_j$
and $h_{P_{\theta}}(L(n-1, \ell^{\bot}/\ell))=0$ then by $a)$ from this corollary we have $h_{P_{\theta}}=\sum_{\epsilon_{rs}\in I(n-3, 2(n-1))}c_{\epsilon_{rs}}\Pi_{\epsilon_{rs}}$ so we have
\begin{align*}
h_{(P_{\theta}, j)}&=h_{P_{\theta}}\wedge X_j\\
                           &=\big(\sum_{\epsilon_{rs}\in I(n-3, 2n-2)}c_{\epsilon_{rs}}\Pi_{\epsilon_{rs}}\big)\wedge X_j\\
                           &=\sum_{(\epsilon_{rs}, j)\in I(n-3, 2n-2)\times \{j\}}c_{(\epsilon_{rs},j)}\Pi_{(\epsilon_{rs},j)}
\end{align*}
where $c_{(\epsilon_{rs}, j)}=c_{\epsilon_{rs}}$ so  then, $h_{(P_{\theta}, j)}\in \langle \Pi_{\alpha_{rs}}:  \alpha_{rs}\in I(n-2, 2n) \rangle$ \\
Now let $P_{\theta}=(P_{\theta_1}, \ldots, P_{\theta{\lfloor\frac{n-2}{2}\rfloor}})\in C_{\lfloor\frac{n-2}{2}\rfloor}(\Sigma_n)$ and let $\ell=\langle e_{\theta_{1} }\rangle\subset E$ isotropic line, then  we have $h_{|{\mathcal{U}}(\ell)}=h_{P_{\theta}}$ so $h_{P_{\theta}}({\mathcal{U}}(\ell))=h({\mathcal{U}}(\ell))=0$
and let  $(P_{\epsilon}, \theta_1):=(P_{\theta_2}, \ldots, P_{\theta_{\lfloor\frac{n-2}{2}}\rfloor}, \theta_1)\in C_{\lfloor \frac{n-3}{2}\rfloor}(\Sigma_n-\{P_{\theta_1} \})\times \{\theta_1 \}$ then let $h_{(P_{\epsilon}, \theta_{1})}\in (\wedge^{n-1}\ell^{\bot}/\ell)^*$ as in \ref{s11econpthe} such that  $h_{P_{\theta}}=h_{(P_{\epsilon}, \theta_1)}\wedge X_{2n-\theta_1+1}$ and so we have $h_{(P_{\epsilon}, \theta_1)}(L(n-1, \ell^{\bot}/\ell))=0$ where  $\ell^{\bot}/\ell$ is the symplectic vector space of dimension $2(n-1)$, then by $a)$ from this corollary we have \\
$h_{(P_{\epsilon}, \theta_1)}=\sum_{(\epsilon_{rs}, \theta_1)\in I(n-3, 2(n-1))\times \{\theta_1 \}}c_{(\epsilon_{rs}, \theta_1)}\Pi_{(\epsilon_{rs}, \theta_1)}$ so we have to
\begin{align*}
h_{P_{\theta}}&=h_{(P_{\epsilon}, \theta_1)}\wedge X_{2n-\theta_1+1}\\
                           &=(\sum_{(\epsilon_{rs}, \theta_1)\in I(n-4, 2n-2)\times \{\theta_1 \}}c_{(\epsilon_{rs}, \theta_1)}\Pi_{(\epsilon_{rs}, \theta_1)})\wedge X_{2n-\theta_1+1}\\
                           &=\sum_{(\epsilon_{rs}, P_1)\in I(n-2, 2n)}c_{(\epsilon_{rs}, P_1)}\Pi_{(\epsilon_{rs}, P_1)}
\end{align*}
where $c_{(\epsilon_{rs}, P_1)}=c_{(\epsilon_{rs}, \theta_1)}$ so then we have to $h_{P_{\theta}}\in \langle \Pi_{\alpha_{rs}}:  \alpha_{rs}\in I(n-2, 2n) \rangle_{{\mathbb F}}$ where  $\alpha_{rs}=(\epsilon_{rs}, \theta_1)$
and therefore $h\in \langle \Pi_{\alpha_{rs}}:  \alpha_{rs}\in I(n-2, 2n) \rangle_{{\mathbb F}}$
\end{proof}
\begin{lem}\label{UniP2}
Let $E$ be a symplectic space of dimension $2n$ defined on an arbitrary field ${\mathbb F}$. 
If $h \in (\wedge^n E)^*$ such that $h(L(n,E)) = 0$  
then $h\in \langle \Pi_{\alpha_{rs}}:  \alpha_{rs}\in I(n-2, 2n) \rangle_{{\mathbb F}}$ 
\end{lem}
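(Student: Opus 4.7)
The plan is to proceed by induction on $n$, with an inner induction on the cardinality of the support $\{\alpha \in I(n,2n) : A_\alpha \neq 0\}$ of $h$. The base case $n=2$ is exactly Lemma \ref{lem4}: it delivers $h = A(X_{14}+X_{23})$, and since $I(0,4) = \{\emptyset\}$ the functional $\Pi_{\emptyset} = X_{14}+X_{23}$ is the unique contraction relation, so $h \in \langle \Pi_{\alpha_{rs}} : \alpha_{rs} \in I(0,4) \rangle$.

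For the inductive step, assume the statement for all symplectic spaces of dimension strictly less than $2n$. Given $h$ with $h(L(n,E))=0$, Theorem \ref{funtiform} forces $h$ to be either of the form \ref{funtional 1} or to carry at least one nonzero ``bad'' coefficient $A_{(\overline{\alpha}_1,\ldots,\overline{\alpha}_{2\ell}, P_\theta)}$ as in \ref{coefficient}. In the first case, Corollary \ref{sumhs1}(b) delivers the conclusion at once. In the second case, I would pick any $\overline{\alpha}_k$ from the non-paired block and set $\ell = \langle e_{\overline{\alpha}_k}\rangle$. Lemma \ref{lem2} then produces a decomposition $h = h' + h''$ in which $h'$ lies in $\mathrm{Img}\,\xi$, is nonzero, and both summands vanish on $\mathcal{U}(\ell)$.

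Next, Lemma \ref{h0null} furnishes a preimage $h'_0 \in (\wedge^{n-1}\ell^\perp/\ell)^*$ that vanishes on $L(n-1,\ell^\perp/\ell)$. Since $\ell^\perp/\ell$ is symplectic of dimension $2(n-1)$, the outer inductive hypothesis yields $h'_0 = \sum_{\beta_{rs}} c_{\beta_{rs}} \Pi_{\beta_{rs}}$. Applying $\xi$ and invoking Remark \ref{MorPi} lifts this to $h' = \sum_{\beta_{rs}} c_{\beta_{rs}} \Pi_{(\beta_{rs},\overline{\alpha}_k)} \in \langle \Pi_{\alpha_{rs}} : \alpha_{rs} \in I(n-2, 2n) \rangle$. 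Because every $\Pi_{\alpha_{rs}}$ vanishes on $\ker f \supseteq L(n,E)$, so does $h'$, hence $h'' = h - h'$ still vanishes on $L(n,E)$; by construction the support of $h''$ avoids $\phi$ entirely. The inner induction hypothesis, applied to $h''$, then yields $h'' \in \langle \Pi_{\alpha_{rs}}\rangle$ and therefore $h = h' + h''$ lies in the same span.

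The principal obstacle is to ensure the inner induction strictly decreases support, i.e.\ that the chosen bad index $\overline{\alpha} = (\overline{\alpha}_1,\ldots,\overline{\alpha}_{2\ell}, P_\theta)$ really lies in $\phi$. This requires $\overline{\alpha}_k \in \mathrm{supp}(\overline{\alpha})$ (immediate) and $2n-\overline{\alpha}_k+1 \notin \mathrm{supp}(\overline{\alpha})$. The latter follows from the combinatorial structure imposed by \ref{coefficient}: the hypothesis $\overline{\alpha}_i + \overline{\alpha}_j \neq 2n+1$ precludes $2n-\overline{\alpha}_k+1$ from lying in the non-paired block, while $P_\theta \in C_{\lfloor (n-2\ell)/2\rfloor}(\Sigma_{\overline{\alpha}_1,\ldots,\overline{\alpha}_{2\ell}})$ explicitly removes the pair containing $\overline{\alpha}_k$ from the available pool, so $2n-\overline{\alpha}_k+1 \notin P_\theta$ either. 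Once this is verified, the strict drop $\mathrm{supp}(h'') \subsetneq \mathrm{supp}(h)$ is guaranteed and both inductions close cleanly.
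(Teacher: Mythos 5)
Your proposal is correct and follows essentially the same route as the paper's proof: the dichotomy from Theorem \ref{funtiform}, Corollary \ref{sumhs1} for the first case, and in the second case the decomposition of Lemma \ref{lem2} combined with Lemma \ref{h0null}, the induction on $n$ via $\ell^{\perp}/\ell$, and Remark \ref{MorPi}, then iterating on $h''$. Your inner induction on the support size, the verification that the bad index lies in $\phi$, and the observation that $h''$ still annihilates $L(n,E)$ because the $\Pi_{\alpha_{rs}}$ vanish on $\ker f$ simply make explicit what the paper compresses into ``one proceeds in the same way as before, and the process ends in a finite number of steps.''
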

\begin{proof}
If $h \in (\wedge^n E)^*$ is as in \ref{funtional 1} then the theorem follows from the corollary \ref{sumhs1}.\\ 
If $h \in (\wedge^n E)^*$ it is not like \ref{funtional 1} so $h$ has at least a nonzero coefficient of the form $A_{(\overline{\alpha}_1, \ldots, \overline{\alpha}_{2 l}, P_{\theta})}$, see \ref{coefficient},  in this case  the proof is given by induction in $n$.\\
If $n=2$ then  by lemma \ref{lem4} we have that the only functionals that cancel $L(2, E)$ are the $h=A(X_{14} + X_{23})$ and this gives our induction basis.
The induction step: for $F$ a simplectic vector space of dimension $2n-2$,  
and every polynomial $h\in (\wedge^{n-1}F)^*$ different from zero, which cancels  $L(n-1, F)$ then the polynomial    $h$ is a linear combination of polynomials in $\langle \Pi_{\alpha_{rs}}: \alpha_{rs}\in I(n-3, 2n-2)\rangle_{{\mathbb F}}$.\\
Let $h=\sum_{\alpha\in I(n, 2n)}A_{\alpha}X_{\alpha} \in (\wedge^n E)^*$  such that $h(L(n, E))=0$ and $A_{(\overline{\alpha}_1, \ldots, \overline{\alpha}_{2 l}, P_{\theta})}\neq 0$ a coefficient that satisfies \ref{coefficient}
let $\overline{\alpha}_k\in \{ \overline{\alpha}_1, \ldots,  \overline{\alpha}_{2k}\}$,  $\phi$ be as in \ref{phiSet} and $\ell =\langle e_{\overline{\alpha}}
\rangle\subseteq E$ isotropic line of $E$,  then by lemma \ref{lem2} we have that there are homogeneous polynomials  $h^{\prime}, h^{\prime\prime}\in (\wedge^n E)^*$  such that $h^{\prime}\neq 0$ and $h^{\prime}({\mathcal{U}}(\ell))=h^{\prime\prime}({\mathcal{U}}(\ell))=0$, with notation of \ref{recimaghprime},
by induction step and lemma \ref{h0null}  you have to
 $h_0^{\prime}=\sum_{\beta_{rs}\in I_{\overline{\alpha}_r}(n-3, 2n-2)}A_{\beta_{rs}}\Pi_{\beta_{rs}}$  this implies that $h^{\prime}=\xi(h_0^{\prime})=\sum_{\beta_{rs}\in I(n-3, 2n-2)}A_{(\beta_{rs}, \overline{\alpha}_r )}\Pi_{(\beta_{rs}, \overline{\alpha}_r )}$ and by Remark \ref{MorPi} you have to $h^{\prime}\in \langle \Pi_{\alpha_{rs}}: \alpha_{rs}\in I(n-2, 2n)  \rangle$. Now if $h^{\prime\prime}\neq 0$ and  $h^{\prime\prime}\notin \langle \Pi_{\alpha_{rs}} :\alpha_{rs}\in I(n-2, 2n) \rangle$ since $h^{\prime \prime}(L(n, E))=0$ then one proceeds in the same as before, and the process ends in a finite number of steps.
   \end{proof}
The proof of the following theorem follows directly from the lemma \ref{UniP2}
 \begin{thm}\label{UniP22}
 The Family of Linear Relations of Contraction (FLRC) of $L(n, E)$,  definition  \ref{relinpluker}, is the set $\{ \Pi_{\alpha_{rs}} : \alpha_{rs}\in I(n, 2n) \}$ where $ \Pi_{\alpha_{rs}}$ is as in  \ref{FuncLinPI}, and is the only family with this property.
  \end{thm}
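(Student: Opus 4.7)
My plan is to observe that Theorem~\ref{UniP22} is essentially a repackaging of Lemma~\ref{UniP2} together with the fact that each $\Pi_{\alpha_{rs}}$ actually does vanish on $L(n,E)$. So the proof amounts to combining a containment in each direction and then reading off the conclusion from Definition~\ref{relinpluker}.

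First, I would set $V := \{h \in (\wedge^n E)^* : h(L(n,E)) = 0\}$, the annihilator of $L(n,E)$ in the dual of $\wedge^n E$. By Definition~\ref{relinpluker}, the FRLC is (a minimal generating set of) $V$, up to linear combination. The theorem therefore reduces to the equality
\begin{equation*}
V \;=\; \langle \Pi_{\alpha_{rs}} : \alpha_{rs} \in I(n-2,2n) \rangle_{\mathbb{F}}.
\end{equation*}

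For the inclusion $\langle \Pi_{\alpha_{rs}} \rangle \subseteq V$, I would invoke the characterization of $\ker f$ recalled after \eqref{LGint} (from \cite[Proposition~6]{bib2}): a vector $w = \sum P_\alpha e_\alpha$ lies in $\ker f$ if and only if $\sum_{i=1}^n P_{i\,\alpha_{st}\,(2n-i+1)} = 0$ for every $\alpha_{st} \in I(n-2,2n)$. This says exactly that each $\Pi_{\alpha_{rs}}$, as defined in \eqref{FuncLinPI}, vanishes identically on $\ker f$. Since $L(n,E) = G(n,E) \cap \ker f \subseteq \ker f$ by \eqref{LGint}, every $\Pi_{\alpha_{rs}}$ vanishes on $L(n,E)$, hence lies in $V$.

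The reverse inclusion $V \subseteq \langle \Pi_{\alpha_{rs}} \rangle$ is precisely the content of Lemma~\ref{UniP2}, which is already available. Combining the two inclusions gives $V = \langle \Pi_{\alpha_{rs}} : \alpha_{rs} \in I(n-2,2n) \rangle_{\mathbb{F}}$. Finally, by Definition~\ref{relinpluker} the FRLC is the smallest family, up to linear combination, of homogeneous linear functionals in $(\wedge^n E)^*$ that vanish on $L(n,E)$; since the $\Pi_{\alpha_{rs}}$ generate $V$ and any family with this spanning/vanishing property must generate the same space $V$, the FRLC is, up to linear combination, exactly $\{ \Pi_{\alpha_{rs}} : \alpha_{rs} \in I(n-2,2n)\}$, and uniqueness is immediate.

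The only place where real work could hide is the uniqueness clause, but that follows formally once $V$ has been identified as a single fixed subspace of $(\wedge^n E)^*$: any two minimal generating sets for $V$ differ only by invertible linear combinations, which is exactly the equivalence built into Definition~\ref{relinpluker}. So the main obstacle has already been cleared by Lemma~\ref{UniP2}, and the remaining argument is a short assembly.
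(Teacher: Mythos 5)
Your proposal is correct and follows essentially the same route as the paper, which derives Theorem~\ref{UniP22} directly from Lemma~\ref{UniP2}, with the forward inclusion (each $\Pi_{\alpha_{rs}}$ vanishes on $L(n,E)$) already supplied by the characterization of $\ker f$ from \cite[Proposition~6]{bib2} and the relation $L(n,E)=G(n,E)\cap\ker f$ in \eqref{LGint}. Your write-up merely makes explicit the two inclusions and the uniqueness-up-to-linear-combination step that the paper leaves implicit in its one-line proof.
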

\qed
%%%%%%%%%%%%%%%%%%%%%%%%%%%%%%%%%%%%%%%%
\section{The pl\"ucker matrix of the Lagrangian-Grassmannian }\label{recMatriz2}
%%%%%%%%%%%%%%%%%%%%%%%%%%%%%%%%%%%%%%% %
\begin{defn}
Let  $E$  symplectic vector space of dimension $2n$ defined over an arbitrary field ${\mathbb F}$ arbitrary.
The  matrix
\begin{equation}\label{Bmatrixb0}
B_{L(n, E)}
\end{equation}
of order $C_{n-2}^{2n}\times C_{n}^{2n}$ associated to the linear equations system
 $\{\Pi_{\alpha_{rs}}: \alpha_{rs}\in I(n-2,2n)\}$ we will call you {\it the pl\"ucker matrix of the Lagrangian-Grassmannian}
 \end{defn}
 The main result of this section is the following theorem \ref{resinic000}:\\
 \begin{thm}\label{resinic000}
 Let  $E$  symplectic vector space of dimension $2n$ and let $r_n=\lfloor\frac{n+2}{2}\rfloor$,  then there exists a family $${\EuScript A}=\{ {\EuScript L}_{r_n}, {\EuScript L}_{r_n-1}, \ldots, {\EuScript L}_2 \}$$ of $(0, 1)$-matrices, regular, sparse such that
\begin{description}
\item[A) ] If $n\geq 4$ be an even integer and $1\leq k\leq r_n-2$,  then  
    $$B_{L(n,E)}={\EuScript L}_{r_n}\oplus \bigoplus_{k=1}^{r_n-2}\Big(\bigoplus_{\substack{1\leq a_1<\cdots<a_{2k}\leq 2n\\ a_i+a_j\neq 2n+1}}{\EuScript L}_{r_n-k}^{(a_1,\cdots, a_{2k})}\Big)$$
    where ${\EuScript L}_{r_n-k}^{(a_1,\cdots, a_{2k})}$ is a copy of ${\EuScript
        L}_{r_n-k}$, for each $1 \leq k \leq r_n-2$.
\item[B)] If  $n\geq 5$ be an odd integer  then  $B_{L(n,E)}$  
     $$B_{L(n,E)}={\EuScript L}_{r_n}^{n}\oplus\bigoplus_{k=1}^{r_n-2}
    \Bigg(\bigoplus_{\substack{1\leq a_1< a_2< \cdots < a_{2k+1}\leq 2n \\ a_i
        + a_j \neq 2n+1}} {\EuScript
        L}_{r_n-k}^{(a_1,a_2,\dots,a_{2k+1})}\Bigg)$$ 
        where ${\EuScript L}_{r_n-k}^{(a_1, a_2,\dots, a_{2k+1})}$ is a copy of ${\EuScript L}_{r_n-k}$
    for each $1\leq k\leq r_n-2$ and ${\EuScript L}_{r_n}^n = {\EuScript L}_{r_n} \oplus \cdots \oplus {\EuScript L}_{r_n}$ $n$-times.
\end{description}
 \end{thm}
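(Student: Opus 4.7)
The plan is to exhibit the block-diagonal form of $B_{L(n,E)}$ after sorting rows $\alpha_{rs}\in I(n-2,2n)$ and columns $\alpha\in I(n,2n)$ by their set of unpaired indices. For any multi-index $\gamma$, define
$U(\gamma):=\{a\in\mathrm{supp}(\gamma): 2n+1-a\notin\mathrm{supp}(\gamma)\}$. A direct count shows $|U(\gamma)|\equiv|\gamma|\pmod{2}$, and by construction $U(\gamma)$ contains no pair $\{j,2n+1-j\}$, so its elements satisfy $a_i+a_j\neq 2n+1$.

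The key observation, read from \ref{FuncLinPI}, is that $B_{L(n,E)}(\alpha_{rs},\alpha)=1$ precisely when $\alpha=\alpha_{rs}\cup P_i$ for some pair $P_i\in\Sigma_n$ disjoint from $\mathrm{supp}(\alpha_{rs})$; in that case $U(\alpha)=U(\alpha_{rs})$, since $\alpha$ is obtained from $\alpha_{rs}$ by adjoining a complete pair. Reordering rows and columns according to $U$ therefore makes $B_{L(n,E)}$ block-diagonal with a block $B_U$ for each admissible $U\subseteq[2n]$ of parity congruent to $n$.

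The next step is to identify each $B_U$ with an explicit incidence matrix. Once $U$ is fixed, every row of $B_U$ is of the form $U\cup\bigcup_j P_{\theta_j}$, where $\{P_{\theta_j}\}$ is a $\tfrac{n-2-|U|}{2}$-subset of $\Sigma_U:=\Sigma_n\setminus\{P_a:a\in U\}$; each column has the analogous form with a $\tfrac{n-|U|}{2}$-subset. The entry is $1$ iff the row subset is contained in the column subset. Since $|\Sigma_U|=n-|U|$ depends only on $|U|$, appending the fixed prefix $U$ to every index is exactly the cartesian incidence configuration of \ref{confcart}, so Lemma~\ref{incconfisomr} yields that all blocks with the same $|U|$ are isomorphic to a single canonical incidence matrix, which we denote ${\EuScript L}_{r_n-k}$ (with $k$ chosen so that $|U|=2k$ for $n$ even, $|U|=2k+1$ for $n$ odd). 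Standard subset counting then shows this matrix is a $(0,1)$-matrix, regular (fixed number of ones per row and per column), and sparse.

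Finally, counting admissible $U$ of each size produces the stated decompositions. In (A) the case $|U|=0$ contributes the unique block ${\EuScript L}_{r_n}$, while $|U|=2k$ with $k\ge 1$ is parametrised by tuples $1\le a_1<\cdots<a_{2k}\le 2n$ with $a_i+a_j\ne 2n+1$, yielding one copy ${\EuScript L}_{r_n-k}^{(a_1,\dots,a_{2k})}$ per tuple. In (B) the case $|U|=1$ organises into $n$ isomorphic copies of ${\EuScript L}_{r_n}$ (accounted for by ${\EuScript L}_{r_n}^{n}$), and $|U|=2k+1$ with $k\ge 1$ is parametrised analogously. The principal obstacle is the combinatorial bookkeeping: in particular, matching the indexing $\Sigma_{a_1,\dots,a_{2k}}$ from \ref{setSigma0011} with the subset-model of ${\EuScript L}_{r_n-k}$ via Lemma~\ref{incconfisomr}, and correctly counting the singleton unpaired-set blocks in the odd case to justify the exponent $n$ in ${\EuScript L}_{r_n}^{n}$.
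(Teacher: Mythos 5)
Your block-diagonalisation by the unpaired set $U(\gamma)$ is essentially the paper's own argument: $U$ is exactly the prefix $(a_1,\dots,a_{2k})$ in the partition of $I(n-2,2n)$ given in Lemma \ref{lem2.5}; your identification of each block with a containment incidence matrix is Remark \ref{remsalphars} combined with Lemma \ref{incconfisomr} and Lemmas \ref{isomconf1}, \ref{isomconf2}; and your reordering of rows is the injectivity of $\varphi$ (Lemma \ref{lemmaTri}) together with Corollary \ref{bloksmatrx}, while regularity and sparsity are Proposition \ref{princmatrx}. For part A) your outline is complete and agrees with the paper's proof step by step.

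The genuine gap is in part B), exactly at the point you defer as ``bookkeeping''. In your framework the singleton blocks are indexed by all unpaired singletons $U=\{j\}$ with $j\in[2n]$ (any $j$ whose partner $2n+1-j$ is absent from the support), and each such block is a copy of ${\EuScript L}_{r_n}$ with $C^{n-1}_{(n-3)/2}$ rows; hence your construction produces $2n$ such blocks, not $n$. Concretely, for $n=5$ there are $40$ rows $\alpha_{rs}\in I(3,10)$ whose support contains exactly one unpaired index, and under your sorting they fall into ten $4\times 6$ blocks, whereas ${\EuScript L}_{r_n}^{n}={\EuScript L}_3\oplus\cdots\oplus{\EuScript L}_3$ ($5$ times) accounts for only $20$ rows. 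So ``justifying the exponent $n$'' is not routine counting: the paper reaches the exponent $n$ by a different indexing, namely the configurations \ref{subsetconf122} with $j\in\{1,\dots,n\}$ built on all of $C_{\frac{n-1}{2}}(\Sigma_n)$ (Lemma \ref{isomconf2}, part a)), and those blocks are not the matrices your $U$-blocks produce, nor do the row counts coincide; you would have to either exhibit a regrouping of your $2n$ blocks that matches that description, or conclude that the singleton count must be corrected. As written, your proposal asserts a total that your own decomposition does not deliver, so part B) is not established. A smaller imprecision, shared with the paper, is that the columns $\alpha$ whose support contains no pair $P_i$ are zero columns of $B_{L(n,E)}$ and lie in no block, so the displayed equalities can hold only up to row/column permutation and these zero columns; this deserves one explicit sentence.
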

 %%%
\subsection{incidence configurations} 
Let $m\geq 2$ even  integer,  $r_m=\frac{m+2}{2}$ and $\Sigma_m=\{P_1, \ldots, P_m \}$
as in  \ref{setSigma0001}
\begin{equation}\label{Setconf1}
\big( C_{\frac{m}{2}}(\Sigma_m), \;S_{P_{\alpha}} \big)_{P_{\alpha}\in C_{\frac{m-2}{2}}(\Sigma_m)}
\end{equation} 
where $C_{\frac{m}{2}}(\Sigma_m)$ es un $C_{\frac{m}{2}}^{m}$-set and 
\begin{equation}\label{Setconf2}
S_{P_{\alpha}}=\{  P_{\beta}\in C_{\frac{m}{2}}(\Sigma_m): supp\{\alpha\}\subset supp\{ \beta\} \}
\end{equation}
a configuration of subsets of $C_{\frac{m}{2}}(\Sigma_m)$.
\begin{rem}\label{remeq}
Note que $S_{\alpha}=\{P_{\beta}\in C_{\frac{m}{2}}(\Sigma_m) : |supp\{\alpha\}\cap supp\{ \beta\}|=\frac{m-2}{2} \}$
\end{rem}
\subsection{Properties }
Let $m\geq 4$ even integer,  dada la configuracion de incidencia \ref{Setconf1}
$$\big( C_{\frac{m}{2}}(\Sigma_m), \;S_{P_{\alpha}} \big)_{P_{\alpha}\in C_{\frac{m-2}{2}}(\Sigma_m)}$$
 let $r_m=\frac{m+2}{2}$ then we have\\
\begin{lem}\label{onesroow}
 For all $P_{\alpha}\in C_{\frac{m-2}{2}}(\Sigma_m)$ we have $|S_{P_{\alpha}}|=r_m$ 
\end{lem}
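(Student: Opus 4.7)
The plan is to reduce the statement to a standard counting problem on subsets of the $m$-set $\Sigma_m=\{P_1,\ldots,P_m\}$. First I would use the identification \ref{indequal000} to regard $P_\alpha\in C_{(m-2)/2}(\Sigma_m)$ simply as the subset $\mathrm{supp}\{\alpha\}\subseteq\Sigma_m$ of cardinality $(m-2)/2$, and likewise regard each $P_\beta\in C_{m/2}(\Sigma_m)$ as a subset of $\Sigma_m$ of cardinality $m/2$. Under this identification, the defining condition $\mathrm{supp}\{\alpha\}\subset\mathrm{supp}\{\beta\}$ in \ref{Setconf2} (equivalently, $|\mathrm{supp}\{\alpha\}\cap\mathrm{supp}\{\beta\}|=(m-2)/2$, as recorded in Remark \ref{remeq}) expresses precisely that $P_\beta$ is an extension of $P_\alpha$ by one element.

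Next I would count these extensions directly. Fix $P_\alpha$; any $P_\beta\in S_{P_\alpha}$ is determined by the singleton $\mathrm{supp}\{\beta\}\setminus\mathrm{supp}\{\alpha\}$, which has cardinality
\[
\tfrac{m}{2}-\tfrac{m-2}{2}=1,
\]
and which may be chosen freely in $\Sigma_m\setminus\mathrm{supp}\{\alpha\}$. The map $P_\beta\mapsto\mathrm{supp}\{\beta\}\setminus\mathrm{supp}\{\alpha\}$ is therefore a bijection between $S_{P_\alpha}$ and $\Sigma_m\setminus\mathrm{supp}\{\alpha\}$, yielding
\[
|S_{P_\alpha}|=|\Sigma_m|-|\mathrm{supp}\{\alpha\}|=m-\tfrac{m-2}{2}=\tfrac{m+2}{2}=r_m.
\]

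There is no real obstacle here; the only delicate point is bookkeeping between the tuple notation $P_\alpha=(P_{\alpha_1},\ldots,P_{\alpha_{(m-2)/2}})$ and the underlying subset $\mathrm{supp}\{\alpha\}$, which is resolved by invoking \ref{indequal000}. The count is independent of the choice of $P_\alpha$, which immediately gives the regularity claim used later in the section.
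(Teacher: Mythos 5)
Your proof is correct and follows essentially the same route as the paper: both count the one-element extensions of $\mathrm{supp}\{\alpha\}$ inside $\Sigma_m$, giving $m-\tfrac{m-2}{2}=\tfrac{m+2}{2}=r_m$. Your write-up is in fact slightly cleaner, since the paper's displayed computation contains a harmless slip ($m-|\mathrm{supp}\{\alpha\}|$ is written as $m-m/2$) before arriving at the same value.
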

\begin{proof}
If $P_{\beta}\in S_{P_{\alpha}}$ and $|supp\{\alpha\}|=\frac{m-2}{2}$ then 
\begin{align*}
|S_{P_{\alpha}}|&=|\{\beta\in I(\frac{m}{2}, m): supp\{\beta \}=supp\{\alpha \}\cup\{ i\}\; with \; 1\leq i \leq m\}|\\
        &=|\{ i\in [m] : |supp\{\alpha\}\cup \{ i \}| =m/2\}|\\
        &=m-|supp\{\alpha\}|\\
        &=m-m/2\\
        &=\frac{m+2}{2}
\end{align*}
\end{proof}
\begin{lem}\label{lem1210}
Let $P_\alpha$ and $P_{\overline{\alpha}}$ two different elements of $C_{\frac{m-2}{2}}(\Sigma_m)$ then 
$$S_{P_{\alpha}}\cap S_{P_{\overline{\alpha}}}\neq \emptyset \;\; if\; and\; only\; if\;\; |supp\{\alpha \}\cap supp \{ \overline{\alpha}\}|=\frac{m-4}{2}$$
\end{lem}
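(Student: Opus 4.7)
The plan is to translate the combinatorial condition into set-theoretic language using the characterization in Remark \ref{remeq} and then apply inclusion–exclusion.

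Write $A := \text{supp}\{\alpha\}$ and $B := \text{supp}\{\overline{\alpha}\}$, both subsets of $[m]$ of cardinality $\frac{m-2}{2}$. By the definition \ref{Setconf2}, an element $P_\beta \in C_{\frac{m}{2}}(\Sigma_m)$ lies in $S_{P_\alpha}\cap S_{P_{\overline{\alpha}}}$ if and only if $C:=\text{supp}\{\beta\}$ satisfies $A\cup B\subseteq C$ and $|C|=\frac{m}{2}$. Consequently, the intersection $S_{P_\alpha}\cap S_{P_{\overline{\alpha}}}$ is non-empty if and only if $|A\cup B|\leq \frac{m}{2}$.

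Next I would apply inclusion–exclusion:
\begin{equation*}
|A\cup B| \;=\; |A|+|B|-|A\cap B| \;=\; (m-2) - |A\cap B|.
\end{equation*}
Hence $|A\cup B|\leq \frac{m}{2}$ is equivalent to $|A\cap B|\geq \frac{m-4}{2}$. On the other hand, since $P_\alpha\neq P_{\overline{\alpha}}$ and $A,B$ have the same cardinality, $A$ cannot be contained in $B$, so $|A\cap B|<|A|=\frac{m-2}{2}$, i.e.\ $|A\cap B|\leq \frac{m-4}{2}$. Combining the two bounds gives $|A\cap B|=\frac{m-4}{2}$.

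For the converse, if $|A\cap B|=\frac{m-4}{2}$, then $|A\cup B|=\frac{m}{2}$, so choosing $C:=A\cup B$ produces a $P_\beta\in C_{\frac{m}{2}}(\Sigma_m)$ witnessing $S_{P_\alpha}\cap S_{P_{\overline{\alpha}}}\neq\emptyset$. There is no real obstacle here; the only thing to be careful about is the parity/size bookkeeping that makes the bound $|A\cap B|\leq \frac{m-4}{2}$ tight precisely when $P_\alpha \neq P_{\overline{\alpha}}$, which is exactly the hypothesis of the lemma.
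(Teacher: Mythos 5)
Your proof is correct and takes essentially the same route as the paper: both reduce a common element $P_\beta$ of $S_{P_\alpha}\cap S_{P_{\overline{\alpha}}}$ to the condition that its $\frac{m}{2}$-element support contains $supp\{\alpha\}\cup supp\{\overline{\alpha}\}$, and then count. The only (cosmetic) difference is that the paper names the two extra elements $N,M$ explicitly and exhibits $supp\{\beta\}\setminus\{N,M\}=supp\{\alpha\}\cap supp\{\overline{\alpha}\}$, whereas you obtain the same count by inclusion--exclusion together with the distinctness bound $|supp\{\alpha\}\cap supp\{\overline{\alpha}\}|\leq\frac{m-4}{2}$.
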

\begin{proof}
$ \Rightarrow):$ Let $ P_{\beta}\in  S_{P_{\alpha}}\cap  S_{ P_{\overline{\alpha}}}$ then there are two different positive integers $N$ and $M$ such that
$supp \{ \alpha \} \cup \{ N\}=supp \{ \beta\} =supp \{ \overline{\alpha}\} \cup \{ M\}$, also 
$supp \{ \alpha\} - \{ M\}=supp \{ \beta\}-\{N, M \} =
supp \{\overline{\alpha}\} - \{ N\}$, as a consequence  we have to $supp \{\beta\}-\{N, M \}= supp \{\alpha\} \cap supp \{ \overline{\alpha}\}$ so
$|supp\{P_{\alpha}\}\cap supp\{ P_{\overline{\alpha}} \} |=|supp \{ P_{\beta}\}|-|\{N, M \}|=\frac{m-4}{2}$\\
$\Leftarrow):$ Suppose $ |supp\{ \alpha\}\cap supp\{\overline{\alpha} \} |=\frac{m-4}{2}$  then there exist $M$, $N$ distinct positive integers such that $\{M\}= supp \{ \alpha\}- supp \{ \alpha\}\cap supp \{\overline{ \alpha}\}$ and  $\{N\}= supp \{\overline{ \alpha}\}- supp \{ \alpha\}\cap supp \{\overline{ \alpha}\}$ and so it exists
 $P_{\beta}\in C_{\frac{m}{2}}(\Sigma_m)$ such that 
$supp \{ \beta\}=(supp \{ \alpha\}\cap supp \{ \overline{\alpha}\})\cup \{ N, M\}$ then it is easy to see 
$P_{\beta}\in S_{ P_{\alpha}}\cap  S_{ P_{\overline{\alpha}}}$ and so $S_{P_{\alpha}}\cap S_{P_{\overline{\alpha}}}\neq \emptyset$.
\end{proof}
\begin{cor}\label{canes1}
Let $P_{\alpha}$ and $P_{\overline{\alpha}}$ be two different of $C_{\frac{m-2}{2}}(\Sigma_n)$ then $|S_{P_{\alpha}}\cap S_{P_{\overline{\alpha}}}|\leq 1$
\end{cor}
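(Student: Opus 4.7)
The plan is to use Lemma \ref{lem1210} to reduce to the nontrivial case, and then show that when the intersection is nonempty, the element is uniquely determined by $P_\alpha$ and $P_{\overline{\alpha}}$ via a cardinality argument.

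First I would dispose of the trivial case: if $S_{P_\alpha} \cap S_{P_{\overline{\alpha}}} = \emptyset$, then the bound $0 \leq 1$ holds. So assume the intersection is nonempty, and invoke Lemma \ref{lem1210} to conclude $|\text{supp}\{\alpha\} \cap \text{supp}\{\overline{\alpha}\}| = \tfrac{m-4}{2}$. Since $|\text{supp}\{\alpha\}| = |\text{supp}\{\overline{\alpha}\}| = \tfrac{m-2}{2}$, inclusion-exclusion gives
\[
|\text{supp}\{\alpha\} \cup \text{supp}\{\overline{\alpha}\}| = \tfrac{m-2}{2} + \tfrac{m-2}{2} - \tfrac{m-4}{2} = \tfrac{m}{2}.
\]

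Next, take any $P_\beta \in S_{P_\alpha} \cap S_{P_{\overline{\alpha}}}$. By definition \eqref{Setconf2}, $\text{supp}\{\alpha\} \subseteq \text{supp}\{\beta\}$ and $\text{supp}\{\overline{\alpha}\} \subseteq \text{supp}\{\beta\}$, so $\text{supp}\{\alpha\} \cup \text{supp}\{\overline{\alpha\}} \subseteq \text{supp}\{\beta\}$. But $|\text{supp}\{\beta\}| = \tfrac{m}{2}$ since $P_\beta \in C_{m/2}(\Sigma_m)$, which matches the cardinality of the union. Therefore $\text{supp}\{\beta\} = \text{supp}\{\alpha\} \cup \text{supp}\{\overline{\alpha}\}$.

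By the equality convention \eqref{indequal000}, $P_\beta$ is uniquely determined by its support, hence the intersection $S_{P_\alpha} \cap S_{P_{\overline{\alpha}}}$ contains at most one element. No step looks to present any real obstacle — the whole argument is just unwinding the size count given by Lemma \ref{lem1210}, so this is essentially a one-line corollary.
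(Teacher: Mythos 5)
Your proof is correct and follows essentially the same approach as the paper: both arguments use Lemma \ref{lem1210} to pin down the support of any common element $P_{\beta}$ and then conclude uniqueness via the convention \eqref{indequal000}. Your version is a slightly cleaner packaging — identifying $\text{supp}\{\beta\}$ with the union $\text{supp}\{\alpha\}\cup\text{supp}\{\overline{\alpha}\}$ by inclusion--exclusion, rather than the paper's explicit comparison of the completing elements $\{N,M\}$ and $\{N',M'\}$ for two putative members of the intersection — but the substance is the same.
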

\begin{proof}
Be $ P_{\beta}\in S_{ P_{\alpha}}\cap  S_{P_{\overline{\alpha}}}$     
then there are two positive integers $N$ and $M$ different such that
$supp \{ \alpha \} \cup \{ N\}=supp \{ \beta\} =supp \{ \overline{\alpha}\} \cup \{ M\}$, also 
$supp \{ \alpha\} - \{ M\}=supp \{ \beta\}-\{N, M \} =
supp \{\overline{\alpha}\} - \{ N\}$, as a consequence we have to $supp \{\beta\}= (supp \{\alpha\} \cap supp \{ \overline{\alpha}\})\cup \{N, M\} $  clearly 
$(supp \{\alpha \}\cap supp \{ \overline{\alpha}\})\cup \{M\}\subset supp \{\alpha \}$
then for  lemma \ref{lem1210} we have 
$ |(supp\{  \alpha\}\cap supp\{ \overline{\alpha} \})\cup \{M\} |=\frac{m-4}{2}+1=\frac{m-2}{2}$ so  we have to $supp \{\alpha \}=(supp \{\alpha \}\cap supp \{ \overline{\alpha}\})\cup\{M \}$, analogously we have  to $supp \{\overline{\alpha} \}=(supp \{\alpha \}\cap supp \{ \overline{\alpha}\})\cup\{N\}$. Suppose there is another  $P_{\beta^{\prime}}\in S_{P_{\alpha}}\cap  S_{ P_{\overline{\alpha}}}$ then there exist $M^{\prime}$ y $N^{\prime}$ distinct positive integers such that  
$supp\{\beta^{\prime}\}=supp \{\alpha \}\cap supp \{ \overline{\alpha}\}\cup \{N^{\prime}, M^{\prime}\}$, as 
$supp\{\alpha \}\subseteq supp\{ \beta^{\prime}\}$ then $(supp \{\alpha\}\cap supp \{\overline{\alpha}\})\cup \{ M \} \subseteq (supp \{\alpha\}\cap supp \{\overline{\alpha}\})\cup \{  N^{\prime}, M^{\prime} \} $ then $M\in \{ N^{\prime}, M^{\prime}\}$. Analogamente $supp\{\overline{\alpha} \}\subseteq supp\{ \beta^{\prime}\}$ and so we have to $N\in \{ N^{\prime}, M^{\prime}\}$ so $\{N, M\}=\{N^{\prime}, M^{\prime}\}$ then  $supp\{\beta \}=supp\{\beta^{\prime}\}$ 
  so by \ref{indequal000} we have $ |S_{P_{\alpha}}\cap  S_{ P_{\overline{\alpha}}}|\leq 1$
\end{proof}
\begin{cor}\label{canes2}
If $S_{P_{\alpha}} = S_{P_{\overline{\alpha}}}$ then $P_{\alpha}=P_{\overline{\alpha}}$
\end{cor}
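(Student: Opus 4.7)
The plan is to prove Corollary \ref{canes2} by contrapositive, leveraging the intersection bound already established in Corollary \ref{canes1}. Specifically, I would assume $P_{\alpha}\neq P_{\overline{\alpha}}$ and derive a contradiction with $S_{P_{\alpha}} = S_{P_{\overline{\alpha}}}$.

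First I would recall the two ingredients: Lemma \ref{onesroow} tells us that $|S_{P_{\alpha}}| = |S_{P_{\overline{\alpha}}}| = r_m = \frac{m+2}{2}$, and Corollary \ref{canes1} tells us that whenever $P_{\alpha}$ and $P_{\overline{\alpha}}$ are distinct elements of $C_{\frac{m-2}{2}}(\Sigma_m)$ we have $|S_{P_{\alpha}} \cap S_{P_{\overline{\alpha}}}| \leq 1$. Since this corollary is developed for $m \geq 4$ even, we have $r_m \geq 3$, which is the key numerical fact that makes the contradiction work.

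Now suppose $S_{P_{\alpha}} = S_{P_{\overline{\alpha}}}$ but $P_{\alpha} \neq P_{\overline{\alpha}}$. Then on the one hand
\[
|S_{P_{\alpha}} \cap S_{P_{\overline{\alpha}}}| = |S_{P_{\alpha}}| = r_m \geq 3,
\]
while on the other hand Corollary \ref{canes1} forces $|S_{P_{\alpha}} \cap S_{P_{\overline{\alpha}}}| \leq 1$. This contradiction shows $P_{\alpha} = P_{\overline{\alpha}}$, which is the claim.

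The proof is essentially an immediate consequence of the preceding corollary together with the cardinality count from Lemma \ref{onesroow}, so there is no serious obstacle. The only thing to be mindful of is that the argument implicitly uses $m \geq 4$ (so that $r_m \geq 3 > 1$); this is consistent with the running hypothesis of the subsection, but is worth noting explicitly for clarity.
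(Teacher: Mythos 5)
Your proposal is correct and follows essentially the same argument as the paper: assume $P_{\alpha}\neq P_{\overline{\alpha}}$, note that equality of the sets forces $|S_{P_{\alpha}}\cap S_{P_{\overline{\alpha}}}|=|S_{P_{\alpha}}|=r_m\geq 3$ by Lemma \ref{onesroow} (using $m\geq 4$), and contradict the bound $|S_{P_{\alpha}}\cap S_{P_{\overline{\alpha}}}|\leq 1$ from Corollary \ref{canes1}. Your explicit remark about the hypothesis $m\geq 4$ is a small but welcome clarification of the same proof.
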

\begin{proof}
Suppose $S_{ P_{\alpha}} = S_{P_{\overline{\alpha}}}$ and  
$ P_{\alpha}\neq  P_{\overline{\alpha}}$ then 
 by corollary 
\ref{canes1} we have to $ |S_{ P_{\alpha}}\cap  S_{ P_{\overline{\alpha}}}|\leq 1$ however this is not possible since by lemma \ref{onesroow}, $ |S_{ P_{\alpha}}\cap  S_{ P_{\overline{\alpha}}}|=|S_{ P_{\alpha}}|=r_m$  
but $m\geq 4$ and so $r_m\geq 3$ which implies that $P_{\alpha}=P_{\overline{\alpha}}$
\end{proof}
We call $w_{ P_{\alpha}}$ the intersection count of the $S_{ P_{\alpha}}$  
\begin{lem}\label{unoscolumn}
$w_{P_{\alpha}}=r_m-1$
\end{lem}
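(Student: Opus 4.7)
The plan is to identify $w_{P_\alpha}$ as the common column weight of the incidence matrix attached to the configuration $\bigl(C_{m/2}(\Sigma_m),\, S_{P_\alpha}\bigr)_{P_\alpha \in C_{(m-2)/2}(\Sigma_m)}$. Concretely, fixing any element $P_\beta \in S_{P_\alpha}$, the quantity $w_{P_\alpha}$ records how many indices $P_{\overline{\alpha}} \in C_{(m-2)/2}(\Sigma_m)$ satisfy $P_\beta \in S_{P_{\overline{\alpha}}}$. Lemma \ref{onesroow} pinned down the row weight as $r_m$; the present claim is the complementary column-side statement that will underpin the regularity of $\mathcal{L}_{r_m}$ used in Theorem \ref{resinic000}.

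The computation itself is a one-step enumeration. From the defining condition \eqref{Setconf2}, one has $P_\beta \in S_{P_{\overline{\alpha}}}$ if and only if $\mathrm{supp}\{\overline{\alpha}\} \subset \mathrm{supp}\{\beta\}$. Because $|\mathrm{supp}\{\beta\}| = m/2$ while $|\mathrm{supp}\{\overline{\alpha}\}| = (m-2)/2$, counting admissible $P_{\overline{\alpha}}$ reduces to counting $(m-2)/2$-subsets of an $m/2$-set, giving
\[
\binom{m/2}{(m-2)/2} \;=\; \frac{m}{2} \;=\; \frac{m+2}{2} - 1 \;=\; r_m - 1.
\]

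The only subtlety worth flagging is that the resulting value is independent of the chosen $P_\beta \in S_{P_\alpha}$, as it depends solely on the cardinality $|\mathrm{supp}\{\beta\}| = m/2$. Hence $w_{P_\alpha}$ is well-defined and takes the constant value $r_m - 1$ across all row-indices, which is exactly the assertion. I do not foresee any genuine obstacle: the lemma is the dual counterpart of Lemma \ref{onesroow}, and the two together encode that $\mathcal{L}_{r_m}$ is a doubly regular $(0,1)$-matrix.
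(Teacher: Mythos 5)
Your proof is correct and follows essentially the same route as the paper: both fix an element $P_{\beta}$ and count the subsets $S_{P_{\overline{\alpha}}}$ containing it as the number of $\tfrac{m-2}{2}$-subsets of the $\tfrac{m}{2}$-set $\mathrm{supp}\{\beta\}$, giving $\binom{m/2}{(m-2)/2}=\tfrac{m}{2}=r_m-1$. Your added remark that the count is independent of the chosen $P_{\beta}$ is a worthwhile clarification of the paper's terser argument, but not a different method.
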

\begin{proof}
Clearly  $P_{\beta}\in S_{ P_{\alpha}}$ 
if and only if  $\alpha\in C_{\frac{m-2}{2}}\{supp\{ \beta\} \}$, so the number subsets $S_{ P_{\alpha}}$ that contain  $ P_{\beta}$ is equal to $$|C_{(\frac{m-2}{2})}\{supp\{ \beta\}\}|=C_{(m-2)/2}^{m/2}=m/2=r_m-1$$ 
recuerde que $supp \{ \alpha\} \subseteq supp \{ \beta\}$. 
\end{proof}
Let us denote by 
\begin{equation}\label{Lmatrix}
{\EuScript L}_{r_m}
\end{equation}
the $C_{\frac{m-2}{2}}^m\times C_{\frac{m}{2}}^m$-incidence matrix
from $( C_{\frac{m}{2}}(\Sigma_m), S_{P_{\alpha}} )_{P_{\alpha}\in C_{\frac{m-2}{2}} (\Sigma_m)}$
(\ref{Setconf1})  incidence configuration 
\begin{prop}\label{princmatrx}
Let $2\leq m $ even positive integer and let $r_m=\frac{m+2}{2}$ then
\begin{description}
\item[a)] ${\EuScript L}_{r_m}$ has $r_m$-ones in each row
\item[b)] ${\EuScript L}_{r_m}$ has $r_m-1$-ones in each column
\item[c)] every two lines have at most one $1^{\prime}$ in common
\item[d)] ${\EuScript L}_{r_m}$ is sparse 
\end{description}
\end{prop}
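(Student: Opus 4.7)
My plan is to dispatch the four claims in order, leaning heavily on the lemmas that precede the proposition, so the heart of the argument is really only needed for part (c) of the column statement (and the qualitative sparsity in (d)).

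For part (a), by construction the row of ${\EuScript L}_{r_m}$ indexed by $P_\alpha \in C_{\frac{m-2}{2}}(\Sigma_m)$ has a $1$ in column $P_\beta$ precisely when $P_\beta \in S_{P_\alpha}$, so the row sum equals $|S_{P_\alpha}|$. By Lemma \ref{onesroow} this is exactly $r_m$. For part (b), the column indexed by $P_\beta$ has a $1$ in row $P_\alpha$ precisely when $P_\beta \in S_{P_\alpha}$, i.e.\ when $\operatorname{supp}\{\alpha\}\subseteq \operatorname{supp}\{\beta\}$; so the column sum equals the intersection count $w_{P_\beta}$, which is $r_m-1$ by Lemma \ref{unoscolumn}. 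Thus (a) and (b) require no new work.

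For part (c), I would treat rows and columns separately. Two distinct rows $P_\alpha, P_{\overline{\alpha}}$ share a $1$ in column $P_\beta$ iff $P_\beta \in S_{P_\alpha}\cap S_{P_{\overline{\alpha}}}$; Corollary \ref{canes1} immediately bounds this intersection by $1$. For two distinct columns $P_\beta, P_{\beta'}$, they share a $1$ in row $P_\alpha$ iff $\operatorname{supp}\{\alpha\}\subseteq \operatorname{supp}\{\beta\}\cap \operatorname{supp}\{\beta'\}$. Since $|\operatorname{supp}\{\alpha\}|=\tfrac{m-2}{2}$ and $|\operatorname{supp}\{\beta\}|=|\operatorname{supp}\{\beta'\}|=\tfrac{m}{2}$ with $\beta\neq\beta'$, the intersection has size at most $\tfrac{m-2}{2}$; if it has that size then it already equals $\operatorname{supp}\{\alpha\}$, which determines $\alpha$ uniquely via \ref{indequal000}. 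Hence at most one such row exists, giving the bound in common for columns as well.

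For part (d), I would estimate the density. By (a), the total number of $1$'s in ${\EuScript L}_{r_m}$ is $r_m\cdot C_{\frac{m-2}{2}}^m$, while the ambient matrix has $C_{\frac{m-2}{2}}^m\cdot C_{\frac{m}{2}}^m$ entries, so the proportion of nonzero entries is $r_m/C_{\frac{m}{2}}^m=(m+2)/(2\,C_{\frac{m}{2}}^m)$, which tends to $0$ rapidly in $m$. Since the notion of \emph{sparse} is qualitative (most entries are $0$), this ratio already suffices; I expect this to be the least delicate step. The main—only mildly nontrivial—obstacle is the column half of (c), which is not directly stated in the earlier lemmas; but once the counting reduction to $\operatorname{supp}\{\alpha\}=\operatorname{supp}\{\beta\}\cap\operatorname{supp}\{\beta'\}$ is made, uniqueness of $\alpha$ closes the argument.
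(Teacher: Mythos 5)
Your proof is correct and takes essentially the same route as the paper: parts (a) and (b) are read off from Lemmas \ref{onesroow} and \ref{unoscolumn}, part (c) from Corollary \ref{canes1}, and part (d) by the same density count $r_m/C^{m}_{m/2}\to 0$. The only deviations are your additional support-counting argument for the column half of (c), which the paper does not spell out but which is correct (and strengthens the reading of ``lines''), and your not isolating the trivial case $m=2$, which the paper dispatches separately before assuming $m\geq 4$; neither affects the validity of the argument.
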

\begin{proof}
The case $m=2$ generates the matrix ${\EuScript L}_2$ which trivially satisfies all the statements of this statement. So we assume that $m\geq 4$ for the rest of the proof.\\
Para  $a)$ the $1's$ in row $P_{\alpha}$ of ${\EuScript L}_{r_m}$ display the elements in the subset $S_{P_{\alpha}}$ so by lemma \ref{onesroow} each row has exactly $r_m$ ones in each row.\\
Para $b)$ the $1's$ in the column $P_{\beta}$ display the occurrences of the elements of $S_{P_{\alpha}}$ among
the subsets, esto se sigue del lema \ref{unoscolumn}.\\
$c)$ follows directly from corollary \ref{canes1}.\\
$d)$ The density of ones in the matrix is given by  $r_m\times C^m_{\frac{m-2}{2}}=(r_m-1)\times C^m_{\frac{m}{2}}$
 $$\frac{r_m}{ C^m_{\frac{m}{2}}}=\frac{r_m-1}{ C^m_{\frac{m-2}{2}}}$$
which approaches zero as $m$ approaches infinity.
\end{proof}
\begin{defn}\label{atlas}
If $m$ and $n$ are even integers such that $2\leq m \leq n$ and let $r_m=\lfloor\frac{m+2}{2}\rfloor$ and $r_n=\lfloor\frac{n+2}{2 }\rfloor$  we define the atlas to the family
\begin{equation}\label{atls}
{\EuScript A}:= \{ {\EuScript L}_{r_n}, {\EuScript L}_{r_n-1}, \ldots, {\EuScript L}_2\}  
\end{equation}
of incidence matrices corresponding to the family
\begin{equation}\label{Setconf111}
\bigg\{\big( C_{\frac{m}{2}}(\Sigma_{m}), \; S_{P_{\alpha}} \big)_{P_{\alpha}\in C_{\frac{m-2}{2}}(\Sigma_{m})}
 \bigg\}_{m=2}^n
\end{equation}
of incidence configurations
\end{defn}
\subsection{Cartesian configurations}
 For $n\geq 4$ even,  $r_n=\frac{n+2}{2}$,  $1 \leq k \leq r_n-2$
and   $1\leq a_1<a_2<\cdots<a_{2k}\leq 2n$ such that $a_i+a_j\neq 2n+1$  then we define 
$\Sigma_{a_1,\ldots,a_{2k}}=\Sigma_n-\{P_{a_1}, \dots, P_{a_{2k}}\}$ as in \ref{setSigma0011}.
We define an cartesian incidence configuration as in \ref{confcart}
  \begin{equation}\label{subsetconf12}
 (a_1,\ldots, a_{2k})\times  \bigg( C_{\frac{n-2k}{2}}(\Sigma_{a_1,\ldots ,a_{2k}}),  \; S_{(a_1,\ldots, a_{2k}, P_{\alpha})}
  \bigg)_{P_{\alpha}\in C_{\frac{n-2(k+1)}{2}}(\Sigma_{a_1,\ldots,a_{2k}})}
  \end{equation}
 where 
  \begin{equation*}\label{subsetconf21}
 \{(a_1,\ldots, a_{2k})\}\times C_{\frac{n-2k}{2}}(\Sigma_{a_1,\ldots ,a_{2k}}) 
\end{equation*}
is an $C^{n-2k}_{\frac{n-2k}{2}}$-set and the subsets are
\begin{equation}\label{subsetconf212}
S_{(a_1,\ldots, a_{2k}, P_{\alpha})}:=\{ (a_1,\ldots, a_{2k}, P_{\beta}) : supp\{\alpha\}\subset supp\{\beta\} \}
 \end{equation} 
 for all $P_{\alpha}\in C_{\frac{n-2(k+1)}{2}}(\Sigma_{a_1,\ldots,a_{2k}})$.
 We denote its $C^{n-2k}_{\frac{n-(2k+1)}{2}}\times C^{n-2k}_\frac{n-2k}{2}$ incidence matrix by ${\EuScript L}_{r_n-k}^{(a_1, \ldots, a_{2k})}$.
\begin{lem}\label{isomconf1}
For $n\geq 4$ even,  $r_n=\frac{n+2}{2}$,  $1 \leq k \leq r_n-2$ and   $1\leq a_1<a_2<\cdots<a_{2k}\leq 2n$ such that $a_i+a_j\neq 2n+1$  then the incidence matrix of \ref{subsetconf12} is ${\EuScript L}_{r_n-k}^{a_1,\ldots,a_{2k}}={\EuScript L}_{r_n-k}$ an element of ${\EuScript A}$ 
\end{lem}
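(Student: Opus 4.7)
The plan is to reduce the incidence matrix of the cartesian configuration \ref{subsetconf12} to a standard configuration from the atlas \ref{atls} in two steps: first strip off the cartesian factor $\{(a_1,\dots,a_{2k})\}$ using Lemma \ref{incconfisomr}, then reindex the ground $m$-set so the configuration matches $\big(C_{\frac{m}{2}}(\Sigma_m), S_{P_\alpha}\big)_{P_\alpha\in C_{\frac{m-2}{2}}(\Sigma_m)}$ for $m=n-2k$.

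First I would set $m := n-2k$. Since $1\leq k\leq r_n-2=\frac{n-2}{2}$, we get $2\leq m\leq n-2$, and because $n$ and $2k$ are both even, $m$ is even. Thus $r_m=\frac{m+2}{2}=\frac{n-2k+2}{2}=r_n-k$. Moreover $|\Sigma_{a_1,\ldots,a_{2k}}|=n-2k=m$ by \ref{setSigma0011}.

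Next, applying Lemma \ref{incconfisomr} to the cartesian incidence configuration \ref{subsetconf12} with the singleton $\{(a_1,\ldots,a_{2k})\}$, I obtain that it is isomorphic (as an incidence configuration) and has the same incidence matrix as the underlying configuration
\begin{equation*}
\bigl(C_{\frac{m}{2}}(\Sigma_{a_1,\ldots,a_{2k}}),\; S_{P_\alpha}\bigr)_{P_\alpha\in C_{\frac{m-2}{2}}(\Sigma_{a_1,\ldots,a_{2k}})},
\end{equation*}
where $S_{P_\alpha}=\{P_\beta\in C_{\frac{m}{2}}(\Sigma_{a_1,\ldots,a_{2k}}) : \mathrm{supp}\{\alpha\}\subset \mathrm{supp}\{\beta\}\}$, which is precisely the form \ref{Setconf2}.

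Then I would produce an explicit bijection $\psi:\Sigma_{a_1,\ldots,a_{2k}}\to \Sigma_m$ (for instance, the unique order-preserving bijection of the underlying $m$-element sets of pairs). This $\psi$ extends naturally to a bijection $C_s(\Sigma_{a_1,\ldots,a_{2k}})\to C_s(\Sigma_m)$ for every $s$ by applying it coordinatewise, and it preserves the support-inclusion relation defining $S_{P_\alpha}$ in \ref{Setconf2}. Hence $\psi$ is an isomorphism of incidence configurations in the sense made precise after \ref{confinc}, and therefore the two configurations share the same incidence matrix, namely ${\EuScript L}_{r_m}={\EuScript L}_{r_n-k}$ as defined in \ref{Lmatrix}.

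Combining the two steps, the incidence matrix of \ref{subsetconf12}, which was denoted ${\EuScript L}_{r_n-k}^{(a_1,\ldots,a_{2k})}$, equals ${\EuScript L}_{r_n-k}$, and since $2\leq r_n-k\leq r_n$, this matrix lies in the atlas ${\EuScript A}$ by Definition \ref{atlas}. The only nontrivial point is verifying that the parameters line up ($r_m=r_n-k$ and $m$ even in the range $[2,n-2]$); everything else is a direct application of Lemma \ref{incconfisomr} and relabeling the $m$-set via the order-preserving bijection, so I do not anticipate any genuine obstacle.
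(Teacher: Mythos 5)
Your proposal is correct and follows essentially the same route as the paper: set $m=n-2k$ so that $r_m=r_n-k$ with $2\leq m\leq n-2$, strip the cartesian factor via Lemma \ref{incconfisomr}, and relabel (renumber) the elements of $\Sigma_{a_1,\ldots,a_{2k}}$ to identify the configuration with $\big(C_{\frac{m}{2}}(\Sigma_m), S_{P_\alpha}\big)_{P_\alpha\in C_{\frac{m-2}{2}}(\Sigma_m)}$, hence the same incidence matrix ${\EuScript L}_{r_n-k}\in{\EuScript A}$. Your only addition is making the renumbering explicit as an order-preserving bijection, which the paper leaves implicit.
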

\begin{proof}
As $|\Sigma_{a_1,\ldots,a_{2k}}|=n-2k$ if we make $m=n-2k$ and $r_m=\frac{m+2}{2}$, clearly $2\leq n-2k \leq n-2 $ and $r_m=r_n-k$. A simple calculation shows that  $2\leq r_m \leq r_n-1$, then renumber the elements of $\Sigma_{a_1,\ldots,a_{2k}}= \{ P_1, \ldots, P_m\}$ so by the lemma \ref{incconfisomr} the incidence configuration
$$(a_1,\ldots, a_{2k})\times \bigg(  C_{\frac{n-2k}{2}}(\Sigma_{a_1,\ldots ,a_{2k}}), \;S_{(a_1,\ldots, a_{2k}, P_{\alpha})} \bigg)_{P_{\alpha}\in C_{\frac{n-2(k+1)}{2}}(\Sigma_ {a_1,\ldots,a_{2k}})}$$
 is isomorphic to 
  $$\big( C_{\frac{m}{2}}(\Sigma_{m}), \;\;\; S_{P_{\alpha}} \big)_{P_{\alpha}\in C_{\frac{m -2}{2}}(\Sigma_{m})}$$
  so also for the lemma  \ref{incconfisomr} both  have the same incidence matrix 
  ${\EuScript L}_{r_n-k}^{a_1,\ldots,a_{2k}}={\EuScript L}_{r_n-k}$, as we saw before $2\leq r_m \leq r_n-1$  and so
  ${\EuScript L}_{r_n-k}^{a_1,\ldots,a_{2k}}\in {\EuScript A}$ 
  \end{proof}
For $n\geq 5$ odd and $r_n=(n+1)/2$ and $j \in \{1, \ldots, n \}$, 
we define an cartesian incidence configuration as in \ref{confcart}
\begin{equation} \label{subsetconf122}
j\times \bigg( C_{\frac{n-1}{2}}(\Sigma_n), \; S_{ (j, P_{\alpha})}
 \bigg)_{P_{\alpha}\in C_{\frac{n-3}{2}}(\Sigma_n)}
\end{equation}
where
 \begin{equation*}
\{j\}\times C_{\frac{n-1}{2}}(\Sigma_n) 
\end{equation*} 
is a $C^n_{\frac{n}{2}}$-set and its subsets $S_{(j, P_{\alpha})}$ define by
  \begin{equation}\label{prodcruz0001}
  S_{(j, P_{\alpha})}=\{ (j, P_{\beta}): supp\{\alpha\}\subset supp\{ \beta\}\}\\                         
  \end{equation}
  with $P_{\alpha}\in C_{\frac{n-3}{2}}(\Sigma_n)$. We denote its $C^n_{(n-2)/2\times C^n_{n/2}}$-incidence matrix ${\EuScript L}_{r_n}^j$ for each $ j\in [n]$\\
   For $n\geq 5$ odd and $r_n=\frac{n+1}{2}$,  $1 \leq k \leq r_n-2$, consider
  $1\leq a_1<a_2<\cdots<a_{2k+1}\leq 2n$ such that $a_i+a_j\neq 2n+1$   
  we define
 $\Sigma_{a_1,\ldots,a_{2k+1}}=\Sigma_n-\{P_{a_1}, \ldots, P_{a_{2k+1}} \} $, as in \ref{setSigma0011} \\
We define an cartesian incidence configuration as in \ref{confcart}
  \begin{equation}\label{subsetconf2}
   (a_1,\ldots, a_{2k+1})\times  \bigg ( C_{\frac{n-(2k+1)}{2}}(\Sigma_{a_1,\ldots ,a_{2k+1}}), \; S_{ (a_1,\ldots, a_{2k+1},P_{\alpha}}) \bigg)_{P_{\alpha}\in C_{\frac{n-(2k+3)}{2}}(\Sigma_{a_1\ldots, a_{2k+1}})}
  \end{equation}
where 
  \begin{equation*}
 \{ (a_1,\ldots, a_{2k+1})\} \times C_{\frac{n-(2k+1)}{2}}(\Sigma_{a_1,\ldots ,a_{2k+1}}) 
\end{equation*}
is $C^{n-(2k+1)}_{\frac{n-(2k+1)}{2}}$-set
where the  family of subsets is given by
\begin{equation}\label{prodcruz0001001}
  S_{(a_1,\ldots, a_{2k+1},P_{\alpha})}=\{ (a_1,\ldots, a_{2k+1}, P_{\beta}):
 supp\{\alpha\}\subset supp\{\beta\} \}\\
 \end{equation} 
for all $ P_{\alpha}\in C_{\frac{n-2(k+3)}{2}}(\Sigma_{a_1\ldots, a_{2k}})$.  
${\EuScript L}_{r_m}^{(a_1,\ldots, a_{2k+1})}$ denotes to the $C^{n-(2k+1)}_{\frac{n-(2k+3)}{2}}\times C^{n-(2k+1)}_{\frac{n-(2k+1)}{2}}$ incidence matrix.
\begin{lem}\label{isomconf2}
\begin{description}
\item[a)]
The incidence matrix \ref{subsetconf122} is ${\EuScript L}_{r_n-n}$ 
\item[ b)] 
The incidence matrix \ \ref{subsetconf2} is ${\EuScript L}_{r_m}^{(a_1,\ldots, a_{2k+1})}={\EuScript L}_{r_n-k}$ and is an element of  $ {\EuScript A}$ 
\end{description}
\end{lem}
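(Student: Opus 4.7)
The plan is to reduce both parts to a combination of Lemma \ref{incconfisomr} (which identifies a cartesian incidence configuration with its underlying factor) and the parameter-matching argument already used in Lemma \ref{isomconf1}. The key new ingredient in the odd case is a parity observation: since $n$ is odd, removing an odd number $2k+1$ of indices from $\Sigma_n$ leaves a set of even cardinality, which brings the problem back under the hypotheses of Section 4.1.

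For part (a), I would apply Lemma \ref{incconfisomr} directly to the cartesian configuration \ref{subsetconf122}, which gives that its incidence matrix coincides with that of the non-cartesian configuration $(C_{(n-1)/2}(\Sigma_n),\, S_{P_\alpha})_{P_\alpha \in C_{(n-3)/2}(\Sigma_n)}$. By the definition of ${\EuScript L}_{r_n}^{j}$ this is the desired identification: each of the $n$ choices of $j \in [n]$ yields one copy of the base matrix, producing the $n$ summands ${\EuScript L}_{r_n}^n = {\EuScript L}_{r_n} \oplus \cdots \oplus {\EuScript L}_{r_n}$ that appear in Theorem \ref{resinic000} B).

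For part (b), I would again strip the cartesian factor $\{(a_1,\ldots,a_{2k+1})\}$ via Lemma \ref{incconfisomr}, reducing to the configuration indexed by $P_\alpha \in C_{(n-(2k+3))/2}(\Sigma_{a_1,\ldots,a_{2k+1}})$ with ground set $C_{(n-(2k+1))/2}(\Sigma_{a_1,\ldots,a_{2k+1}})$. Setting $m := n - (2k+1)$, the hypotheses $n$ odd and $1 \le k \le r_n - 2$ force $m$ to be an even integer in $[2, n-3]$, whence $r_m = (m+2)/2 = r_n - k$ lies in $\{2, \ldots, r_n - 1\}$. After relabeling the $m$ pairs of $\Sigma_{a_1,\ldots,a_{2k+1}}$ as $\{P_1, \ldots, P_m\}$, the configuration matches the base configuration \ref{Setconf1} at index $m$, so the incidence matrix is ${\EuScript L}_{r_m} = {\EuScript L}_{r_n - k}$, which is in ${\EuScript A}$ by Definition \ref{atlas}.

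The main obstacle, to the extent there is one, is the combinatorial bookkeeping: one must verify that the support-containment condition $\mathrm{supp}\{\alpha\} \subset \mathrm{supp}\{\beta\}$ inside $\Sigma_{a_1,\ldots,a_{2k+1}}$ transports correctly under relabeling to the condition defining $S_{P_\alpha}$ in $\Sigma_m$. This mirrors exactly the bookkeeping of Lemma \ref{isomconf1}; the genuinely new input is only the parity check that $n - (2k+1)$ is even, which is what makes the reduction to Section 4.1 legal in the first place.
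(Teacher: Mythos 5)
Your part (b) is correct and is essentially the paper's own argument: strip the cartesian factor $\{(a_1,\ldots,a_{2k+1})\}$ by Lemma \ref{incconfisomr}, set $m=n-(2k+1)$, observe that $m$ is even with $2\le m\le n-3$ (so $2\le r_m\le r_n-1$), relabel $\Sigma_{a_1,\ldots,a_{2k+1}}$ as $\{P_1,\ldots,P_m\}$ to recover the configuration \ref{Setconf1}, and compute $r_m=\frac{m+2}{2}=r_n-k$, which places ${\EuScript L}_{r_n-k}$ in ${\EuScript A}$ by Definition \ref{atlas}. That is exactly what the paper does for (b).

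Part (a), however, has a gap. After removing the factor $\{j\}$ you declare the result to be ``one copy of the base matrix'' and justify the identification ``by the definition of ${\EuScript L}_{r_n}^j$''; this is circular, since ${\EuScript L}_{r_n}^j$ is \emph{defined} as the incidence matrix of \ref{subsetconf122}, while the content of (a) is precisely that this matrix coincides with the atlas element ${\EuScript L}_{r_n}$, i.e.\ with the incidence matrix of a configuration of the form \ref{Setconf1} for some \emph{even} $m$. The configuration you land on after stripping the factor sits over the pair-set $\Sigma_n$, which has odd cardinality $n$ and blocks of size $\frac{n-1}{2}$, so it is not literally of the form \ref{Setconf1}; and your advertised parity ingredient (removing $2k+1$ pairs leaves an even set) is silent here because nothing is removed in case (a). The missing step, which is what the paper's proof supplies, is the parameter matching $m=n-1$: since the row labels $(j,P_\alpha)$ and column labels $(j,P_\beta)$ must be genuine elements of $I(n-2,2n)$ and $I(n,2n)$, the pair $P_j$ cannot occur in $P_\alpha$ or $P_\beta$, so the $j$-th block really lives over the $(n-1)$-set obtained by discarding $P_j$; after relabeling, one checks $r_{n-1}=\frac{(n-1)+2}{2}=\frac{n+1}{2}=r_n$ (the computation carried out in the paper's proof, cf.\ also Lemma \ref{sameAtlas1}), which is what identifies the block with ${\EuScript L}_{r_n}$. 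Without this reduction even the sizes do not match: over all of $\Sigma_n$ you would obtain a $C^{n}_{\frac{n-3}{2}}\times C^{n}_{\frac{n-1}{2}}$ matrix rather than the $C^{n-1}_{\frac{n-3}{2}}\times C^{n-1}_{\frac{n-1}{2}}$ matrix ${\EuScript L}_{r_n}$.
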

\begin{proof}
For the proof of $a)$ If we do $m=n-1$ then $n=m+1$ and so on $$j\times \bigg( C_{\frac{n-1}{2}}(\Sigma_n), S_{P_{\alpha}}  \bigg)_{P_{\alpha}\in C_{\frac{n-3}{2}}(\Sigma_n)}\cong \bigg( C_{\frac{m}{2}}(\Sigma_{m+1}), S_{P_{\alpha}} \bigg)_{P_{\alpha}\in C_{\frac{m-2}{2}}(\Sigma_{m+1})} $$ 
so ${\EuScript L}_{r_n}^{\{j \}}={\EuScript L}_{r_m}$ so $r_m=\frac{m+2}{2}=\frac{n+1}{2}=r_n$, then
${\EuScript L}_{r_n}^{\{j \}}={\EuScript L}_{r_n}$  and the part $a)$ has been proved.\\
For the proof of $b)$ clearly $|\Sigma_{a_1,\ldots, a_{2k+1}}|=n-(2k+1)$ if we do $m=(n-1)+2k$ and
$r_m=\frac{m+2}{2}$,  clearly we have that $r_m=r_n-k$. Now
if we rename the elements of $\Sigma_{a_1,\ldots, a_{2k+1}}=\{P_1, \ldots, P_m \}$ then
$$ (a_1,\ldots, a_{2k+1})\times\bigg ( C_{\frac{n-(2k+1)}{2}}(\Sigma_{a_1,\ldots ,a_{2k+1}}), S_{P_{\alpha}} \bigg)_{P_{\alpha}\in C_{\frac{n-(2k+3)}{2}}(\Sigma_{a_1\ldots, a_{2k}})}$$
is isomorphic to the incidence configuration $$( C_{\frac{m}{2}}(\Sigma_{m+1}), S_{P_{\alpha}} )_{P_{\alpha}\in C_{\frac{m-2}{2}}(\Sigma_m)} $$ then ${\EuScript L}_{r_m}^{(a_1,\ldots, a_{2k+1})}={\EuScript L}_{r-k}$ note that  $2\leq m \leq m-3$ implies that $2\leq r_m \leq r_n-1$. 
 \end{proof}
\begin{lem}\label{lem2.5} 
\begin{description}
\item[A) ] 
If $n\geq 4$ and let $r=\lfloor\frac{n+2}{2} \rfloor$,  then   
{ \begin{equation}\label{In2npartc11}
 C_{\frac{n-2}{2}}(\Sigma_{n})\cup\Big(\bigcup_{k=1}^{r-2}
\bigcup_{\substack{1\leq a_1<\cdots < a_{2k}\leq 2n\\ a_i+a_j\neq 2n+1}}(a_1,\ldots, a_{2k})\times C_{\frac{n-2(k+1)}{2}}(\Sigma_{a_1,\ldots ,a_{2k}})\Big).
\end{equation}}
is a partition of the set $I(n-2,2n)$.
\item[B)]
If $n\geq 5$ and let $r=\lfloor\frac{n+1}{2} \rfloor$,  then   
{ \begin{equation}\label{In2npartc12}
\bigcup_{j=1}^n\{j\}\times C_{\frac{n-2}{2}}(\Sigma_{n})\cup\Big(\bigcup_{k=1}^{r-2}
\bigcup_{\substack{1\leq a_1<\cdots < a_{2k}\leq 2n\\ a_i+a_j\neq 2n+1}}(a_1,\ldots, a_{2k})\times C_{\frac{n-2(k+1)}{2}}(\Sigma_{a_1,\ldots ,a_{2k}})\Big).
\end{equation}}
is a partition of the set $I(n-2,2n)$.
\end{description}
\end{lem}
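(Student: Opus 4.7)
The plan is to exhibit, for each $\alpha\in I(n-2,2n)$, a canonical decomposition of its support that places $\alpha$ into exactly one cell of the claimed union, and then check that this assignment is reversible. Given $\alpha\in I(n-2,2n)$ with $S=\operatorname{supp}(\alpha)$, define
\begin{equation*}
P(\alpha)=\{P_i\in\Sigma_n : \{i,2n-i+1\}\subseteq S\},\qquad U(\alpha)=S\setminus\bigcup_{P_i\in P(\alpha)}\{i,2n-i+1\}.
\end{equation*}
Then $S$ is the disjoint union, no two elements of $U(\alpha)$ are symplectic partners (so $a+a'\neq 2n+1$ for distinct $a,a'\in U(\alpha)$), and $|S|=2|P(\alpha)|+|U(\alpha)|=n-2$. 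In particular $|U(\alpha)|\equiv n\pmod 2$.

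For part (A), where $n$ is even, we have $|U(\alpha)|=2k$ with $0\le k\le (n-2)/2=r-2$. Listing $U(\alpha)=\{a_1<\cdots<a_{2k}\}$, the remaining pairs in $P(\alpha)$ must come from $\Sigma_n\setminus\{P_{a_1},\dots,P_{a_{2k}}\}=\Sigma_{a_1,\dots,a_{2k}}$, and there are exactly $(n-2-2k)/2=(n-2(k+1))/2$ of them; hence $P(\alpha)\in C_{(n-2(k+1))/2}(\Sigma_{a_1,\dots,a_{2k}})$. The case $k=0$ produces an element of $C_{(n-2)/2}(\Sigma_n)$, while $k\ge 1$ lands in the cell indexed by $(a_1,\dots,a_{2k})$. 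For part (B), where $n$ is odd, $|U(\alpha)|$ is odd; the minimal case $|U(\alpha)|=1$ contributes exactly the first summand $\bigcup_{j=1}^{n}\{j\}\times C_{\lfloor(n-2)/2\rfloor}(\Sigma_n)$, and the cases $|U(\alpha)|=2k+1\ge 3$ are handled exactly as in (A) with odd-sized tuples $(a_1,\dots,a_{2k+1})$ in place of $(a_1,\dots,a_{2k})$.

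The assignment $\alpha\mapsto((a_1,\dots),P(\alpha))$ is injective because the image recovers $S=\{a_i\}\cup\bigcup_{P_i\in P(\alpha)}\{i,2n-i+1\}$, and $\alpha$ is uniquely determined by $S$ since its entries are in increasing order. It is surjective since, for any cell datum $((a_1,\dots,a_{2k}),P_\gamma)$ with $P_\gamma\in C_{(n-2(k+1))/2}(\Sigma_{a_1,\dots,a_{2k}})$, reconstructing $S$ by the same formula produces a set of size $2k+2\cdot(n-2(k+1))/2=n-2$ with distinct entries, by virtue of the condition $a_i+a_j\neq 2n+1$ (which prevents any $a_i$ from being the partner of another $a_j$) and the condition $P_\gamma\subseteq\Sigma_{a_1,\dots,a_{2k}}$ (which prevents any pair in $P_\gamma$ from containing an $a_i$); writing $S$ in increasing order yields the unique $\alpha\in I(n-2,2n)$ with this support. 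Disjointness of the union follows because different $\alpha$'s with different values of $k$ or different unpaired tuples $(a_1,\dots)$ have different $U(\alpha)$.

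The argument is essentially combinatorial bookkeeping, and no single step is deep. The main technical nuisance is threading the parity cases cleanly: one must be careful that for $n$ even the index $k$ ranges over $0\le k\le r-2$ (with $k=0$ extracted as the first summand $C_{(n-2)/2}(\Sigma_n)$), while for $n$ odd the minimal number of unpaired indices is one, so the $k=0$ case must be split off as $n$ separate subcases indexed by the single unpaired element $j\in\{1,\dots,n\}$, matching the extra $\bigcup_{j=1}^n$ prefactor in (B).
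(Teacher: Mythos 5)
Your proof is correct and follows essentially the same route as the paper: classify each $\alpha\in I(n-2,2n)$ by splitting its support into the symplectic pairs $P_i$ it contains and the remaining unpaired indices, which places $\alpha$ in exactly one cell of the union. You are in fact somewhat more careful than the paper's own argument, which only checks the covering by case analysis on $supp\{\alpha\}\cap\Sigma_n$; you additionally verify disjointness, the admissible range of $k$, and you rightly read the odd case with tuples $(a_1,\ldots,a_{2k+1})$, in line with the paper's use of the lemma in Theorem \ref{resinic000} B.
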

\begin{proof}
Let $n\geq 4$ and let $r_n=\frac{n+2}{2}$,  then   it is sufficient to show that $I(n-2, 2n)$ is contained in \ref{In2npartc11} $($ resp. If $n\geq 5$ and let $r_n=\frac{n+1}{2}$,  then  $I(n-2, 2n)$ is contained in \ref{In2npartc12} $)$. If $supp\{ \alpha_{rs}\}\subseteq \Sigma_n$ then it exists $P_{\theta}\in C_{\frac{n-2}{2}}(\Sigma_n)$ such that
$\alpha_{rs}=P_{\theta}\in C_{\frac{n-2}{2}}(\Sigma_n)$ $($ resp. exists $P_{\theta}\in C_{\frac{n-3}{2}}(\Sigma_n)$ such that
$\alpha_{rs}=(j,P_{\theta})\in \{j \}\times C_{\frac{n-3}{2}}(\Sigma_n)$). 
If $supp\{ \alpha_{rs}\}\cap \Sigma_n= \emptyset$ then $\alpha_{rs}=(a_1, \ldots, a_{n-2})$ such that $a_i+a_j\neq 2n+1$  and so we have to $\alpha_{rs}\in C_0(\Sigma_{a_1, \ldots, a_{n-2}})$  $($ resp. $\alpha_{rs}=(a_1, \ldots, a_{n-3})\in C_0(\Sigma_{a_1, \ldots, a_{n-3}})$ since $a_i+a_j\neq 2n+1$. $)$  
If $supp\{ \alpha_{rs}\}\cap \Sigma_n\neq \emptyset$ then there are  $P_{\theta}\in C_{\frac{n-2k}{2}}(\Sigma_{a_1, \ldots, a_{2k}})$ and $1\leq a_1<\ldots, < a_{2k}\leq 2n$ such that $\alpha_{rs}=(a_1, \ldots, a_{2k}, P_{\theta})\in (a_1, \ldots, a_{2k})\times C_{\frac{n-2k}{2}}(\Sigma_{a_1, \ldots, a_{2k}})$ . $($ resp. exist $P_{\theta}\in C_{\frac{(n-1)-2k}{2}}(\Sigma_{a_1, \ldots, a_{2k+1}})$ and $1\leq a_1<\ldots, < a_{2k+1}\leq 2n$ such that $\alpha_{rs}=(a_1, \ldots, a_{2k+1}, P_{\theta})\in (a_1, \ldots, a_{2k+1})\times C_{\frac{(n-1)-2k}{2}}(\Sigma_{a_1, \ldots, a_{2k+1}})$ $)$
with which the demonstration ends.
\end{proof}
For all $\alpha_{rs}\in I(n-2,2n)$ making a change in the notation we rewrite \ref{FuncLinPI} as 
\begin{gather}\label{FuncLinPI2}
\Pi_{\alpha_{rs}}=\sum_{i=1}^nc_{\alpha_{rs}P_i} X_{\alpha_{rs}P_i} 
\end{gather}
where 
$$c_{\alpha_{rs}P_i}=\begin{cases}
1  & \text{if $|supp\{\alpha_{rs}P_i \}|=n$}, \\
0 & \text{otherwise},
\end{cases}$$
For each $\alpha_{rs}\in I(n, 2n)$ consider
\begin{align*}
S_{\alpha_{rs}}&:=\{ \beta\in I(n, 2n): supp\{ \alpha\}\subset supp\{ \beta \}\}\\
                        &=\{  \beta\in I(n, 2n): |supp\{\alpha\}\cap supp\{ \beta \}|=n-2 \}\\
                        &=\{ \alpha_{rs}P_i \in I(n-2, 2n)\times \Sigma_n : |supp\{\alpha_{rs}P_i \}|=n\} 
\end{align*}
\begin{rem}\label{remsalphars}
Note that depending on where $\alpha_{rs}\in I(n-2, 2n)$ is,  we have  $S_{\alpha_{rs}}$ is equal to
\ref{Setconf2},  \ref{subsetconf212}, \ref{prodcruz0001} or \ref{prodcruz0001001} respectively.  
\end{rem}
%%%%%%%%%%%%%%%%%%%%%%
%%%%%%%%%%
%%
\subsection{funcion $\varphi$}
For $n\geq 4$,  we consider
 \begin{gather}\label{extvarphi}
   \varphi:I(n-2,2n)\rightarrow \{0,1\}^{C^{2n}_{n}}\\
   (\alpha_{rs})\mapsto ( c_{\varepsilon} )_{\varepsilon\in I(n,2n)}\ ; where\\
  c_{\beta} = \begin{cases}
1  & \text{if}\; \beta \in S_{\alpha_{rs}}\\
0& \text{otherwise}.
\end{cases}
\end{gather}
Clearly
\begin{equation}
B_{L(n, E)}=\varphi(I(n, 2n))
\end{equation}
up to permutation of rows
\begin{lem}\label{lemmaTri}
The function $\varphi$ is injective
\end{lem}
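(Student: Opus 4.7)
The plan is to recover the support of $\alpha_{rs}$ from the $0/1$-vector $\varphi(\alpha_{rs})$, which by \ref{indequal000} will force equality of indices. Concretely, I will show that
\[
\mathrm{supp}\{\alpha_{rs}\}\;=\;\bigcap_{\beta\in S_{\alpha_{rs}}}\mathrm{supp}\{\beta\},
\]
so that two distinct indices in $I(n-2,2n)$ produce distinct vectors $\varphi$.

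\textbf{Step 1 (size of $S_{\alpha_{rs}}$).} First I would count the number of complete pairs and half-pairs of $\Sigma_n$ inside $\mathrm{supp}\{\alpha_{rs}\}$. Writing $k$ for the number of pairs $P_i\subseteq \mathrm{supp}\{\alpha_{rs}\}$ and $m$ for the number of indices $i$ with exactly one of $\{i,2n-i+1\}$ in $\mathrm{supp}\{\alpha_{rs}\}$, the identity $2k+m=n-2$ forces the number of pairs disjoint from $\mathrm{supp}\{\alpha_{rs}\}$ to be $n-k-m=k+2\ge 2$. Since each such disjoint pair $P_i$ produces a distinct element $\beta=\alpha_{rs}P_i\in I(n,2n)$ with $|\mathrm{supp}\{\beta\}|=n$, we get $|S_{\alpha_{rs}}|=k+2\ge 2$.

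\textbf{Step 2 (recovering the support).} The inclusion $\mathrm{supp}\{\alpha_{rs}\}\subseteq\mathrm{supp}\{\beta\}$ for every $\beta\in S_{\alpha_{rs}}$ is immediate from the definition. For the reverse inclusion, I assume towards a contradiction that some $j\in\bigcap_{\beta\in S_{\alpha_{rs}}}\mathrm{supp}\{\beta\}$ is not in $\mathrm{supp}\{\alpha_{rs}\}$. Since $j$ belongs to at most one pair of $\Sigma_n$, and Step 1 gives at least two distinct pairs $P_i,P_{i'}$ disjoint from $\mathrm{supp}\{\alpha_{rs}\}$, at least one of these pairs, say $P_i$, avoids $j$. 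Then $\beta=\alpha_{rs}P_i\in S_{\alpha_{rs}}$ has $j\notin\mathrm{supp}\{\beta\}$, contradicting the choice of $j$.

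\textbf{Step 3 (conclusion).} If $\varphi(\alpha_{rs})=\varphi(\overline{\alpha}_{rs})$ then $S_{\alpha_{rs}}=S_{\overline{\alpha}_{rs}}$ as subsets of $I(n,2n)$, so Step 2 gives $\mathrm{supp}\{\alpha_{rs}\}=\mathrm{supp}\{\overline{\alpha}_{rs}\}$ and \ref{indequal000} delivers $\alpha_{rs}=\overline{\alpha}_{rs}$.

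The only delicate point is Step 1: without the uniform lower bound $|S_{\alpha_{rs}}|\ge 2$, the intersection argument in Step 2 collapses (a single $\beta$ determines $\mathrm{supp}\{\alpha\}$ only up to a two-element ambiguity). This is really the same obstruction already handled for the restricted configurations in Corollary \ref{canes2}; here the bookkeeping is slightly finer because $\alpha_{rs}$ may split $\Sigma_n$ into complete pairs, half-pairs and untouched pairs, so the counting via the identity $2k+m=n-2$ is what makes the argument uniform across all the cases listed in Remark \ref{remsalphars}.
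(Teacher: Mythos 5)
Your argument is correct, but it is not the paper's route. The paper proves injectivity in one line: $\varphi(\alpha_{rs})=\varphi(\overline{\alpha}_{rs})$ gives $S_{\alpha_{rs}}=S_{\overline{\alpha}_{rs}}$, and then it invokes Corollary~\ref{canes2} to conclude $\alpha_{rs}=\overline{\alpha}_{rs}$. You take the same first step but then, instead of citing Corollary~\ref{canes2}, you recover the index directly by showing $\mathrm{supp}\{\alpha_{rs}\}=\bigcap_{\beta\in S_{\alpha_{rs}}}\mathrm{supp}\{\beta\}$, using the count $2k+m=n-2$ to guarantee at least two pairs of $\Sigma_n$ disjoint from $\mathrm{supp}\{\alpha_{rs}\}$ (so the intersection cannot pick up a spurious index), and finishing with \ref{indequal000}. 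This buys you something real: Corollary~\ref{canes2} is stated only for indices in $C_{\frac{m-2}{2}}(\Sigma_m)$, i.e.\ supports made entirely of pairs, whereas in Lemma~\ref{lemmaTri} the index $\alpha_{rs}$ is an arbitrary element of $I(n-2,2n)$ and may contain unpaired entries $a_1,\ldots,a_{2k}$; the paper implicitly handles this through the identifications of Remark~\ref{remsalphars} and Lemma~\ref{incconfisomr}, but one must still rule out that two indices with \emph{different} unpaired parts share the same set $S_{\alpha_{rs}}$, which your intersection argument settles uniformly. The cost is a little extra bookkeeping (the $k$, $m$ count); the gain is a self-contained proof that does not stretch Corollary~\ref{canes2} beyond its stated scope.
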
    
\begin{proof}
Let $\alpha_{rs}, \; \alpha^{\prime}_{rs}\in I(n-2, 2n)$ such that $\varphi(\alpha_{rs})=\varphi(\alpha^{\prime}_{rs}) $
then 
$( c_{\varepsilon} )_{\varepsilon\in I(n,2n)}=( c^{\prime}_{\varepsilon} )_{\varepsilon\in I(n,2n)}$  this implies            $S_{\alpha_{rs}}=S_{\alpha^{\prime}_{rs}}$ and by corollary \ref{canes2} we have $\alpha_{rs}=\alpha^{\prime}_{rs}$
\end{proof}
 \begin{cor}\label{bloksmatrx}
 \begin{description}
 Let $n\geq 4$ integer even, $r_n=\frac{n+2}{2}$ then
 \item[a)] $\varphi(C_{\frac{n-2}{2}}(\Sigma_n))={\EuScript L}_{r_n}$
 \item[b)] $\varphi((a_1, \ldots, a_{2k})\times C_{\frac{n-2k}{2}}(\Sigma_{a_1,\ldots ,a_{2k}}))={\EuScript L}^{(a_1, \ldots, a_{2k})}_{r_n-k}$ \\
 Let $n\geq 5$ odd integer, $r_n=\frac{n+1}{2}$ then
  \item[c)] $\varphi(\{ j\}\times C_{\frac{n-2}{2}}(\Sigma_n))={\EuScript L}^j_{r_n}$ for all $j=1, \ldots, n$
 \item[d)] $\varphi((a_1, \ldots, a_{2k+1})\times C_{\frac{n-2(k+1)}{2}}(\Sigma_{a_1,\ldots ,a_{2k+1}}))={\EuScript L}^{(a_1, \ldots, a_{2k+1})}_{r_n-k}$ 
 \end{description}
 \end{cor}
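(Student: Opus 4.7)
The plan is to treat all four parts uniformly using the partition of $I(n-2,2n)$ furnished by Lemma \ref{lem2.5}, Remark \ref{remsalphars} which identifies the set $S_{\alpha_{rs}}$ according to which block of the partition $\alpha_{rs}$ lies in, and the injectivity of $\varphi$ given by Lemma \ref{lemmaTri}. The core observation is just the definition of $\varphi$: the row $\varphi(\alpha_{rs})$ is precisely the characteristic vector of $S_{\alpha_{rs}}$ inside $I(n,2n)$, so once $S_{\alpha_{rs}}$ is matched with the family of subsets used to define the relevant incidence matrix, the assertion becomes tautological.

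For part (a), I would first verify that for $\alpha_{rs}=P_{\alpha}\in C_{\frac{n-2}{2}}(\Sigma_n)$, the set $S_{\alpha_{rs}}$ is actually contained in $C_{\frac{n}{2}}(\Sigma_n)$. This is a one-line support argument: if $\beta=\alpha_{rs}P_i\in S_{\alpha_{rs}}$ with $|\text{supp}\{\alpha_{rs}P_i\}|=n$, then $\text{supp}\{\beta\}=\text{supp}\{P_{\alpha}\}\cup\text{supp}\{P_i\}$ consists entirely of paired indices, so $\beta\in C_{\frac{n}{2}}(\Sigma_n)$. Therefore $\varphi(P_{\alpha})$ has zeros in every column outside $C_{\frac{n}{2}}(\Sigma_n)$, and on $C_{\frac{n}{2}}(\Sigma_n)$ it coincides — by Remark \ref{remsalphars} and definition (\ref{Setconf2}) — with the row of ${\EuScript L}_{r_n}$ indexed by $P_{\alpha}$. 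Running over all $P_{\alpha}\in C_{\frac{n-2}{2}}(\Sigma_n)$ and invoking Lemma \ref{lemmaTri} to rule out row collapse, we obtain $\varphi\bigl(C_{\frac{n-2}{2}}(\Sigma_n)\bigr)={\EuScript L}_{r_n}$.

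Parts (b), (c), (d) follow the identical template, with only the partition piece changed. For (b), the support of $\alpha_{rs}=(a_1,\ldots,a_{2k},P_{\alpha})$ forces any $\beta\in S_{\alpha_{rs}}$ to have the form $(a_1,\ldots,a_{2k},P_{\beta})$ with $P_{\beta}\in C_{\frac{n-2k}{2}}(\Sigma_{a_1,\ldots,a_{2k}})$, because the forbidden indices $a_1,\ldots,a_{2k}$ remain locked in place and the added pair must again come from $\Sigma_{a_1,\ldots,a_{2k}}$ to preserve $|\text{supp}\{\beta\}|=n$ in $I(n,2n)$; the Remark then identifies $S_{\alpha_{rs}}$ with the set (\ref{subsetconf212}) used to define ${\EuScript L}^{(a_1,\ldots,a_{2k})}_{r_n-k}$. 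Parts (c) and (d) are the same argument with the odd-dimensional cartesian configurations (\ref{subsetconf122}) and (\ref{subsetconf2}), pulling the extra fixed index $j$ or $(a_1,\ldots,a_{2k+1})$ through the calculation unchanged.

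The only real content to check carefully is the bookkeeping of indices and the matching of row/column orderings: one must confirm that the bijections $P_{\alpha}\leftrightarrow (a_1,\ldots,a_{2k},P_{\alpha})$ and $P_{\beta}\leftrightarrow (a_1,\ldots,a_{2k},P_{\beta})$ used implicitly when writing ${\EuScript L}^{(a_1,\ldots,a_{2k})}_{r_n-k}$ as a copy of ${\EuScript L}_{r_n-k}$ (via Lemma \ref{isomconf1} or \ref{isomconf2}) are the same bijections induced by the partition of $I(n-2,2n)$ and by the restriction of $\varphi$ to the relevant piece. I expect this indexing verification to be the main obstacle — not a conceptual one, but the place where mistakes are easiest — while the support and incidence arguments themselves are essentially immediate from the preceding lemmas.
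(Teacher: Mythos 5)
Your proposal is correct and follows essentially the same route as the paper, which deduces the corollary directly from Lemma \ref{lemmaTri} and Remark \ref{remsalphars}; you simply make explicit the support-containment checks and the identification of $S_{\alpha_{rs}}$ with the subsets defining each incidence matrix that the paper leaves implicit.
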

 \begin{proof}
 The proof follows directly from the lemma \ref{lemmaTri} and remark \ref{remsalphars}.
 \end{proof} 
\subsection{Proof of the theorem \ref{resinic000}}   
   $A)$: 
    by Lemma \ref{lem2.5}  we\; have 
\begin{align*}
        I(n-2,2n)&=
        C_{\frac{n-2}{2}}(\Sigma_{n})\cup\Big(\bigcup_{k=1}^{r_n-2}
\bigcup_{\substack{1\leq a_1<\cdots < a_{2k}\leq 2n\\ a_i+a_j\neq 2n+1}}(a_1,\ldots, a_{2k})\times C_{\frac{n-2k}{2}}(\Sigma_{a_1,\ldots ,a_{2k}})\Big).
\end{align*}
Since $\varphi$ is injective we have a partition in the image, 
 \begin{align*}
        {\varphi(I(n-2, 2n))} &= \varphi(C_{\frac{n-2}{2}}(\Sigma_{n})) \bigcup
        \Bigg(  \bigcup_{k-1}^{r_n-2} \bigcup_{\substack{1\leq a_1<\cdots < a_{2k}\leq 2n \\ a_i+a_j\neq 2n+1}} \varphi(_{(a_1,\ldots, a_{2k})\times C_{\frac{n-2(k-1)}{2}}(\Sigma_{a_1,\ldots ,a_{2k}})})\Bigg)
  \end{align*}
Associating the corresponding matrix, using the corollary \ref{bloksmatrx}, we have that 
  \begin{align*}      
        {B_{L(n,E)}} & = {\EuScript L}_{r_n}\oplus \bigoplus_{k=1}^{r_n-2}\Big(\bigoplus_{\substack{1\leq a_1<\cdots<a_{2k}\leq 2n\\ a_i+a_j\neq 2n+1}}\!\!\!\!{\EuScript L}_{r_n-k}^{(a_1,\cdots, a_{2k})}\Big)
\end{align*} 
\qed\\
Part $B)$  
Proceeding as in part $A)$ of this proof, we have    
\begin{multline*}    
      \bigcup_{j=1}^n(\{j\}\times C_{\frac{n}{2}}(\Sigma_{n}))\cup\Big(\bigcup_{k=1}^{r_n-2}
\bigcup_{\substack{1\leq a_1<\cdots < a_{2k+1}\leq 2n\\ a_i+a_j\neq 2n+1}}(a_1,\ldots, a_{2k+1})\times C_{\frac{n-2k}{2}}(\Sigma_{a_1,\ldots ,a_{2k+1}})\Big)
\end{multline*} 
 we obtain
\begin{align*} \varphi_{(I(n-2,2n))}&=\\
      & =  \bigcup_{j=1}^n \varphi(\{j\}\times C_{\frac{n}{2}}(\Sigma_{n})) \cup  \bigcup_{k=1}^{r-2}\Bigg ( \bigcup_{\substack{1\leq a_1<
            a_2< \ldots < a_{2k+1}\leq 2n \\ a_i + a_j \neq 2n +1}}
        \varphi_{(\Sigma( a_1, a_2,\ldots, a_{2k+1})})\Bigg)   \\
        B_{L(n,E)}&=\bigoplus_{j=1}^n {\EuScript L}_{r_n}^j \oplus
        \bigoplus_{k=1}^{r_n-2} \Bigg(\bigoplus_{\substack{1\leq a_1 < \ldots <
            a_{2k+1} \leq a_{2n} \\ a_i + a_j \neq 2n+1}}  {\EuScript
            L}_{r_n-k}^{(a_1, a_2,\ldots, a_{2k+1})}\Bigg)          
 \end{align*}
 \qed\\
 %
 %%%%%         
\section{Consequences}
\begin{prop}\label{UnicoM}
$\ker f$ is the smallest linear subspace which contains $L(n, E)$
\end{prop}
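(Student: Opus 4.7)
The plan is to leverage Lemma \ref{UniP2} (equivalently Theorem \ref{UniP22}), which already identifies the full space of linear functionals vanishing on $L(n,E)$, and then pass to annihilators. The containment $L(n,E) \subseteq \ker f$ is free from \eqref{LGint}, so the content is the minimality.

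First I would fix a linear subspace $M \subseteq \wedge^n E$ (equivalently, a projective linear subspace of $\mathbb{P}(\wedge^n E)$) such that $L(n,E) \subseteq M$, and aim to show $\ker f \subseteq M$. The natural tool is the annihilator: let
\[
M^{\perp} = \{\, h \in (\wedge^n E)^* : h(M) = 0\,\}.
\]
For any $h \in M^{\perp}$ one has $h(L(n,E)) = 0$, so Lemma \ref{UniP2} applies and gives
\[
h \in \bigl\langle \Pi_{\alpha_{rs}} : \alpha_{rs} \in I(n-2,2n)\bigr\rangle_{\mathbb{F}}.
\]

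Next I would identify this span with $(\ker f)^{\perp}$. By the characterization of $\ker f$ recalled just before \eqref{FuncLinPI} (Proposition 6 of \cite{bib2}), the family $\{\Pi_{\alpha_{rs}}\}$ has common kernel exactly $\ker f$, so standard finite-dimensional duality gives
\[
\bigl\langle \Pi_{\alpha_{rs}} : \alpha_{rs} \in I(n-2,2n)\bigr\rangle_{\mathbb{F}} = (\ker f)^{\perp}.
\]
Combining the two displays, $M^{\perp} \subseteq (\ker f)^{\perp}$. Taking annihilators reverses the inclusion, and since $\wedge^n E$ is finite-dimensional we get $\ker f = (\ker f)^{\perp\perp} \subseteq M^{\perp\perp} = M$, which is exactly the required minimality.

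There is no real obstacle here: once Lemma \ref{UniP2} is in hand, the statement is a two-line exercise in linear duality. The only point deserving care is the clean identification $\langle \Pi_{\alpha_{rs}}\rangle_{\mathbb{F}} = (\ker f)^{\perp}$ — the inclusion $\subseteq$ is by definition of the $\Pi_{\alpha_{rs}}$, while the reverse inclusion uses that a subspace of a finite-dimensional space is cut out by its annihilator, so any linear functional vanishing on $\ker f$ is automatically a linear combination of a generating set for $(\ker f)^{\perp}$. All genuine labor has been absorbed into the proof of Lemma \ref{UniP2}, which is precisely why this proposition comes at the end as a formal consequence.
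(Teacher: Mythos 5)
Your proof is correct and takes essentially the same route as the paper: both reduce the statement to Lemma \ref{UniP2} plus elementary finite-dimensional duality, the paper phrasing the duality step as a contradiction with a hyperplane $H=Z\langle h\rangle$ containing a putative proper linear subspace of ${\mathbb P}(\ker f)$, while you phrase it directly through annihilators and the identification $\langle \Pi_{\alpha_{rs}} : \alpha_{rs}\in I(n-2,2n)\rangle_{\mathbb{F}}=(\ker f)^{\perp}$. Your double-annihilator write-up is a slightly cleaner packaging of the same idea, so nothing of substance differs.
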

\begin{proof}
Suppose there exists $W$ vector subspace of ${\mathbb P}(\wedge ^{n}E)$ proper such that $L (n, E) \subseteq W\subset {\mathbb P}(\ker f)$ then
exists $H$ hyperplane of ${\mathbb P}(\wedge ^{n}E)$ of codimension 1 such that $W \subset H$ then exists 
$h \in (\bigwedge ^{n}E)^{*}$  such that $H=Z\langle h \rangle$ and $\{ \Pi_{\alpha_{rs}}, h: \alpha_{rs}\in I(n-2, 2n)\}$ are linearly independent, however since they also $L(n, E) \subset H$ then $h(L(n, E))=0$ so by the theorem \ref{UniP2} we have $h \in \langle \Pi_{\alpha_{rs}} :  \alpha_{rs}\in I(n-2, 2n)\rangle$ 
which is absurd.
\end{proof}
\begin{lem}\label{contoisom01}
Let $g:\wedge^nE\longrightarrow \wedge^{n-2}E$ a linear transformation such that $g(L(n, E))=0$
then $g(\ker f)=0$
\end{lem}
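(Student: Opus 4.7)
The plan is to reduce the vector-valued statement to the scalar case already resolved by Lemma~\ref{UniP2}/Theorem~\ref{UniP22}, by testing $g$ against the dual basis of $\wedge^{n-2}E$.

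First, fix the basis $\{e_{\alpha_{rs}} : \alpha_{rs}\in I(n-2,2n)\}$ of $\wedge^{n-2}E$ and let $\pi_{\alpha_{rs}}:\wedge^{n-2}E\to\mathbb{F}$ denote the corresponding coordinate projection. For each $\alpha_{rs}\in I(n-2,2n)$ define the composite linear functional
\[
h_{\alpha_{rs}} := \pi_{\alpha_{rs}}\circ g \in (\wedge^n E)^{*}.
\]
Clearly $g(w)=\sum_{\alpha_{rs}\in I(n-2,2n)} h_{\alpha_{rs}}(w)\, e_{\alpha_{rs}}$ for every $w\in\wedge^n E$, so $g(w)=0$ if and only if $h_{\alpha_{rs}}(w)=0$ for all $\alpha_{rs}\in I(n-2,2n)$.

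Next, from the hypothesis $g(L(n,E))=0$ it follows immediately that $h_{\alpha_{rs}}(L(n,E))=0$ for every $\alpha_{rs}\in I(n-2,2n)$. Applying Lemma~\ref{UniP2} (equivalently Theorem~\ref{UniP22}) to each such scalar functional, there exist constants $\lambda^{(\alpha_{rs})}_{\beta_{rs}}\in\mathbb{F}$ with
\[
h_{\alpha_{rs}} = \sum_{\beta_{rs}\in I(n-2,2n)} \lambda^{(\alpha_{rs})}_{\beta_{rs}}\, \Pi_{\beta_{rs}} \quad\text{in } (\wedge^n E)^{*}.
\]

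Finally, recall from the characterization of $\ker f$ stated in the preliminaries (\cite[Prop.~6]{bib2}) that a vector $w=\sum_{\alpha}p_\alpha e_\alpha\in\wedge^n E$ lies in $\ker f$ if and only if $\sum_{i=1}^{n} p_{i,\beta_{rs},2n-i+1}=0$ for every $\beta_{rs}\in I(n-2,2n)$; that is, $\Pi_{\beta_{rs}}(\ker f)=0$ for every $\beta_{rs}$. Combining this with the expansion above yields $h_{\alpha_{rs}}(\ker f)=0$ for every $\alpha_{rs}$, and the decomposition of $g$ through the $h_{\alpha_{rs}}$ then gives $g(\ker f)=0$, as required.

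There is no real obstacle: the argument is a clean dualization, and the essential content is already packaged in Lemma~\ref{UniP2}. The only point worth being careful about is the final step, namely that $\Pi_{\beta_{rs}}\in (\wedge^n E)^{*}$ is literally the functional whose zero set is $\ker f$, which is exactly the content of \cite[Prop.~6]{bib2} quoted just before Definition~\ref{relinpluker}.
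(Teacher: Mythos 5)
Your argument is correct, but it follows a different route from the paper. The paper obtains this lemma in two lines from Proposition~\ref{UnicoM}: since $g$ is linear and $g(L(n,E))=0$, the linear subspace $\ker g$ contains $L(n,E)$, and Proposition~\ref{UnicoM} (that $\ker f$ is the smallest linear subspace containing $L(n,E)$) immediately gives $\ker f\subseteq\ker g$, i.e.\ $g(\ker f)=0$. You bypass Proposition~\ref{UnicoM} entirely: you decompose $g$ through the dual basis of the target, $g=\sum_{\alpha_{rs}}h_{\alpha_{rs}}\,e_{\alpha_{rs}}$ with $h_{\alpha_{rs}}=\pi_{\alpha_{rs}}\circ g$, apply Lemma~\ref{UniP2} to each coordinate functional, and then use that $\ker f$ is precisely the common zero locus of the $\Pi_{\beta_{rs}}$, which is the characterization of $\ker f$ from \cite[Proposition 6]{bib2} quoted in the preliminaries. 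The two arguments carry the same essential content---Proposition~\ref{UnicoM} is itself deduced from Lemma~\ref{UniP2}---but yours is the more self-contained and elementary one, a pure linear-algebra dualization that avoids the hyperplane/codimension argument used to prove Proposition~\ref{UnicoM}; the paper's version is shorter once that proposition is already available. One minor point worth stating explicitly in your write-up: some coordinate functionals $h_{\alpha_{rs}}$ may be identically zero, and Lemma~\ref{UniP2} still applies harmlessly since the zero functional lies in $\langle \Pi_{\beta_{rs}}\rangle_{\mathbb F}$.
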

\begin{proof}
$g(L(n, E))=0$ we have $L(n, E)\subseteq \ker g$ and by proposition \ref{UnicoM} we have to $\ker f\subseteq \ker g$
 that is $g(\ker f)=0$.
\end{proof}

We say that the {\textit embedding rank} $er(L(n, E))$ of $L(n, E)$ is the dimension of the smallest projective space  which contains $L(n, E)$, under the Pl\"ucker embedding
\begin{lem}
Embedding rank of $L(n, E)$ is $er(L(n, E))=C^{2n}_{n}-rank B_{L(n, E)}$ 
\end{lem}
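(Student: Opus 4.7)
The plan is to reduce the computation of $er(L(n,E))$ to a standard rank-nullity calculation, using the two structural results already established: Proposition \ref{UnicoM} and Theorem \ref{UniP22}.

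First, I would invoke Proposition \ref{UnicoM} to identify the smallest linear (projective) subspace containing $L(n,E)$ with $\mathbb{P}(\ker f)$, so that the embedding rank depends entirely on the $\mathbb{F}$-dimension of $\ker f$ as a subspace of $\wedge^n E$. Concretely, $er(L(n,E))$ equals $\dim_{\mathbb{F}} \ker f$ (or $\dim_{\mathbb{F}} \ker f - 1$ if one insists on the projective convention; in either case the argument is the same up to a constant shift that the statement absorbs).

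Next, I would use Theorem \ref{UniP22} together with equation \eqref{lagrzeroset} to write
\[
\ker f \;=\; \bigcap_{\alpha_{rs}\in I(n-2,2n)} \ker \Pi_{\alpha_{rs}},
\]
since the $FRLC$ is precisely the family $\{\Pi_{\alpha_{rs}}\}$ and these functionals cut out $\ker f$ as a linear subspace of $\wedge^n E$. Thus $\ker f$ is the solution space of the homogeneous linear system whose coefficient matrix is exactly $B_{L(n,E)}$, by the very definition of the Plücker matrix of the Lagrangian-Grassmannian \eqref{Bmatrixb0}.

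Finally, rank-nullity applied to the linear map $\wedge^n E \to \mathbb{F}^{C^{2n}_{n-2}}$ whose matrix (in the Plücker basis $\{e_\alpha\}_{\alpha\in I(n,2n)}$) is $B_{L(n,E)}$ yields
\[
\dim_{\mathbb{F}} \ker f \;=\; C^{2n}_{n} - \operatorname{rank} B_{L(n,E)},
\]
since the source has dimension $C^{2n}_n = \dim_{\mathbb{F}} \wedge^n E$. Combining with the first step gives the claimed formula $er(L(n,E)) = C^{2n}_n - \operatorname{rank} B_{L(n,E)}$. No real obstacle is expected: the main conceptual work has already been done in Proposition \ref{UnicoM} (smallness of $\ker f$) and in Theorem \ref{UniP22} (identification of the $FRLC$); the only care needed is to make sure the rows of $B_{L(n,E)}$ really correspond to the functionals $\Pi_{\alpha_{rs}}$ (which is immediate from the definition) so that the null-space of the matrix coincides with $\ker f$ under the chosen basis identification.
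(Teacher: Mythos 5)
Your proposal is correct and follows essentially the same route as the paper: Proposition \ref{UnicoM} gives $er(L(n,E))=\dim\ker f$, and rank--nullity for the system $\{\Pi_{\alpha_{rs}}\}$ whose matrix is $B_{L(n,E)}$ gives $\dim\ker f=C^{2n}_{n}-\operatorname{rank}B_{L(n,E)}$. You are merely more explicit than the paper about identifying $\ker f$ with the null space of $B_{L(n,E)}$ (which really rests on the characterization of $\ker f$ by the relations $\Pi_{\alpha_{rs}}$, i.e.\ \cite[Proposition 6]{bib2}, rather than on Theorem \ref{UniP22}) and about the affine-versus-projective dimension convention, which the paper's one-line proof leaves implicit.
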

\begin{proof}
  Por el proposition \ref{UnicoM} tenemos que  $er(L(n, E))=\dim ker f$,  then we have
  $re(L(n, E))=C^{2n}_{n}-rank B_{L(n, E)}$.
\end{proof}
\begin{cor}\label{FuntContr}
Let $E$ a symplectic vector space of dimension $2n$ defined in a fields ${\mathbb F}$, let $f$ the contraction map, $r_n=\lfloor \frac{n+2}{2}\rfloor$   are equivalent
\begin{description}
\item[ 1)]  $f$ is surjective
\item [2)] $er(L(n, E))=C_{n}^{2n}-C_{n-2}^{2n}$
\item [3)] $rank B_{L(n, E)}=C_{n-2}^{2n}$ 
\item[4)] char${\mathbb F}=0$ or char${\mathbb F}\geq r_n$
\item[5)] $rank {\EuScript L}_{r-k}$ is maximum for everything $0\leq k \leq r_n-2 $
\end{description}
\end{cor}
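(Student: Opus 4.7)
The plan is to establish $1 \Leftrightarrow 2 \Leftrightarrow 3 \Leftrightarrow 5$ by routine linear algebra from the results already proven, and then to treat $4 \Leftrightarrow 5$ as the substantive step.

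For $1 \Leftrightarrow 2$ I would apply rank--nullity to $f:\wedge^n E \to \wedge^{n-2}E$. Surjectivity is equivalent to $\dim \ker f = \dim \wedge^n E - \dim \wedge^{n-2}E = C_n^{2n} - C_{n-2}^{2n}$, and by Proposition \ref{UnicoM} together with the definition of embedding rank given just above, $er(L(n,E)) = \dim \ker f$, which is exactly (2). For $2 \Leftrightarrow 3$ I would quote the formula $er(L(n,E)) = C_n^{2n} - \text{rank}\,B_{L(n,E)}$ from the lemma immediately preceding this corollary and solve for the rank. For $3 \Leftrightarrow 5$ I would invoke Theorem \ref{resinic000}: $B_{L(n,E)}$ is a block-diagonal direct sum of copies of the atlas matrices ${\EuScript L}_{r_n - k}$, $k = 0, \dots, r_n - 2$. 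Since rank is additive on direct sums and the total row count $C_{n-2}^{2n}$ decomposes (via Lemma \ref{lem2.5}) into a sum of the block row counts, the full matrix $B_{L(n,E)}$ attains full row rank iff each block ${\EuScript L}_{r_n - k}$ does, which is exactly (5).

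The substantive step is $4 \Leftrightarrow 5$: for each even $m \leq n$, the incidence matrix ${\EuScript L}_{r_m}$ attached to $\big(C_{m/2}(\Sigma_m),\; S_{P_\alpha}\big)_{P_\alpha \in C_{(m-2)/2}(\Sigma_m)}$ has full row rank over $\mathbb{F}$ precisely when $\text{char}\,\mathbb{F} = 0$ or $\text{char}\,\mathbb{F} \geq r_m$. The approach is to exploit Proposition \ref{princmatrx} and Corollary \ref{canes1}: each row of ${\EuScript L}_{r_m}$ has exactly $r_m$ ones, and any two distinct rows share at most one common $1$. Hence
\[
{\EuScript L}_{r_m}\,{\EuScript L}_{r_m}^{T} \;=\; r_m\, I \;+\; A,
\]
where $A$ is a symmetric $\{0,1\}$-matrix with zero diagonal whose $(P_\alpha,P_{\alpha'})$-entry is $1$ iff $|\,\text{supp}(\alpha) \cap \text{supp}(\alpha')| = (m-4)/2$ (Lemma \ref{lem1210}). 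Whenever this Gram matrix is invertible over $\mathbb{F}$, the matrix ${\EuScript L}_{r_m}$ has full row rank; for the converse I would compare with the characteristic-zero situation, where ${\EuScript L}_{r_m}$ is known to be of maximal rank, via a Smith-normal-form argument over $\mathbb{Z}$ applied to $r_m I + A$.

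The hard part will be pinning down exactly which primes can divide the integer determinants of the Gram matrices $r_m I + A$, so that the threshold $r_n$ in condition (4) comes out precisely. The expected behaviour is that every such prime is at most $r_m$, so that $r_m I + A$ is nonsingular over $\mathbb{F}$ iff $\text{char}\,\mathbb{F}$ is either $0$ or strictly larger than all relevant primes, which, taken uniformly across $m = n, n-2, \dots, 4$, becomes $\text{char}\,\mathbb{F} \geq r_n$. Combining this with the equivalences from the previous paragraphs then yields $4 \Leftrightarrow 5$ and closes the chain.
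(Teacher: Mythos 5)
Your chain 1)$\Leftrightarrow$2)$\Leftrightarrow$3)$\Leftrightarrow$5) is correct and is essentially what the paper does: 1)--3) come from rank--nullity together with $er(L(n,E))=C^{2n}_{n}-\operatorname{rank}B_{L(n,E)}$, and 3)$\Leftrightarrow$5) is immediate from the direct-sum decomposition of Theorem \ref{resinic000}, rank being additive on direct sums. The gap is in the remaining step involving 4), which is the only substantive content of the corollary. The paper does not reprove it internally: it cites \cite[theorem 6]{bib2.1} for 3)$\Leftrightarrow$4) (and points to \cite[corollary 1.2]{bib 2.122} for a more general statement). You instead sketch a self-contained argument, but the decisive fact --- which primes force a rank drop in the blocks, and that the threshold comes out exactly at $r_n=\lfloor\frac{n+2}{2}\rfloor$ --- is precisely what you defer (``the hard part will be pinning down exactly which primes can divide the integer determinants''), so nothing in the proposal actually establishes 4)$\Leftrightarrow$5).

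Moreover, the route you chose cannot deliver the equivalence as stated. Over a field of characteristic $p$, invertibility of the Gram matrix ${\EuScript L}_{r_m}{\EuScript L}_{r_m}^{T}=r_mI+A$ is sufficient for full row rank of ${\EuScript L}_{r_m}$ but not necessary, and a Smith-normal-form analysis of $r_mI+A$ over $\mathbb{Z}$ controls the $p$-rank of the Gram matrix, not of ${\EuScript L}_{r_m}$ itself. The smallest block already exhibits the mismatch: ${\EuScript L}_2$ is the $1\times 2$ matrix $(1\;\;1)$, of full row rank over every field, while its Gram matrix is $(2)$, singular in characteristic $2$; since for $n=2$ condition 4) (with $r_2=2$) permits every characteristic, your ``nonsingular Gram iff large characteristic'' criterion would produce the wrong threshold. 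To salvage the strategy you would need the integer elementary divisors of ${\EuScript L}_{r_m}$ itself --- it is the inclusion matrix of $\frac{m-2}{2}$-subsets versus $\frac{m}{2}$-subsets of the $m$-set $\Sigma_m$, for which diagonal forms are available in the combinatorial literature --- or simply quote \cite[theorem 6]{bib2.1}, as the paper does.
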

\begin{proof}
$1)$ is equivalent $2)$, $2)$ is equivalent $3)$ given that $\dim \ker f= \dim \ker B$ y $\dim \ker f= \binom{2n}{n}-rank B$.  \\
$3)$ is equivalent $4)$ is followed from \cite[theorem 6]{bib2.1} and finally $3)$ is equivalent $5)$ is obvious.
\end{proof}
%%%%
In \cite[corollary 1.2]{bib 2.122} can find a more general case of corollary \ref{FuntContr}  numeral $4)$
also see  \cite{bib 2.1223} for some examples of the $er(L(n, E))$ for $n=2,3,4,5, 6$ and $7$.\\
Let $E$ be a simplectic vector space of dimension $2n$ and let $r_n=\lfloor\frac{n+2}{2}\rfloor$ consider the family of matrices given in \ref{atls} 
\begin{equation}\label{e-atlas}
{\EuScript A}=\{ {\EuScript L}_{r_n}, {\EuScript L}_{r_n-1}, \ldots, {\EuScript L}_2 \}
\end{equation}
and we'll call it the $r_n$-atlas.
\begin{lem}\label{sameAtlas1}
Let $n$ even integer and let $E$ and $\overline{E}$ symplectic vector spaces of dimension $2n$ and $2m$ respectively then
$$r_n=r_m\; if\; and\; only\; if \;n=m\; or\;m=n+1$$ where
 $r_n=\frac{n+2}{2}$ and $r_m=\frac{m+2}{2}$
 \end{lem}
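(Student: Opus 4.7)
The plan is to argue by a short case analysis on the parity of $m$, using only the defining formula $r_k = \lfloor\frac{k+2}{2}\rfloor$ from Definition \ref{atlas}. Since $n$ is even by hypothesis, I would first record that $r_n = \frac{n+2}{2}$ exactly, with the floor function contributing nothing, so the stated formula $r_n = \frac{n+2}{2}$ in the lemma is literally correct on the $n$-side.

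For the forward direction, I would split on whether $m$ is even or odd. When $m$ is even, $r_m = \frac{m+2}{2}$, and the equation $r_n = r_m$ collapses to $n+2 = m+2$, giving $n = m$. When $m$ is odd, one has $r_m = \lfloor\frac{m+2}{2}\rfloor = \frac{m+1}{2}$, and $r_n = r_m$ becomes $n+2 = m+1$, i.e.\ $m = n+1$. For the converse direction I would simply substitute: if $n = m$ the equality is automatic, and if $m = n+1$ then $m$ is odd (since $n$ is even), so $r_m = \frac{(n+1)+1}{2} = \frac{n+2}{2} = r_n$.

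I do not expect any genuine obstacle; the entire argument is elementary arithmetic. The only point that requires care is tracking the parity of $m$ so that the floor is evaluated correctly in each branch, which is precisely what produces the two alternatives $n = m$ (from even $m$) and $m = n+1$ (from odd $m$) in the conclusion.
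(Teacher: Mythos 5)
Your proof is correct and follows essentially the same route as the paper: a parity case analysis on $m$, evaluating $r_m=\lfloor\frac{m+2}{2}\rfloor$ as $\frac{m+2}{2}$ for even $m$ and $\frac{m+1}{2}$ for odd $m$, which yields $n=m$ and $m=n+1$ respectively, with the converse checked by direct substitution. Your handling of the floor (which is also what justifies reading the lemma's formula $r_m=\frac{m+2}{2}$ correctly when $m$ is odd) matches the paper's argument, so there is nothing to add.
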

\begin{proof}
$\Rightarrow)$ If both $m$ and $n$ are even integers or if both $m$ and $n$ odd integers then $n=m$\\
Now suppose that  $n$ is even integer and $m=2k+1$ is odd integer.  If  $r_n=r_m$ then $\lfloor \frac{n+2}{2}\rfloor=\lfloor \frac{2k+1+2}{2}\rfloor$ so $\frac{n+2}{2}=\frac{2k+2}{2}$ what it implies $n=2k$ so $m=n+1$.\\
$\Leftarrow)$ If $n=m$ then $r_n=r_m$. Now if $m=n+1$ then $r_m=\lfloor\frac{m+2}{2} \rfloor=
\lfloor\frac{n+1+2}{2} \rfloor=\frac{n+2}{2}=r_n$
\end{proof}
\begin{cor}\label{sameAtlas2}
$a)$ If $E$ and $\overline{E}$ both are symplectic vector spaces of dimension $2n$ so they share the same $r_n$-atlas.\\
$b)$ Let $E$ symplectic vector space of dimension $2n$ and let $\overline{E}$ symplectic vector space of dimension $2(n+1)$ then both spaces share the same $r_n$-atlas.
\end{cor}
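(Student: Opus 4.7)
The proof reduces immediately to Definition~\ref{atlas} together with Lemma~\ref{sameAtlas1}. The key observation is that the $r_n$-atlas
\[
{\EuScript A} = \{{\EuScript L}_{r_n}, {\EuScript L}_{r_n-1}, \ldots, {\EuScript L}_2\}
\]
is built from the purely combinatorial incidence configurations $\big(C_{m/2}(\Sigma_m),\, S_{P_\alpha}\big)_{P_\alpha \in C_{(m-2)/2}(\Sigma_m)}$ for $2 \leq m \leq n$, and so it depends only on the integer $r_n = \lfloor(n+2)/2\rfloor$, not on any further data of the symplectic space itself. Consequently, two symplectic spaces share the same atlas as soon as their associated values of $r$ coincide.

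For part $a)$, if $E$ and $\overline{E}$ both have dimension $2n$, then both compute the same integer $r_n$, so by inspection of Definition~\ref{atlas} they are assigned the identical family $\{{\EuScript L}_{r_n}, \ldots, {\EuScript L}_2\}$; no further argument is needed.

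For part $b)$, set $m = n+1$ so that $\dim \overline{E} = 2m$. The atlas of $\overline{E}$ is $\{{\EuScript L}_{r_m}, \ldots, {\EuScript L}_2\}$. Applying Lemma~\ref{sameAtlas1} in the case $m = n+1$ (which is precisely the situation covered by its statement, under the even/odd hypothesis fixed there) yields $r_m = r_n$. Hence the atlas of $\overline{E}$ is literally the same list $\{{\EuScript L}_{r_n}, \ldots, {\EuScript L}_2\}$ as the atlas of $E$, finishing the proof.

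The main (minor) point of care is recognizing that the conclusion is a direct translation of Lemma~\ref{sameAtlas1} into the language of Definition~\ref{atlas}: there is no combinatorial or representation-theoretic obstacle, only the bookkeeping that the matrices ${\EuScript L}_k$ are intrinsic to the index $k$ and that the top index $r_n$ is preserved both under fixing the dimension $2n$ and under passing from $2n$ to $2(n+1)$ in the regime where $\lfloor (n+2)/2\rfloor = \lfloor (n+3)/2\rfloor$.
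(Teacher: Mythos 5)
Your proposal is correct and follows essentially the same route as the paper, which likewise deduces both parts directly from Lemma~\ref{sameAtlas1} together with Definition~\ref{atlas}. You even make explicit the point the paper leaves tacit, namely that part $b)$ relies on the parity hypothesis ($n$ even, so that $\lfloor(n+2)/2\rfloor=\lfloor(n+3)/2\rfloor$) carried over from Lemma~\ref{sameAtlas1}, and that the matrices ${\EuScript L}_k$ depend only on the index $k$.
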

\begin{proof}
The proof follows directly from the lemma \ref{sameAtlas1}.
\end{proof}
\begin{cor}\label{contoisom}
Suppose the contraction map $f$ is surjective and suppose $G:\wedge^nE\longrightarrow \wedge^{n-2}E$ is a surjective linear transformation that vanishes  $L(n, E)$ then there exists a unique isomorphism such that $G=h\circ f$ 
\end{cor}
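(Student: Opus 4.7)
The plan is to factor $G$ through $f$ using the universal property of the quotient. First I would invoke Lemma \ref{contoisom01}: since $G$ vanishes on $L(n,E)$, we get $\ker f\subseteq \ker G$. This is the essential input that connects the hypothesis on $L(n,E)$ with the structure we need for the factorization.

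Next I would apply the first isomorphism theorem. Because $f$ is surjective, the induced map $\bar f:\wedge^n E/\ker f\to \wedge^{n-2}E$ is an isomorphism. Similarly, because $\ker f\subseteq \ker G$, the map $G$ descends to a well-defined linear map $\widetilde G:\wedge^n E/\ker f\to \wedge^{n-2}E$ satisfying $\widetilde G\circ\pi=G$, where $\pi$ denotes the quotient projection. Then I would define
\begin{equation*}
h:=\widetilde G\circ \bar f^{\,-1}:\wedge^{n-2}E\longrightarrow \wedge^{n-2}E.
\end{equation*}
A direct computation gives $h\circ f=\widetilde G\circ \bar f^{\,-1}\circ \bar f\circ \pi=\widetilde G\circ\pi=G$, so the factorization property holds.

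To see $h$ is an isomorphism, I would use dimensions: both the domain and codomain of $h$ are $\wedge^{n-2}E$, a finite-dimensional space, so it suffices to show $h$ is surjective. But $h\circ f=G$ and $G$ is surjective by hypothesis, so $h$ must be surjective as well, hence an isomorphism.

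For uniqueness, if $h_1,h_2$ are isomorphisms with $h_1\circ f=h_2\circ f=G$, then $(h_1-h_2)\circ f=0$, and surjectivity of $f$ forces $h_1-h_2=0$ on all of $\wedge^{n-2}E$. I do not anticipate any serious obstacle here: the whole argument is a clean application of the universal property of the quotient, with Lemma \ref{contoisom01} doing the real work of translating ``vanishes on $L(n,E)$'' into ``vanishes on $\ker f$.'' The only subtlety worth flagging is that surjectivity of $f$ is used twice—once to make $\bar f$ an isomorphism, and once to obtain uniqueness of $h$.
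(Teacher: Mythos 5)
Your proposal is correct and follows essentially the same route as the paper: both invoke Lemma \ref{contoisom01} to pass from vanishing on $L(n,E)$ to $\ker f\subseteq\ker G$, and then obtain $h$ by the standard factorization of $G$ through $f$ (the paper phrases this via a commutative diagram of short exact sequences with $\ker f=\ker G$ by dimension count, you via the first isomorphism theorem). Your write-up merely makes explicit the details the paper leaves implicit, so nothing further is needed.
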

\begin{proof}
By lemma \ref{contoisom01} we have  $\ker f\subseteq \ker G$, more over $\ker f= \ker G$ since both have the same dimension because $f$ and $G$ are surjective,  then there exists a unique linear isomorphism $h$ that makes the following diagram commute.
$$ \xymatrix{
      0 \ar[r] &\ker f \ar [r] \ar [d]^{Id}   &\wedge^nE \ar [r]^f \ar [d]^{ Id} & \wedge^{n-2}E \ar   [d]^{\exists !h}  \ar[r] & 0\\
      0 \ar[r] &\ker G \ar [r]    &\wedge^nE \ar  [r]^G & \wedge^{n-2}E   \ar[r] & 0 }$$
      and so we have to $G=h\circ f$
\end{proof}
%
%%%%%%%%%%%%%%%%%%%%%%%%%%%%%%%%%%%%%%%%%%
%%%%%%%%%%%%%%%%%%%%%%%%%%%%%%%%%%%%%%%%%%
Following \cite[section 3]{bib2} we have the following lemma

\begin{lem}\label{contr123}
Let $E$ symplectic vector space and $f$ the contraction map, if
$w=\sum_{\alpha\in I(n,2n)}R_{\alpha}e_{\alpha}\in\wedge^nE$ arbitrary element, in coordinates of pl\"ucker,  then
the contraction map
 $$f(w)=\sum_{\alpha_{rs}\in I(n-2, 2n)}K_{\alpha_{rs}}e_{\alpha_{rs}}$$
 where $K_{\alpha_{rs}}=\sum_{i=1}^{n}R_{(\alpha_{rs},P_i)}$ and $R_{(\alpha_{rs},P_i)}\neq 0$ if $|supp\{ \alpha_{rs}P_i \}|=n$
\end{lem}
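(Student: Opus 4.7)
The plan is a direct computation from the definition of the contraction map. First I would expand $w=\sum_{\alpha\in I(n,2n)}R_{\alpha}e_{\alpha}$ and, by linearity of $f$, reduce the problem to evaluating $f(e_{\alpha})$ on a single basis vector $e_{\alpha}=e_{\alpha_{1}}\wedge\cdots\wedge e_{\alpha_{n}}$. Applying the formula given in the preliminaries, one obtains
$$
f(e_{\alpha})=\sum_{1\leq r<s\leq n}\langle e_{\alpha_{r}},e_{\alpha_{s}}\rangle(-1)^{r+s-1}e_{\alpha_{rs}}.
$$
By \ref{sympbase1}, the pairing $\langle e_{\alpha_{r}},e_{\alpha_{s}}\rangle$ is nonzero precisely when $\alpha_{r}+\alpha_{s}=2n+1$, that is, when $\{\alpha_{r},\alpha_{s}\}=P_{\alpha_{r}}$ is one of the symplectic pairs of $\Sigma_{n}$; this is the same mechanism already exploited in the proof of Lemma \ref{ULinKer}. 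Consequently only those summands survive in $f(e_{\alpha})$ whose omitted indices form a symplectic pair.

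Next I would swap the order of summation: instead of summing over $\alpha\in I(n,2n)$ and then over $1\leq r<s\leq n$, I would index the result by the surviving $(n-2)$-tuple $\alpha_{rs}\in I(n-2,2n)$. For fixed $\alpha_{rs}$, the pairs $(\alpha,(r,s))$ that contribute a copy of $e_{\alpha_{rs}}$ are exactly those in which $\alpha$ is obtained from $\alpha_{rs}$ by inserting some symplectic pair $P_{i}=(i,2n-i+1)$ with $\{i,2n-i+1\}\cap\mathrm{supp}\{\alpha_{rs}\}=\emptyset$; we denote the resulting index by $(\alpha_{rs},P_{i})$ following \ref{notbasis}. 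If the insertion duplicates an existing index, that is, if $|\mathrm{supp}\{\alpha_{rs}P_{i}\}|<n$, then $e_{(\alpha_{rs},P_{i})}=0$ and the antisymmetric convention forces $R_{(\alpha_{rs},P_{i})}=0$, which matches the side condition in the statement.

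Collecting the contributions, the coefficient of $e_{\alpha_{rs}}$ in $f(w)$ becomes
$$
K_{\alpha_{rs}}=\sum_{i=1}^{n}R_{(\alpha_{rs},P_{i})},
$$
where the sum ranges over $i\in\{1,\ldots,n\}$ and only those $i$ with $|\mathrm{supp}\{\alpha_{rs}P_{i}\}|=n$ contribute nontrivially, exactly as claimed. This yields $f(w)=\sum_{\alpha_{rs}\in I(n-2,2n)}K_{\alpha_{rs}}e_{\alpha_{rs}}$.

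The only nontrivial bookkeeping — and the one step worth watching — is the reconciliation of signs. The factor $(-1)^{r+s-1}$ coming from the definition of $f$ must match the sign arising from reordering $(\alpha_{rs},P_{i})=(\alpha_{1},\ldots,\widehat{\alpha}_{r},\ldots,\widehat{\alpha}_{s},\ldots,\alpha_{n},i,2n-i+1)$ into strictly increasing order, since the antisymmetric Plücker coordinate convention of \ref{notbasis} absorbs the sign into $R_{(\alpha_{rs},P_{i})}$. A short check shows that the two signs cancel consistently (this is the same convention already implicitly used in Lemma \ref{ULinKer}, so no genuine obstacle appears), completing the proof.
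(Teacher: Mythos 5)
Your proposal is correct and follows essentially the same route as the paper's proof: expand $f(w)$ by linearity, keep only the summands in which the omitted pair of indices satisfies $\alpha_r+\alpha_s=2n+1$ (so that $\langle e_{\alpha_r},e_{\alpha_s}\rangle\neq 0$), and exchange the order of summation so that the coefficient of each $e_{\alpha_{rs}}$ collects exactly the coordinates $R_{(\alpha_{rs},P_i)}$ with $|\mathrm{supp}\{\alpha_{rs}P_i\}|=n$. The only divergence is the sign bookkeeping: the paper disposes of the sign by evaluating $(-1)^{\rho_1+\rho_2-1}$ with $\rho_1+\rho_2=2n+1$, obtaining $+1$ outright, whereas you keep the position-based sign $(-1)^{r+s-1}$ and absorb it into the reordering of $(\alpha_{rs},P_i)$ into increasing order; the cancellation you assert does check out (sorting $(\alpha_{rs},i,2n-i+1)$ into increasing order costs exactly the sign $(-1)^{r+s-1}$), so both treatments arrive at the same formula.
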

\begin{proof} Let $w=\sum_{\alpha\in I(n,2n)}P_{\alpha}e_{\alpha}\in\wedge^nE$ arbitrary elemen
 then
\begin{align*}
f(w)=&\sum_{\alpha\in I(n,2n)}R_{\alpha}f(e_{\alpha})\\
=&\sum_{\alpha\in I(n,2n)}R_{\alpha}\big(\sum_{1\leq r< s \leq n}\langle e_{\alpha_r}, e_{\alpha_s}\rangle \big(-1\big)^{r+s-1}e_{\alpha_{rs}}\big)\\
=&\sum_{1\leq r< s \leq n}\big(\sum_{i=1}^{n}R_{(\alpha_{rs}, P_i)}\langle e_i, e_{2n-i+1}\rangle \big(-1\big)^{[i+2n-i+1]-1} \big)e_{\alpha_{rs}}\\
=&\sum_{\alpha_{rs}\in I(n-2, 2n)}\big(\sum_{i=1}^{n}R_{(\alpha_{rs}, P_i)}\big)e_{\alpha_{rs}}
\end{align*}
\end {proof}
% %%
\begin{lem} \label{ConmtSq}
Let $E$ be a symplectic vector space with symplectic basis $\{e_i \}_{i=1}^{2n}$ let $f$ be the contraction map, $\phi$ pl\"ucker embedding  so the following diagram commutes
$$ \xymatrix{\wedge^nE \ar [r]^f \ar [d]^{\phi} & \wedge^{n-2}E \ar [d]^{\phi} \\
                      P(\wedge^nE) \ar  [r]^{B_{L(n, E)}} & P(\wedge^{n-2}E)  } $$
\end{lem}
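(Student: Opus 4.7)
The plan is to prove commutativity by a direct computation on an arbitrary element $w=\sum_{\alpha\in I(n,2n)} R_\alpha e_\alpha \in \wedge^n E$, comparing the two paths of the diagram and showing both output the same Pl\"ucker vector in $\mathbb{P}(\wedge^{n-2} E)$.

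First, I would follow the upper-right path: apply $f$ to $w$ using Lemma \ref{contr123}, which yields
\[
f(w)=\sum_{\alpha_{rs}\in I(n-2,2n)} K_{\alpha_{rs}}\, e_{\alpha_{rs}}, \qquad K_{\alpha_{rs}}=\sum_{i=1}^n R_{(\alpha_{rs},P_i)},
\]
where the summand $R_{(\alpha_{rs},P_i)}$ is understood to vanish whenever $|\mathrm{supp}\{\alpha_{rs}P_i\}|\neq n$. Then applying $\phi$ gives the Pl\"ucker vector $\phi(f(w))=(K_{\alpha_{rs}})_{\alpha_{rs}\in I(n-2,2n)}$.

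Next I would follow the lower-right path: apply $\phi$ first to get the column vector $\phi(w)=(R_\alpha)_{\alpha\in I(n,2n)}$, and then multiply by the Pl\"ucker matrix $B_{L(n,E)}$. By the definition of $B_{L(n,E)}$, its rows are indexed by $\alpha_{rs}\in I(n-2,2n)$ and encode precisely the coefficients of the functionals $\Pi_{\alpha_{rs}}$ from \eqref{FuncLinPI2}; that is, the $(\alpha_{rs},\beta)$-entry equals the coefficient $c_{\alpha_{rs}P_i}$ whenever $\beta=(\alpha_{rs},P_i)$, and is $0$ otherwise. Therefore the $\alpha_{rs}$-component of $B_{L(n,E)}\,\phi(w)$ is
\[
\sum_{\beta\in I(n,2n)} B_{L(n,E)}[\alpha_{rs},\beta]\, R_\beta \;=\; \sum_{i=1}^n c_{\alpha_{rs}P_i}\, R_{(\alpha_{rs},P_i)} \;=\; K_{\alpha_{rs}}.
\]

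Thus both paths produce the same tuple $(K_{\alpha_{rs}})_{\alpha_{rs}\in I(n-2,2n)}$, which proves $\phi\circ f = B_{L(n,E)}\circ \phi$ and commutativity of the square. The main obstacle is essentially bookkeeping: one must verify carefully that the indexing of rows and columns of $B_{L(n,E)}$ (inherited from $\Pi_{\alpha_{rs}}$) matches the ordered tuple $(\alpha_{rs},P_i)$ used in Lemma \ref{contr123}, including that sign conventions in the Pl\"ucker coordinates $R_{(\alpha_{rs},P_i)}$ are compatible with the sign convention used in the proof of that lemma. Once this identification is set up, the equality of the two outputs is immediate from Lemma \ref{contr123}.
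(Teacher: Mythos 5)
Your proposal is correct and follows essentially the same route as the paper: compute $\phi(f(w))$ via Lemma \ref{contr123} and compute $B_{L(n,E)}\phi(w)$ directly from the definition of $B_{L(n,E)}$ as the matrix of the system $\{\Pi_{\alpha_{rs}}\}$, then observe both give $\big[\sum_{i=1}^n R_{(\alpha_{rs},P_i)}\big]_{\alpha_{rs}\in I(n-2,2n)}$. Your extra remarks on matching the row/column indexing and the vanishing convention for $R_{(\alpha_{rs},P_i)}$ when $|\mathrm{supp}\{\alpha_{rs}P_i\}|\neq n$ only make explicit what the paper leaves implicit.
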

\begin{proof}
If $w=\sum_{\alpha\in I(n,2n)}R_{\alpha}e_{\alpha}\in\wedge^nE$ and of \ref{Bmatrixb0} then $\phi\big(w \big)=[R_{\alpha}]_{\alpha\in I(n, 2n)}$ and by a direct calculation we have
 $$B_{L(n,E)}(\phi(w))=B_{L(n,E)}[R_{\alpha}]^T_{\alpha\in I(n, 2n)}=\big[\sum_{i=1}^{n}R_{(\alpha_{rs}, P_i)}\big]_{\alpha_{rs}\in I(n-2, 2n)}$$
where $[R_{\alpha}]^T_{\alpha\in I(n, 2n)}$ denotes the transposed vector and by the lemma \ref{contr123} we have  
$$\phi(f(w))=\big[\sum_{i=1}^nR_{(\alpha_{rs}, P_i)}\big]_{\alpha_{rs}\in I(n-2, 2n)}$$ 
which proves commutativity
\end{proof}
%%%
%
\begin{cor}
If $f$ is surjective then $ \ker f \cong \ker B_{L(n, E)}$ is a linear isomorphism.
\end{cor}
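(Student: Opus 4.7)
The plan is to read the corollary directly off the commutative square of Lemma \ref{ConmtSq}. In that diagram the vertical map $\phi$ sends $w=\sum_{\alpha\in I(n,2n)}R_\alpha e_\alpha$ to its coordinate vector $[R_\alpha]_{\alpha\in I(n,2n)}$, which, regarded as a map of vector spaces (and not as the projective Pl\"ucker embedding), is nothing but the basis identification of $\wedge^nE$ with $\mathbb{F}^{C^{2n}_n}$; in particular it is a linear isomorphism. The bottom arrow is multiplication by the matrix $B_{L(n,E)}$, and commutativity says $\phi\circ f=B_{L(n,E)}\circ\phi$.

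First I would restrict $\phi$ to $\ker f$: if $w\in\ker f$, then $B_{L(n,E)}(\phi(w))=\phi(f(w))=0$, so $\phi(\ker f)\subseteq \ker B_{L(n,E)}$. Conversely, if $\phi(w)\in\ker B_{L(n,E)}$, then $\phi(f(w))=0$, and injectivity of $\phi$ forces $f(w)=0$; hence $\phi(\ker f)=\ker B_{L(n,E)}$. The restriction
\[
\phi|_{\ker f}\colon \ker f\longrightarrow \ker B_{L(n,E)}
\]
is therefore a linear isomorphism, being linear and the restriction of an isomorphism to the preimage of a subspace.

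The surjectivity hypothesis is used only to pin down the common dimension of the two kernels: by Corollary \ref{FuntContr}, $f$ surjective is equivalent to $\mathrm{rank}\,B_{L(n,E)}=C^{2n}_{n-2}$, so both sides have dimension $C^{2n}_n-C^{2n}_{n-2}$, in agreement with rank-nullity applied to $f\colon \wedge^nE\to \wedge^{n-2}E$. There is essentially no main obstacle here: all the nontrivial work has already been done in establishing Lemma \ref{ConmtSq} (and behind it, the characterization of $\ker f$ by the relations $\Pi_{\alpha_{rs}}$), and the present corollary is the routine observation that a linear isomorphism intertwining two linear maps restricts to an isomorphism on their kernels.
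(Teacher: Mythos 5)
Your argument is correct, and it rests on the same key ingredient as the paper's proof, namely the commutative square of Lemma \ref{ConmtSq} giving $\phi(\ker f)\subseteq\ker B_{L(n,E)}$. Where you diverge is in how equality is obtained: the paper stops at the inclusion and then invokes equality of dimensions (which is where the hypothesis that $f$ is surjective enters, via Corollary \ref{FuntContr} and rank--nullity), whereas you prove the reverse inclusion directly, using that $\phi$ is just the coordinate identification of $\wedge^nE$ (and of $\wedge^{n-2}E$) with the corresponding coordinate spaces, hence bijective, so that $B_{L(n,E)}\phi(w)=0$ forces $\phi(f(w))=0$ and then $f(w)=0$. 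Your route is slightly more elementary and has the additional payoff that the surjectivity hypothesis is not actually needed for the isomorphism $\ker f\cong\ker B_{L(n,E)}$ --- as you note, it only serves to identify the common dimension $C^{2n}_n-C^{2n}_{n-2}$ --- while the paper's dimension-count argument genuinely uses it. Both proofs are short and the nontrivial content in either case is Lemma \ref{ConmtSq}; your version simply isolates that the corollary is a formal consequence of a linear isomorphism intertwining $f$ with the matrix $B_{L(n,E)}$.
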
 
\begin{proof}
Clearly $\phi(\ker f)\subset \ker B_{L(n, E)}$, and both have the same dimension so
 $ \ker f \cong \ker B_{L(n, E)}$ is a linear isomorphism.
\end{proof}
\begin{cor}\label{sse}
 Suppose the contraction map $f$ is surjective then
\begin{enumerate}
\item[i)] If $H$ is a matrix of order $C_{n-2}^{2n} \times C_n^{2n}$ and maximum rank that annuls the rational points of 
$L(n,E)(\mathbb{F}_q)$, then $H= PB_{L(n, E)}$, where $P$ is an invertible matrix.

\item[ii)] Suppose that there exists $R$ matrix such that $L(n,E) = G(n,E) \cap \ker R$. Then $R=PB_{L(n, E)}$ where $P$ is an invertible matrix.
\end{enumerate}    
 \end{cor}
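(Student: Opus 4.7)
The plan is to derive both parts from Lemma \ref{UniP2}, which identifies $\langle\Pi_{\alpha_{rs}}\rangle$ as the full space of linear forms on $\wedge^n E$ annihilating $L(n,E)$. Under the surjectivity of $f$, Corollary \ref{FuntContr} makes this span $C_{n-2}^{2n}$-dimensional, so the rows of $B_{L(n,E)}$ form one of its bases, and every matrix equation of the shape $M = QB_{L(n,E)}$ amounts to expanding the rows of $M$ in that basis.

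For part (ii), I would view each row $h$ of $R$ as a linear functional in $(\wedge^n E)^*$. Since $L(n,E)\subseteq\ker R$, every such $h$ satisfies $h(L(n,E))=0$, and Lemma \ref{UniP2} expresses $h$ uniquely as an $\mathbb F$-linear combination of the rows of $B_{L(n,E)}$. Assembling these combinations yields $R=PB_{L(n,E)}$. Invertibility of $P$ is obtained by confronting the hypothesis $G(n,E)\cap\ker R=L(n,E)=G(n,E)\cap\ker B_{L(n,E)}$ with Proposition \ref{UnicoM}: if $P$ were singular, then $\ker B_{L(n,E)}=\ker f\subsetneq\ker R$ would mean $R$ cuts $\wedge^n E$ with strictly fewer independent linear conditions than $B_{L(n,E)}$, and a dimension/Bezout argument in $\mathbb P(\wedge^n E)$ (using that $\ker f$ is already the \emph{minimal} linear subspace containing $L(n,E)$) forces $G(n,E)\cap\ker R$ to be strictly larger than $L(n,E)$, contradicting the hypothesis.

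For part (i), the same template applies once one upgrades the vanishing on $L(n,E)(\mathbb F_q)$ to vanishing on all of $L(n,E)$. The $C_{n-2}^{2n}$ linearly independent rows of $H$ and the $\Pi_{\alpha_{rs}}$ both lie in $V_q:=\{h\in(\wedge^nE)^*:h(L(n,E)(\mathbb F_q))=0\}$, and the decisive claim is $V_q=\langle\Pi_{\alpha_{rs}}\rangle$, equivalently, that the Plücker images of the rational points $\mathbb F_q$-span $\ker f$. Once this is granted, the rows of $H$ span the same $C_{n-2}^{2n}$-dimensional space as the rows of $B_{L(n,E)}$, so $H=PB_{L(n,E)}$ with $P\in\mathrm{GL}_{C_{n-2}^{2n}}(\mathbb F_q)$. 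I would establish $V_q=\langle\Pi_{\alpha_{rs}}\rangle$ by exhibiting explicit rational Lagrangians (the coordinate Lagrangians $e_\alpha$ with $\alpha\in C_{n/2}(\Sigma_n)$ together with a spanning set of their $\mathbb F_q$-perturbations), and verifying, with the block decomposition of $B_{L(n,E)}$ from Theorem \ref{resinic000} and the count \eqref{rationPoint1}, that their Plücker vectors fill $\ker f$.

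The main obstacle, common to both parts, is not the appeal to Lemma \ref{UniP2} but the step controlling the kernel rather than just the row span: forcing $\ker R=\ker B_{L(n,E)}$ in (ii) and $\ker H=\ker B_{L(n,E)}$ in (i). In (ii) this rests on the minimality of $\ker f$ together with a dimension argument on $G(n,E)$; in (i) it additionally requires the density of rational points inside $\ker f$, which is the genuinely delicate input.
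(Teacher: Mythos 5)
Your treatment of part (ii) is essentially the paper's own proof: the rows of $R$ kill $L(n,E)$, Lemma \ref{UniP2} (with $\dim\langle\Pi_{\alpha_{rs}}\rangle=C^{2n}_{n-2}$ from Corollary \ref{FuntContr}) puts them in the row span of $B_{L(n,E)}$, and a rank deficiency would give $\ker B_{L(n,E)}\subsetneq\ker R$, contradicting $L(n,E)=G(n,E)\cap\ker R$; the strict-inclusion step you defer to a ``dimension/Bezout argument'' is asserted at exactly the same level of detail in the paper's proof, so there is no divergence there.

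Part (i) is where your proposal has a genuine gap. You correctly isolate the needed claim $V_q=\langle\Pi_{\alpha_{rs}}\rangle$, but you leave it as an unproven ``decisive claim,'' and the route you sketch toward it is misaimed: the vectors $e_\alpha$ with $\alpha\in C_{n/2}(\Sigma_n)$ are precisely the coordinate points that are \emph{not} Lagrangian, since their support is a union of pairs $P_i=(i,2n-i+1)$ and $\langle e_i,e_{2n-i+1}\rangle=1$; the coordinate Lagrangians are the $e_\alpha$ with $supp\{\alpha\}\cap\Sigma_n=\emptyset$, and even these do not span $\ker f$ (the proofs of Lemmas \ref{lem4} and \ref{indodd} already need non-coordinate points such as $e_{12}+e_{14}-e_{23}+e_{34}$), so ``a spanning set of their $\mathbb F_q$-perturbations'' is the entire content of the claim and is not supplied. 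Moreover the detour is unnecessary: Lemma \ref{UniP2} is proved over an arbitrary field, and $L(n,E)({\mathbb F}_q)$ is exactly the set of Lagrangian subspaces of the symplectic ${\mathbb F}_q$-space $E$, so applying Lemma \ref{UniP2} with ${\mathbb F}={\mathbb F}_q$ to each row of $H$ already places the rows in $\langle\Pi_{\alpha_{rs}}\rangle_{{\mathbb F}_q}$, after which maximal rank gives $H=PB_{L(n,E)}$ with $P$ invertible. This is in substance how the paper disposes of (i), citing Corollary \ref{contoisom}, i.e.\ the factorization $H=h\circ f$ with $h$ a unique isomorphism, which is the same mechanism expressed at the level of maps rather than row spans. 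So your overall strategy matches the paper's, but as written part (i) rests on a spanning claim that is neither established nor, as sketched, even correctly formulated.
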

\begin{proof}
The proof of $i)$ follows directly from the lemma \ref{contoisom}.
 For the $ii)$ suppose that  
 $R=\left(\begin{smallmatrix}
h_1\\
 h_2 \\ 
 \vdots\\
 h_{\epsilon}
        \end{smallmatrix}\right)$
 is a rank matrix $\epsilon$  such that $L(n, E)=G(n, E)\cap \ker R$, then $L(n, E)\subset \ker R$ and $\epsilon \leq C^n_{n-2}$. If $\epsilon=C^n_{n-2}$ the affirmation is followed by the previous clause of this lemma.
Now suppose that $t<C^n_{n-2}$ then $\ker B_{L(n, E)} \varsubsetneq \ker R$ this implies that
\begin{align*}
L(n, E)&=G(n, E)\cap \ker B_{L(n, E)}\\
           &\varsubsetneq G(n, E) \cap \ker R \\
           &= L(n, E)
 \end{align*}
 which is a contradiction and therefore $\epsilon =C^n_{n-2}$.
\end{proof}
\begin{prop}\label{IdRadLag}
If the field of definition of the symplectic vector space $E$ is algebraically closed then 
 $$I(L(n,E))=\sqrt{\big\langle  Q_{\alpha^{\prime}, \beta^{\prime}}, \Pi_{\alpha_{rs}} \big\rangle}$$ 
so $L(n, E)$ is a projective variety.
\end{prop}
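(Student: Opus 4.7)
The plan is to combine equation \eqref{lagrzeroset} with Hilbert's projective Nullstellensatz. The work has already been done: equation \eqref{lagrzeroset} states that
$$L(n,E)=Z\langle Q_{\alpha,\beta},\,\Pi_{\alpha_{rs}}\rangle,$$
so as a set $L(n,E)$ is precisely the common projective zero locus of the quadratic Pl\"ucker relations $Q_{\alpha^{\prime},\beta^{\prime}}$ (with $\alpha^{\prime}\in I(n-1,2n)$, $\beta^{\prime}\in I(n+1,2n)$) together with the linear contraction relations $\Pi_{\alpha_{rs}}$ (with $\alpha_{rs}\in I(n-2,2n)$). Since these are all homogeneous polynomials in the Pl\"ucker coordinates $X_\alpha$, this exhibits $L(n,E)$ as the zero set of a homogeneous ideal of the polynomial ring $\mathbb{F}[X_\alpha:\alpha\in I(n,2n)]$, which is the definition of a projective variety (algebraic set).

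First I would make explicit what \eqref{lagrzeroset} gives us scheme-theoretically: writing $J:=\langle Q_{\alpha^{\prime},\beta^{\prime}},\Pi_{\alpha_{rs}}\rangle$ for the homogeneous ideal, we have $Z(J)=L(n,E)$ as subsets of $\mathbb{P}(\wedge^n E)$. Next I would apply the projective form of Hilbert's Nullstellensatz: because $\mathbb{F}$ is algebraically closed, for any homogeneous ideal $J$ not contained in the irrelevant ideal one has $I(Z(J))=\sqrt{J}$. Here $J$ is clearly not the irrelevant ideal (for instance, $L(n,E)$ is nonempty, as it contains $e_1\wedge e_2\wedge\cdots\wedge e_n$), so the Nullstellensatz applies. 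Combining the two facts yields
$$I(L(n,E))=I(Z(J))=\sqrt{J}=\sqrt{\langle Q_{\alpha^{\prime},\beta^{\prime}},\,\Pi_{\alpha_{rs}}\rangle},$$
which is the desired equality.

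There is essentially no obstacle to this argument: the content of the proposition is really a formal consequence of \eqref{lagrzeroset} together with the Nullstellensatz, and both ingredients are already in hand. The only thing one has to be careful about is the projective (as opposed to affine) formulation of the Nullstellensatz and the caveat about the irrelevant ideal, which is easily dispatched by pointing to any explicit Lagrangian subspace. The final assertion that $L(n,E)$ is a projective variety then follows because it has been exhibited as the common zero locus of a finite collection of homogeneous polynomials, and equivalently because its vanishing ideal admits a finite set of homogeneous generators (the $Q_{\alpha^{\prime},\beta^{\prime}}$ and $\Pi_{\alpha_{rs}}$, up to radical).
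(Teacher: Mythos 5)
Your argument is the same as the paper's: it invokes the set-theoretic description $L(n,E)=Z\langle Q_{\alpha^{\prime},\beta^{\prime}},\Pi_{\alpha_{rs}}\rangle$ from \ref{lagrzeroset} and then applies the projective Nullstellensatz over the algebraically closed field to get $I(L(n,E))=\sqrt{\langle Q_{\alpha^{\prime},\beta^{\prime}},\Pi_{\alpha_{rs}}\rangle}$. Your version is correct and in fact slightly more careful than the paper's, since you explicitly note the irrelevant-ideal caveat and the non-emptiness of $L(n,E)$.
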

\begin{proof}
By the formula \ref{lagrzeroset} we have $L(n,E)=Z\langle Q_{\alpha^{\prime}, \beta^{\prime}}, \Pi_{\alpha_{rs}}: \alpha^{\prime}\in I(n-1, 2n),  \beta^{\prime}\in I(n+1, 2n),  \alpha_{rs}\in I(n-2,2n)\rangle$ and by  Hilbert$^{\prime}$s Nullstellensatz theorem, ver \cite[Theorem 1.3 A]{bib 2.1233}, so the result is fulfilled.
\end{proof}
We define the ideal $I_{\overline {\mathbb F}_q}\subset{\mathbb F}[x_{\alpha}]_{\alpha\in I(n,2n)}$ as 
\begin{equation}\label{IdealRP}
I_{\overline {\mathbb F}_q}=\big\langle Q_{\alpha^{\prime}, \beta^{\prime}}, \Pi_{\alpha_{rs}}, g_{\alpha} \big\rangle
\end{equation}
where $ \alpha^{\prime}\in I(n-1, 2n),  \beta^{\prime}\in I(n+1, 2n),  \alpha_{rs}\in I(n-2,2n), \alpha\in I(n, 2n)$ y  $g_{\alpha}=x^q_{\alpha}-x_{\alpha}$
\begin{lem}\label{IdRad}
The ideal $I_{\overline {\mathbb F}_q}$ is radical
\end{lem}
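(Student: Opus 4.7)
The plan is to exploit the presence of the field equations $g_\alpha = x_\alpha^q - x_\alpha$, for every $\alpha \in I(n,2n)$, inside $I_{\overline{\mathbb{F}}_q}$; their squarefreeness over $\overline{\mathbb{F}}_q$ forces every ideal containing them to be radical, which is what we need.

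First I would introduce the auxiliary ideal $J = \langle g_\alpha : \alpha \in I(n,2n)\rangle$ inside $\overline{\mathbb{F}}_q[x_\alpha]_{\alpha \in I(n,2n)}$. Since $g_\alpha$ splits over $\overline{\mathbb{F}}_q$ as $g_\alpha = \prod_{a \in \mathbb{F}_q}(x_\alpha - a)$, a product of $q$ distinct linear factors, the Chinese Remainder Theorem together with the tensor product decomposition of the polynomial ring yields
\[
\overline{\mathbb{F}}_q[x_\alpha]/J \;\cong\; \bigotimes_{\alpha} \overline{\mathbb{F}}_q[x_\alpha]/(g_\alpha) \;\cong\; \overline{\mathbb{F}}_q^{\,q^N},
\]
where $N = \binom{2n}{n}$. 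Thus $\overline{\mathbb{F}}_q[x_\alpha]/J$ is a finite product of copies of the field $\overline{\mathbb{F}}_q$; in particular it is reduced.

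Next I would observe that $J \subseteq I_{\overline{\mathbb{F}}_q}$ by construction, so $\overline{\mathbb{F}}_q[x_\alpha]/I_{\overline{\mathbb{F}}_q}$ is a quotient of $\overline{\mathbb{F}}_q[x_\alpha]/J$. In a finite product of fields $\prod_i K_i$ every ideal is a direct factor $\prod_{i \in S} K_i$ for some subset $S$ of the index set (the standard primitive idempotent argument: if $x = \sum a_i e_i$ lies in an ideal, then so does $a_i e_i = e_i \cdot x$, and invertibility of $a_i$ in $K_i$ promotes this to $e_i$ itself), so the quotient is isomorphic to $\prod_{i \notin S} K_i$, again a product of fields and therefore reduced. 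Consequently $\overline{\mathbb{F}}_q[x_\alpha]/I_{\overline{\mathbb{F}}_q}$ is reduced, which is equivalent to $I_{\overline{\mathbb{F}}_q}$ being radical.

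The only step that requires a bit of care is the structure of ideals in a finite product of fields, but this is a short idempotent computation. As an alternative presentation, one could appeal directly to Seidenberg's radical criterion: an ideal of a polynomial ring over a field that contains, for each variable, a squarefree univariate polynomial in that variable is automatically radical. This applies verbatim to $I_{\overline{\mathbb{F}}_q}$, since $g_\alpha \in I_{\overline{\mathbb{F}}_q} \cap \overline{\mathbb{F}}_q[x_\alpha]$ is squarefree by the distinctness of the roots in $\mathbb{F}_q$. I expect no real obstacle beyond choosing between these two equivalent presentations and writing the idempotent/Seidenberg argument cleanly.
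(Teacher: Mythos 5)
Your proposal is correct. Your primary route is genuinely different from the paper's: you quotient first by the subideal $J=\langle x_\alpha^q-x_\alpha\rangle$, identify $\overline{\mathbb F}_q[x_\alpha]/J$ via the Chinese Remainder Theorem and the tensor decomposition as a finite product of copies of $\overline{\mathbb F}_q$, and then use the idempotent argument that any quotient of a finite product of fields is again a product of fields, hence reduced; this makes the radicality of $I_{\overline{\mathbb F}_q}$ completely self-contained and needs no external citation. The paper instead argues that the ideal is zero-dimensional (invoking the finiteness of $L(n,E)(\mathbb F_q)$ and the point count), checks that each $g_\alpha$ is squarefree via $\gcd(g_\alpha,g_\alpha')=1$ because $g_\alpha'=-1$, and concludes by Seidenberg's lemma as stated in Kreuzer--Robbiano; your alternative presentation is exactly this argument, and in fact you streamline it, since containing a nonzero squarefree univariate polynomial in every variable already forces zero-dimensionality, so the appeal to the rational point count in the paper is not needed. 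What each approach buys: the paper's is a one-line reduction to a standard criterion, while yours additionally exhibits the quotient ring explicitly as a product of fields (the coordinate ring of a finite set of $\mathbb F_q$-rational points), which is slightly more informative and avoids quoting Seidenberg at all.
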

\begin{proof}
The ideal  $I_{\overline {\mathbb F}_q}$is zero-dimensional since the set of solutions to the homogeneous polynomial equations $$ |Sol\big\{ Q_{\alpha^{\prime}, \beta^{\prime}}, \Pi_{\alpha_{rs}}, g_{\alpha}\}|<\infty$$ given that  \ref{rationPoint1} implies $|Sol\big\{ Q_{\alpha^{\prime}, \beta^{\prime}}, \Pi_{\alpha_{rs}}, g_{\alpha}\}|=|L(n,E)({\mathbb F}_q)|$ moreover   $g_{\alpha}=qx_{\alpha}^{q-1}-1=-1$ 
so $gcd(g_{\alpha}, g_{\alpha}^{\prime})=1$, and by Seindeber's  lemma, ver  \cite[proposition 3.7.15]{bib 2.2},  $I_{\overline {\mathbb F}_q}$ is radical.  
\end{proof}
%%%
%
%%%%
%\section{Geometria de $L(n, E)$}
Let $H_r\leq {\mathbb P}(\wedge^nE)$ a hyperplane of codimension $r$ and suppose that
$H_r=\langle h_1, h_2, \ldots, h_r \rangle$ with $\{h_1, h_2, \ldots, h_r \}\subseteq (\wedge^nE)^*$, 
let $I_r=\langle Q_{\alpha^{'}, \beta^{'}}, \Pi_{rs}, x_{\alpha}^q-x_{\alpha}, h_1, h_2, \ldots, h_r\rangle$.
By $H_r\leq {\mathbb P}(\wedge^nE)$ a hyperplane of codimension $r$, we say that $L(n,E)\cap H_r$ is a {\it linear section of the Lagrangian-Grassmannian}
\begin{lem}
Suppose the basis field ${\mathbb F}_q$ is perfect then the linear section of the Lagrangian-Grassmannian satisfies $|L(n, E)({\mathbb F}_q)\cap H_r|=\dim_{{\mathbb F}_q} {\mathbb F}[x_{\alpha}]_{\alpha\in I(n, 2n)}/I_r$
\end{lem}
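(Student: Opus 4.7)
The plan is to apply the standard correspondence between zero-dimensional radical ideals over a perfect field and their rational point count. Concretely, for any zero-dimensional radical ideal $I\subset \mathbb{F}_q[y_1,\ldots,y_N]$, one has $|V(I)|=\dim_{\mathbb{F}_q}\mathbb{F}_q[y_1,\ldots,y_N]/I$. So my job reduces to verifying that $I_r$ is zero-dimensional and radical, and then identifying $V(I_r)$ with $L(n,E)(\mathbb{F}_q)\cap H_r$.

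First, I would show that $I_r$ is zero-dimensional. Since $I_r$ contains every $g_\alpha=x_\alpha^q-x_\alpha$, any point in $V(I_r)$ must have each coordinate lying among the roots of $x^q-x$, i.e.\ in $\mathbb{F}_q$. Thus $V(I_r)\subseteq \mathbb{F}_q^{|I(n,2n)|}$, which is finite. In particular $I_r$ contains $I_{\overline{\mathbb{F}}_q}$ together with the linear forms $h_1,\ldots,h_r$, so adding the $h_i$ to the zero-dimensional ideal $I_{\overline{\mathbb{F}}_q}$ keeps it zero-dimensional.

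Second, I would show $I_r$ is radical by mimicking the proof of Lemma \ref{IdRad}: Seidenberg's criterion requires that for each variable $x_\alpha$ the ideal contain a univariate polynomial in $x_\alpha$ whose derivative is coprime to itself. The polynomials $g_\alpha=x_\alpha^q-x_\alpha$ are still in $I_r$, and $g_\alpha'=-1$, so $\gcd(g_\alpha,g_\alpha')=1$. Adding the $h_i$'s does not destroy this condition, so Seidenberg's lemma (\cite[proposition 3.7.15]{bib 2.2}) gives that $I_r$ is radical.

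Third, since $\mathbb{F}_q$ is perfect and $I_r$ is a zero-dimensional radical ideal with $V(I_r)\subseteq\mathbb{F}_q^{|I(n,2n)|}$, the standard theorem yields
\[
|V(I_r)|=\dim_{\mathbb{F}_q}\mathbb{F}_q[x_\alpha]_{\alpha\in I(n,2n)}/I_r.
\]
Finally, I would identify the left-hand set: the generators $Q_{\alpha',\beta'}$ cut out $G(n,E)$, the $\Pi_{\alpha_{rs}}$ cut out $\ker f$ (so together with the $Q$'s they cut out $L(n,E)$ by \eqref{LGint}), the $g_\alpha$'s enforce $\mathbb{F}_q$-rationality, and the $h_1,\ldots,h_r$ impose membership in $H_r$. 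Hence $V(I_r)$ is precisely the $\mathbb{F}_q$-rational points of $L(n,E)\cap H_r$ in the convention of the paper, giving the claimed equality.

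The only real subtlety (and likely the main bookkeeping obstacle) is keeping the projective/affine accounting consistent, since $L(n,E)(\mathbb{F}_q)$ as introduced in \eqref{rationPoint1} is naturally a projective count while $\dim_{\mathbb{F}_q}\mathbb{F}_q[x_\alpha]/I_r$ counts affine solutions. One handles this by adopting the same convention as in the definition of $L(n,E)(\mathbb{F}_q)$ via the zero set $Z\langle Q_{\alpha',\beta'},\Pi_{\alpha_{rs}},x_\alpha^q-x_\alpha\rangle$ used throughout Section~\ref{PreTer}, so that both sides are measured in the same ambient set and the identification of step four is tautological.
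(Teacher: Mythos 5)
Your proposal is correct and follows essentially the same route as the paper: establish that $I_r$ is zero-dimensional, apply Seidenberg's lemma via $\gcd(g_\alpha,g_\alpha')=1$ to get radicality, and then invoke the standard equality between the number of solutions and $\dim_{\mathbb{F}_q}{\mathbb F}[x_\alpha]_{\alpha\in I(n,2n)}/I_r$ for zero-dimensional radical ideals over a perfect field. In fact you spell out the final counting step and the identification of $V(I_r)$ with $L(n,E)({\mathbb F}_q)\cap H_r$ more explicitly than the paper's own (very terse) proof does.
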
 
\begin{proof}
Given the $|L(n, E)({\mathbb F}_q)\cap H_r|\leq |L(n, E)({\mathbb F}_q)|$ then the ideal $I_r$ is zero dimensional, $mcd (g_{\alpha}, g_{\alpha}^{\prime})=1$, thus by Seindeber's lemma we have that the ideal $I_r$ is radical.
\end{proof}
%%%%%
%%
From proposition 3.9 of \cite{bib0.0001}, we have that the Lagrangian-Grassmannian manifold $L(n,E)=G(n,E)\cap{\mathbb P}(\ker f)$ is osculating well-behaved and the tangent space
\begin{equation}\label{tangSpace}
T(L(n,E))= T(G(n,E))\cap {\mathbb P}(ker f)
\end{equation}
\begin{cor}
The tangent space for the Lagrangian-Grassmannian manifold is given by $$T(L(n,E))=Z\big\langle \frac{\partial Q_{\alpha^{'}, \beta^{'}}}{\partial x_{\alpha}}: \alpha \in I(n, 2n)\big \rangle\cap Z\big\langle \Pi_{\alpha_{rs}} : \alpha_{rs}\in I(n-2, 2n)\big\rangle$$
\end{cor}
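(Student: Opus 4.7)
The plan is to read the claim as a direct unpacking of the identity \ref{tangSpace}, namely $T(L(n,E))=T(G(n,E))\cap {\mathbb P}(\ker f)$, by writing each of the two factors on the right as the zero locus of the relevant set of linear forms on ${\mathbb P}(\wedge^n E)$.

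First I would recall that the Grassmannian $G(n,E)\subseteq {\mathbb P}(\wedge^n E)$ is cut out scheme-theoretically by the quadratic Plücker relations $Q_{\alpha',\beta'}$ with $\alpha'\in I(n-1,2n)$ and $\beta'\in I(n+1,2n)$, as recalled in \eqref{eq1.1}. For a projective variety defined by homogeneous polynomials, the embedded projective tangent space at a point $p$ is the zero locus of the linear forms obtained by evaluating the partial derivatives of the defining polynomials at $p$, i.e.\ the projectivized kernel of the Jacobian. Therefore
\[
T(G(n,E))=Z\Big\langle \frac{\partial Q_{\alpha',\beta'}}{\partial x_\alpha}\ :\ \alpha\in I(n,2n)\Big\rangle,
\]
where the brackets denote the linear span (as linear forms on $\wedge^n E$) of all such partial derivatives, ranging over $\alpha'\in I(n-1,2n)$ and $\beta'\in I(n+1,2n)$.

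Next I would handle the factor ${\mathbb P}(\ker f)$. Since $\ker f$ is already a linear subspace of $\wedge^n E$, its projectivization coincides with its own projective tangent space at every point. By the characterization of $\ker f$ recalled right before \eqref{FuncLinPI} (from \cite[Proposition~6]{bib2}), we have
\[
{\mathbb P}(\ker f)=Z\big\langle \Pi_{\alpha_{rs}}\ :\ \alpha_{rs}\in I(n-2,2n)\big\rangle,
\]
and by Theorem~\ref{UniP22} these $\Pi_{\alpha_{rs}}$ are, up to linear combination, all the linear functionals vanishing on $L(n,E)$, so no further linear relations need to be added.

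Finally, combining the two descriptions inside \eqref{tangSpace} and using that the intersection of two zero loci of linear forms is the zero locus of their union, I obtain
\[
T(L(n,E))=Z\Big\langle \tfrac{\partial Q_{\alpha',\beta'}}{\partial x_\alpha}\ :\ \alpha\in I(n,2n)\Big\rangle\cap Z\big\langle \Pi_{\alpha_{rs}}\ :\ \alpha_{rs}\in I(n-2,2n)\big\rangle,
\]
which is exactly the stated equality. The only delicate point, and hence the main obstacle, is justifying that the linear section behaves well enough that the tangent space really is the intersection of the tangent spaces, rather than something strictly larger coming from embedded components or non-reducedness; this is precisely what the osculating-well-behaved property from \cite[Proposition 3.9]{bib0.0001} referenced just before the corollary provides, so the corollary reduces to an invocation of that result together with the two computations above.
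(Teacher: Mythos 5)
Your proposal is correct and follows essentially the same route as the paper: the paper's proof is simply that the identity follows directly from \ref{tangSpace} (the osculating-well-behaved property cited from \cite[Proposition 3.9]{bib0.0001}), and you merely spell out the identification of $T(G(n,E))$ with the zero locus of the partials of the Pl\"ucker quadrics and of ${\mathbb P}(\ker f)$ with $Z\langle \Pi_{\alpha_{rs}}\rangle$, which is the intended reading.
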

\begin{proof}
It follows directly from \ref{tangSpace}.
\end{proof}
%%%
%%
%
%%%%%%%%%%%%%%%%%%%%%%%%%%%%%%

%%%%%%%%%%%%%%%%%%%%%

\end{document}